\theoremstyle{plain} 
\newtheorem{thm}{Theorem}[section] 
\newtheorem{corollario}[thm]{Corollary} 
\newtheorem{lem}[thm]{Lemma} 
\newtheorem{prop}[thm]{Proposition} 
\theoremstyle{definition}
\newtheorem{defn}[thm]{Definition} 
\newtheorem{es}{Example}[thm]
\newtheorem{oss}[thm]{Remark} 
\theoremstyle{remark}
\newcommand{\M}{\mathcal{M}}
\newcommand{\numberset}{\mathbb}
\newcommand{\N}{\numberset{N}}
\newcommand{\F}{\numberset{F}}
\newcommand{\R}{\numberset{R}}
\newcommand{\C}{\numberset{C}}
\newcommand{\Z}{\numberset{Z}}
\newcommand{\Q}{\numberset{Q}}
\DeclarePairedDelimiter{\abs}{\lvert}{\rvert}
\newsavebox{\pullback}
\sbox\pullback{%
	\begin{tikzpicture}%
		\draw (0,0) -- (1ex,0ex);%
		\draw (1ex,0ex) -- (1ex,1ex);%
\end{tikzpicture}}
\newsavebox{\pushout}
\sbox\pushout{%
	\begin{tikzpicture}%
		\draw (0,0) -- (0ex,1ex);%
		\draw (0ex,1ex) -- (1ex,1ex);%
\end{tikzpicture}}
\newsavebox{\hpushout}
\sbox\hpushout{%
	\begin{tikzpicture}%
		\draw (0ex,2ex) -- (0ex,3ex);%
		\draw (0ex,3ex) -- (1ex,3ex);%
		\draw (-2ex,5ex) node [anchor=north west][inner sep=0.75pt]  {$h$};
		
\end{tikzpicture}}
\title{Homology operations for gravity algebras}
\date {}
\author{Tommaso Rossi\footnote{ tommaso.rossi118@gmail.com. This work was funded by the PhD program of the University of Roma Tor Vergata and by the MIUR Excellence Department Project awarded to the Department of Mathematics,
		University of Roma Tor Vergata, CUP E83C18000100006.} \\
		\footnotesize Dipartimento di Matematica, Università di Roma Tor Vergata}
\begin{document}
	
	\maketitle
	
	\begin{abstract}
		Let $\M_{0,n+1}$ be the moduli space of genus zero Riemann surfaces with $n+1$ marked points. In this paper we compute $H_*^{\Sigma_n}(\M_{0,n+1};\F_p)$ and $H_*^{\Sigma_n}(\M_{0,n+1};\F_p(\pm 1))$ for any $n\in\N$ and any prime $p$, where $\F_p(\pm 1)$ denotes the sign representation of the symmetric group $\Sigma_n$. The interest in these homology groups is twofold: on the one hand classes in these equivariant homology groups parametrize homology operations for gravity algebras. On the other hand the homotopy quotient $(\M_{0,n+1})_{\Sigma_n}$ is a model for the classifying space for $B_n/Z(B_n)$, the quotient of the braid group $B_n$ by its center.
	\end{abstract}

	\tableofcontents
	
	\section{Introduction}
	
	The Gravity operad $Grav$ was introduced by Getzler in \cite{Getzler94} and \cite{Getzler} as a sub-operad of $H_*(\mathcal{D}_2)$, where $\mathcal{D}_2$ is the little two disk operad. The space of arity $n$ operations is given by $sH_*(\M_{0,n+1};\Z)$, the (shifted) homology of the moduli space of genus zero marked curves. There are also chain model versions of the Gravity operad, described in the paper \cite{Westerland} by Westerland and in \cite{Getzler-Kapranov} by Getzler-Kapranov. A comparison between these definitions has been written by Dupont and Horel in \cite{Dupont-Horel}. Remarkable examples of algebras over this operad are:
	\begin{itemize}
		\item $H_*^{S^1}(X)$, where $X$ is an algebra over the framed little two disk operad $f\mathcal{D}_2$ (see \cite{Westerland}).
		\item If $M$ is a closed oriented manifold, the string topology operations on $s^{1-d}H_*^{S^1}(LM)$ assemble to a gravity algebra structure (see \cite{Chas-Sullivan} and \cite{Westerland}).
		\item If $A$ is a Frobenius algebra, then $HC^*(A)$ is a gravity algebra. Further examples along these lines can be found in \cite{Ward}.
	\end{itemize}
	In the theory of $H_*(\mathcal{D}_2)$-algebras a key role is played by the Dyer-Lashof operations, which correspond to classes in  $H_*^{\Sigma_n}(\mathcal{D}_2(n);\mathbb{F}_p)$ and $H_*^{\Sigma_n}(\mathcal{D}_2(n);\mathbb{F}_p(\pm 1))$ (here $\mathbb{F}_p(\pm 1)$ denotes the sign representation of the symmetric group $\Sigma_n$). The knowledge of these operations is crucial to describe, for example, the homology of the braid groups $B_n$ and of  $\Omega^2\Sigma^2X$, as explained in the remarkable work of F. Cohen \cite{Cohen}. Similarly, classes in $H_*^{\Sigma_n}(\mathcal{M}_{0,n+1};\mathbb{F}_p)$ and $H_*^{\Sigma_n}(\mathcal{M}_{0,n+1};\mathbb{F}_p(\pm 1))$ give rise to homology operations for Gravity algebras. More precisely: given a gravity algebra $(A,d_A)\in Ch(\F_p)$ and a class $Q\in H_*^{\Sigma_n}(\mathcal{M}_{0,n+1};\F_p)$ (resp. $Q\in H_*^{\Sigma_n}(\mathcal{M}_{0,n+1};\F_p(\pm 1)$) we can construct an operation
	\[
	Q:H_*(A)\to H_{n*+\abs{Q}+1}(A)
	\]
	which acts on even (resp. odd) degree classes. Another motivation one might have to study these equivariant homology groups came from group homology: indeed the homotopy quotient $(\M_{0,n+1})_{\Sigma_n}$ is a model for the classifying space of $B_n/Z(B_n)$, the quotient of the braid group $B_n$ by its center. Therefore we have
	\begin{align*}
	&H_*^{\Sigma_n}(\mathcal{M}_{0,n+1};\mathbb{F}_p)=H_*(B_n/Z(B_n);\F_p)\\  &H_*^{\Sigma_n}(\mathcal{M}_{0,n+1};\mathbb{F}_p(\pm 1))=H_*(B_n/Z(B_n);\F_p(\pm 1))
	\end{align*}
	In this paper we compute these homology groups for any $n\in\mathbb{N}$ and any prime number $p$. In the case of the trivial representation the main result is the following (for a more precise statement see Theorem \ref{thm: calcolo dell'omologia equivariante degli spazi di configurazione quando p divide n} and Theorem \ref{thm:calcolo omologia equivariante quando p non divide n o n-1}):
	\begin{thm}\label{thm: teorema 1 intro}
	Let $n\in \N$ and $p$ a prime number. Then:
	\begin{itemize}
		\item If $n=0,1$ mod $p$ we have $H_*(B_n/Z(B_n);\F_p)\cong H_*(B_n;\F_p)\otimes \F_p[c]$, where $c$ is a variable of degree two.
		\item Otherwise $H_*(B_n/Z(B_n);\F_p)$ is described explicitly as a quotient of $H_*(B_n;\F_p)$.
	\end{itemize}
\end{thm}
To state the result in the case of the sign representation we need some preliminary notation. 	Let $p$ be an odd prime. $H_*(B_n/Z(B_n);\F_p(\pm 1))$ can be identified (up to a shift) with a subspace of 
\[
\Lambda(x,Q^1(x),Q^2(x),\dots)\otimes \F_p[\beta Q^1(x),\beta Q^2(x),\dots]\otimes \F_p[c]
\]
where $x$ is a variable of degree one, $Q^i(x)$ has degree $2p^i-1$, $\beta Q^i(x)$ has degree $2p^i-2$ and $c$ has degree two. In order to describe an additive basis of  $H_*(B_n/Z(B_n);\F_p(\pm 1))$ we need a weight grading $w$, which is generated by the following rules:
\begin{itemize}
	\item $w(1)=0$
	\item $w(x)=1$
	\item $w(ab)=w(a)+w(b)$
	\item $w(Q^i(x))=w(\beta Q^i(x))=p^i$
\end{itemize} 
 Now we are ready to describe a basis for $H_*(B_n/Z(B_n);\F_p(\pm 1))$ (Theorem \ref{thm: identificazione esplicita dell'omologia twistata}):
\begin{thm}\label{thm: teorema 2 intro}
	Let $p$ be an odd prime, $n\geq 2$. We will denote by $s$ the degree shift by $1$. Then $s^{n}H_*(B_n/Z(B_n);\F_p(\pm 1))$ is generated by elements of the form
	\[
	a\otimes c^i \qquad i\geq 0
	\] 
	where $a\in \Lambda(x,Q^1(x),Q^2(x),\dots)\otimes \F_p[\beta Q^1(x),\beta Q^2(x),\dots]$ is a monomial of weight $n$. 
\end{thm}
We end this introduction by giving an overview of the techniques we used to prove these results.
	\paragraph{Strategy of the proof (Theorem \ref{thm: teorema 1 intro}):}
	The key observation to prove Theroem \ref{thm: teorema 1 intro} is the following:  the homotopy quotients  $(\mathcal{M}_{0,n+1})_{\Sigma_n}$ and $C_n(\C)_{S^1}$ (where $C_n(\C)$ is the unordered configuration space) are both models for the classifying space of $B_n/Z(B_n)$. This allows us to do the computation of $H_*^{\Sigma_n}(\M_{0,n+1};\F_p)$ by looking at the Serre spectral sequence associated to the fibration
	\begin{equation}\label{fibration 1}
		\begin{tikzcd}
			& C_n(\C)\arrow[r,hook] & C_n(\C)_{S^1} \arrow[r] & BS^1   
		\end{tikzcd}
	\end{equation}
	which is much simpler than
	\[
	\begin{tikzcd}
		& \M_{0,n+1} \arrow[r,hook] & (\M_{0,n+1})_{\Sigma_n} \arrow[r] & B\Sigma_n   
	\end{tikzcd}
	\]
	because in the first case there is not any monodromy. Moreover, in the (homological) Serre spectral sequence associated to (\ref{fibration 1}) the homology of the fiber is well known, and the differential of the second page is given by the BV-operator $\Delta$ (Proposition \ref{prop: differenziale dato da delta}). After a careful analysis of this spectral sequence we get the result (Section \ref{sec: operations for even classes}).

\paragraph{Strategy of the proof (Theorem \ref{thm: teorema 2 intro}):}
	The computation of $H_*^{\Sigma_n}(B_n/Z(B_n);\mathbb{F}_p(\pm 1))$ involves different methods, based on the work of Cohen, B\"odigheimer and Peim \cite{Cohen-Bodigheimer}. To explain the strategy we need some notation: let $\lambda:E\to B$ be a fiber bundle with fiber $F$ and consider the (ordered) fiberwise configuration space
	\[
	E(\lambda,n)\coloneqq\{(e_1,\dots,e_n)\in E^n\mid e_i\neq e_j \text{ and } \lambda(e_i)=\lambda(e_j) \text{ if } i\neq j\}
	\]
	Now let $X$ be a connected CW-complex with basepoint $\ast$. The space of \emph{fiberwise configurations with label in} $X$ is defined as
	\begin{equation*}
		E(\lambda;X)\coloneqq \bigsqcup_{n=0}^{\infty}E(\lambda,n)\times_{\Sigma_n}X^n/\sim 
	\end{equation*}
	where $\sim$ is the equivalence relation determined by 
	\[
	(e_1,\dots,e_n)\times(x_1,\dots,x_n)\sim (e_1,\dots,\hat{e}_i,\dots,e_n)\times(x_1,\dots,\hat{x}_i,\dots,x_n)
	\]
	when $x_i=\ast$. Now the idea is the following: if $\C\hookrightarrow E\xrightarrow{\lambda} \C P^{\infty}$ is the tautological line bundle, then $E(\lambda;n)/\Sigma_n$ is a model for the classifying space of $B_n/Z(B_n)$. In this situation one can prove that $H_*(B_n/Z(B_n);\F_p(\pm 1))$ can be described as a certain subspace of $H_*(E(\lambda;S^{2q+1});\F_p)$ (Proposition \ref{prop: interpretazione come fiberwise configurations}). Therefore it suffices to compute  $H_*(E(\lambda;S^{2q+1});\F_p)$, and this is done by looking at the fibration
	\[
	C(\C;S^{2q+1})\hookrightarrow E(\lambda;S^{2q+1})\to \C P^{\infty}
	\]
	where $C(\C;S^{2q+1})$ is the configuration space of points in the plane with labels in $S^{2q+1}$. The result is the following (Theorem \ref{thm:collasso sequenza spettrale per i labelled fiberwise config. spaces}):
	\begin{thm}
		For any $q\in \N$ and $p$ a prime number 
		\[
		H_*(E(\lambda;S^{2q+1});\F_p)\cong H_*(C(\C;S^{2q+1});\F_p)\otimes H_*(\C P^{\infty};\F_p)
		\]
	\end{thm}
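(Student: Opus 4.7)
The plan is to show that the Leray--Serre spectral sequence associated to the fibration
\[
C(\C; S^{2q+1}) \hookrightarrow E(\lambda; S^{2q+1}) \to \C P^\infty
\]
collapses at $E^2$. Since $\C P^\infty$ is simply connected, the local coefficient system is trivial, so $E^2_{s,t} = H_s(\C P^\infty;\F_p) \otimes H_t(C(\C;S^{2q+1});\F_p)$; moreover, $H_*(\C P^\infty;\F_p)$ is concentrated in even degrees, which kills all $d_r$ with $r$ odd for parity reasons. The work therefore reduces to handling the even differentials.

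The central step is to prove $d_2 = 0$. By the same argument used to establish Proposition~\ref{prop: differenziale dato da delta}, this $d_2$ is induced (up to the standard shift) by the BV operator $\Delta$ on the fiber homology coming from the $S^1$-rotation action on $\C$. May's approximation theorem gives $C(\C;S^{2q+1}) \simeq \Omega^2 S^{2q+3}$, and the question becomes whether $\Delta$ vanishes on $H_*(\Omega^2 S^{2q+3};\F_p)$. The bottom class $\iota \in H_{2q+1}$ is produced by Hurewicz; using F.~Cohen's explicit Dyer--Lashof description of $H_*(\Omega^2 S^{2q+3};\F_p)$, the bidegree in which $\Delta(\iota)$ could land is trivial (or trivially equal to a decomposable that a short $\Delta^2 = 0$ and Browder-bracket computation forces to zero). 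The BV identity
\[
\Delta(xy) = \Delta(x)\,y + (-1)^{|x|} x\,\Delta(y) + (-1)^{|x|} [x,y]
\]
then propagates this vanishing from $\iota$ to all of $H_*(\Omega^2 S^{2q+3};\F_p)$ once one checks that the relevant Browder brackets vanish, which follows from the stable Snaith decomposition of $\Omega^2 \Sigma^2 S^{2q+1}$.

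Once $d_2 = 0$ one has $E^3 = E^2$, and it remains to rule out the higher even differentials $d_{2k}$. My plan here is to exhibit a subspace of $H_*(E(\lambda;S^{2q+1});\F_p)$ that survives to $E^\infty$, matching the predicted dimension of the tensor product. Two ingredients suffice: the empty-configuration stratum provides a canonical section $\C P^\infty \hookrightarrow E(\lambda;S^{2q+1})$, embedding $H_*(\C P^\infty;\F_p)$; and the fiberwise Snaith stable splitting
\[
\Sigma^\infty_+ E(\lambda;S^{2q+1}) \simeq \bigvee_{n \ge 0} \Sigma^\infty \bigl(E(\lambda,n)_+ \wedge_{\Sigma_n} (S^{2q+1})^{\wedge n}\bigr)
\]
lifts fiber classes arity by arity. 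Since the $E^2$ page already gives the upper bound, these two constructions must exhaust it and all remaining differentials vanish. The main obstacle in this plan is the thorough control of $\Delta$ on $H_*(\Omega^2 S^{2q+3};\F_p)$: its vanishing on $\iota$ is immediate, but its propagation via the BV identity is where one leans most heavily on the detailed analysis of labeled configuration spaces with sphere labels from \cite{Cohen-Bodigheimer}.
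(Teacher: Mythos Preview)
Your strategy diverges substantially from the paper's, and while the outline is plausible, there are two genuine gaps.

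\textbf{What the paper does.} The paper does not analyse the differentials at all. Instead it shows in one stroke that the fiber inclusion $i:C(\C;S^{2q+1})\hookrightarrow E(\lambda;S^{2q+1})$ is injective on $H_*(-;\F_p)$, which forces the entire Serre spectral sequence to collapse at $E^2$. The injectivity is obtained by producing a map
\[
\psi:E(\lambda;S^{2q+1})\longrightarrow C(\C^{\infty};S^{2q+1}),\qquad [((v_1,l),\dots,(v_n,l));(p_1,\dots,p_n)]\mapsto[(v_1,\dots,v_n);(p_1,\dots,p_n)],
\]
so that $\psi\circ i$ is the usual inclusion $j:C(\C;S^{2q+1})\to C(\C^{\infty};S^{2q+1})$. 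By Cohen's computations $j_*$ is a monomorphism on mod~$p$ homology, hence so is $i_*$. This single factorisation handles $d_2,d_4,d_6,\dots$ simultaneously.

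\textbf{Gap 1: bracket vanishing.} Your identification of $d_2$ with the BV operator $\Delta$ is correct, and the claim $\Delta=0$ on $H_*(\Omega^2S^{2q+3};\F_p)$ is true. But the justification you give---``the relevant Browder brackets vanish, which follows from the stable Snaith decomposition''---does not work: the Snaith splitting is a purely additive statement about $\Sigma^\infty\Omega^2\Sigma^2X$ and says nothing about the multiplicative or bracket structure. To make your argument honest you would need to check directly (using $[\iota,\iota]=0$ for $\iota$ of odd degree when $p$ is odd, the formula $[x,Qy]=ad^p(y)(x)$, and $\Delta\beta=\pm\beta\Delta$) that $\Delta$ kills each generator $Q^i\iota$, $\beta Q^i\iota$ and that all brackets among them vanish. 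This is doable but is exactly the kind of case analysis carried out in Section~\ref{sec: proprietà della delta}; it is not a consequence of Snaith.

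\textbf{Gap 2: higher differentials.} Even granting $d_2=0$, your proposed treatment of $d_4,d_6,\dots$ is circular. You want to bound $\dim_{\F_p}H_*(E(\lambda;S^{2q+1}))$ from below by the dimension of $E^2$. The section coming from the empty configuration only accounts for the row $t=0$. For the rest you invoke the fiberwise Snaith splitting
\[
\tilde H_*(E(\lambda;S^{2q+1});\F_p)\cong\bigoplus_{k\ge1}\tilde H_*(D_k(\lambda;S^{2q+1});\F_p),
\]
but the summands $H_*(D_k(\lambda;S^{2q+1});\F_p)$ are precisely the unknowns you are trying to compute (cf.\ Proposition~\ref{prop: interpretazione come fiberwise configurations}); the splitting gives no a priori lower bound on their size. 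Without an independent input---such as the injectivity of $i_*$ that the paper supplies---you cannot rule out the higher even differentials.

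In short: the paper's factorisation through $C(\C^{\infty};S^{2q+1})$ is the missing idea. It replaces both your $\Delta$-computation and your dimension-count in a single step, and it is what makes the proof two lines rather than a delicate bookkeeping exercise.
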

	In Paragraph \ref{sec: examples} we finally describe an additive basis of $H_*^{\Sigma_n}(B_n/Z(B_n);\mathbb{F}_p(\pm 1))$ by means of basis of $ H_*(C(\C;S^{2q+1});\F_p)$ and of $ H_*(\C P^{\infty};\F_p)$.
	\paragraph{Plan for the paper:} Section \ref{sec: gravity operad} is a recollection of well known facts about the gravity operad. In Section \ref{sec: homology operations for gravity algebras} we explain the connection between classes in $H_*^{\Sigma_n}(\M_{0,n+1};\F_p)$ (or in $H_*^{\Sigma_n}(\M_{0,n+1};\F_p(\pm 1))$) and homology operations for gravity algebras. Section \ref{sec: operations for even classes} and Section \ref{sec: operations for odd degree classes} contain the computation of these homology groups.
	\paragraph{Future directions:} the Dyer-Lashof operations satisfy a bunch of relations (e.g. Cartan relations, Adem relations). It would be interesting to understand what kind of relations we get in the context of gravity algebras. This would imply a complete knowledge of the homology of the free gravity algebras. Moreover it would be interesting to see if the computation of $H_*^{\Sigma_n}(\M_{0,n+1};\F_p)$ can be used to get some information about $H_*^{\Sigma_n}(\overline{\M}_{0,n+1};\F_p)$ by mean of the morphism $H_*^{\Sigma_n}(\M_{0,n+1};\F_p)\to H_*^{\Sigma_n}(\overline{\M}_{0,n+1};\F_p)$ induced by the inclusion.
	\paragraph{Acknowledgments:} This project is part of the author's PhD thesis, funded by the PhD program of the University of Roma Tor Vergata and by the MIUR Excellence Project MatMod@TOV awarded to the
	Department of Mathematics, University of Rome Tor Vergata,  CUP E83C18000100006. I can not omit the invaluable help that my advisor Paolo Salvatore gave to me in these years of PhD. I am grateful to him for both proposing me this topic and for sharing with me many good ideas about it. Without his guidance and support this project would have not be possible. I also thank Lorenzo Guerra for many enlightening discussions about the cohomology of groups and the anonymous referee for pushing me to make the computation of $H_*^{\Sigma_n}(B_n/Z(B_n);\F_p(\pm 1))$ more explicit. 
	
	\section{The Gravity operad}\label{sec: gravity operad}
	Let $X$ be a $S^1$-space. The action $\theta:S^1\times X\to X$ induces an operator $\Delta:H_*(X;\Z)\to H_{*+1}(X;\Z)$ by the composition
	\[
	\begin{tikzcd}
		& H_*(X)\arrow[r] &H_*(S^1)\otimes H_*(X)\arrow[r,"\times"]&H_*(S^1\times X)\arrow[r,"\theta_*"]& H_*(X)
	\end{tikzcd}
	\]
	where the first map take a class $x\in H_*(X)$ and send it to $[S^1]\otimes x$. We will call $\Delta$ the \textbf{BV-operator} (see \cite{Getzler93}). In what follows all the homology groups are taken with integer coefficients, unless otherwise stated. To ease the notation we sometimes write $H_*(X)$ instead of $H_*(X;\Z)$.
	\begin{defn}[Getzler, \cite{Getzler93}]
		Let $\mathcal{D}_2$ be the little two disk operad. $S^1$ acts on $\mathcal{D}_2(n)$ by rotations, so we get a BV-operator $\Delta:H_*(\mathcal{D}_2(n))\to H_{*+1}(\mathcal{D}_2(n))$. This map is compatible with the operadic structure and induces a morphism of operads $\Delta:H_*(\mathcal{D}_2)\to H_{*+1}(\mathcal{D}_2)$. The kernel of this map is a sub-operad of $H_*(\mathcal{D}_2)$, called the \textbf{Gravity operad}. We will denote it by $Grav$.
	\end{defn}
	\begin{oss}
		$S^1$ acts freely on $\mathcal{D}_2(n)$ so we can identify the kernel of $\Delta:H_*(\mathcal{D}_2(n))\to H_{*+1}(\mathcal{D}_2(n))$ with $sH_*^{S^1}(\mathcal{D}_2(n))\cong s H_*(\M_{0,n+1})$, where this last isomorphism holds because $\M_{0,n+1}$ and $\mathcal{D}_2(n)/S^1$ are homotopy equivalent. To sum up we have the following identification:
		\[
		Grav(n)= sH_*(\M_{0,n+1})
		\]
	\end{oss}
	\begin{oss}
		The action of $\Sigma_{n+1}$ on $\M_{0,n+1}$ by relabelling the points induces an action in homology, making $Grav$ a cyclic operad.
	\end{oss}
	Unlike many familiar operads, the Gravity operad is not generated by a finite number of operations. However, it has a nice presentation with infinitely many generators:
	\begin{thm}[Getzler \cite{Getzler94}]
		As an operad $Grav$ is generated by (graded) symmetric operations of degree one
		\[
		\{a_1,\dots,a_n\}\in Grav(n) \quad \text{ for } n\geq 2
		\]
		Geometrically, $\{a_1,\dots,a_n\}$ corresponds to the generator of $H_0(\mathcal{M}_{0,n+1},\Z)$. These operations (called brackets) satisfy the so called generalized Jacobi relations: for any $k\geq 2$ and $l\in \N$
		\begin{equation}\label{eq: generalized jacobi relations}
			\sum_{1\leq i<j\leq k}(-1)^{\epsilon(i,j)} \{\{a_i,a_j\},a_1,\dots,\hat{a}_i,\dots,\hat{a}_j,\dots,a_k,b_1,\dots,b_l\}=
			\{\{a_1,\dots,a_k\},b_1,\dots,b_l\} 
		\end{equation}
		where the right hand term is interpreted as zero if $l=0$ and $\epsilon(i,j)=(\abs{a_1}+\dots+ \abs{a_{i-1}})\abs{a_i}+(\abs{a_1}+\dots+\abs{a_{j-1}})\abs{a_j}+ \abs{a_i}\abs{a_j}$.
	\end{thm}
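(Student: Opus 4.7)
The plan is to exploit the identification recalled in the preceding remark, namely
\[
Grav(n)\;=\;\ker\!\bigl(\Delta\colon H_*(\mathcal{D}_2(n))\to H_{*+1}(\mathcal{D}_2(n))\bigr),
\]
together with the classical fact that $H_*(\mathcal{D}_2)$ is the $BV$-operad, i.e.\ it is freely generated as an operad by a graded commutative binary product $\mu$ in degree $0$ and a degree-$1$ square-zero unary operator $\Delta$, subject to the $BV$ relations. I would first define the brackets by
\[
\{a_1,\dots,a_n\}\;:=\;\Delta(a_1\cdot a_2\cdots a_n),
\]
where the product on the right is the iterated $\mu$-product inside $H_*(\mathcal{D}_2(n))$. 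The class $\{a_1,\dots,a_n\}$ lies in $\ker(\Delta)$ by $\Delta^2=0$, is graded symmetric by graded commutativity of $\mu$, sits in degree $1$ of $sH_*(\M_{0,n+1})$, and is nontrivial (it represents, up to sign, the fundamental class of an $S^1$-fibre); by connectedness of $\M_{0,n+1}$ it therefore agrees with the generator of $H_0(\M_{0,n+1};\Z)$ up to a nonzero scalar.

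To verify the generalized Jacobi relations \eqref{eq: generalized jacobi relations}, I would expand both sides using the definition above together with the $BV$ identity that expresses $\Delta$ of a product as a sum of lower brackets times products. The left-hand side becomes an iterated $\Delta$ applied to a length-$(k+l)$ product; applying the $BV$ identity and then invoking $\Delta^2=0$ eliminates every term except those corresponding to the right-hand side, with the Koszul signs $\epsilon(i,j)$ emerging automatically from the sign convention on shifted degrees. The boundary case $l=0$ yields $\Delta\{a_1,\dots,a_k\}=\Delta^2(a_1\cdots a_k)=0$, consistent with the stated convention.

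To show generation, I combine the $BV$-operad presentation of $H_*(\mathcal{D}_2)$ with the Gysin short exact sequence
\[
0\longrightarrow sH_{*-1}(\M_{0,n+1})\longrightarrow H_*(\mathcal{D}_2(n))\longrightarrow H_*(\M_{0,n+1})\longrightarrow 0,
\]
which splits for the free $S^1$-action and identifies $\ker(\Delta)$ with $sH_*(\M_{0,n+1})$. By induction on arity and homological degree, every polynomial in $\mu$ and $\Delta$ lying in $\ker(\Delta)$ can be rewritten, modulo operadic composition of lower-arity brackets, as a single $\Delta$ applied to a product of such compositions, hence as one of the brackets $\{-,\dots,-\}$.

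The principal obstacle is showing that \eqref{eq: generalized jacobi relations} captures \emph{all} the relations. Let $\mathcal{F}$ denote the free operad on graded-symmetric degree-$1$ generators in every arity $\geq 2$, and $\mathcal{J}\subset\mathcal{F}$ the operadic ideal generated by \eqref{eq: generalized jacobi relations}. The previous steps give a surjection $\mathcal{F}/\mathcal{J}\twoheadrightarrow Grav$, and one must show it is an isomorphism. The cleanest route is via Koszul duality: $Grav$ is Koszul dual to the hypercommutative operad $Hycomm(n)=H_*(\overline{\M}_{0,n+1};\Q)$, and matching the resulting Poincaré series of $\mathcal{F}/\mathcal{J}(n)$ with $\sum_i\dim_\Q H_i(\M_{0,n+1};\Q)$ forces equality in each homological degree. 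A more elementary alternative is to construct an explicit combinatorial basis of $\mathcal{F}/\mathcal{J}(n)$, indexed e.g.\ by suitable rooted trees, and check its image in $Grav$ is linearly independent.
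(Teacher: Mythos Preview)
The paper does not prove this theorem: it is stated with attribution to Getzler \cite{Getzler94} and no argument is given, so there is nothing in the paper to compare your proposal against. That said, your outline is close in spirit to Getzler's original approach, and I will comment on it directly.

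There is one genuine error. You assert that $H_*(\mathcal{D}_2)$ is the $BV$-operad, generated by a product $\mu$ and a unary degree-$1$ operator $\Delta$. This is false: $\mathcal{D}_2(1)$ is contractible, so $H_*(\mathcal{D}_2(1))$ is concentrated in degree $0$ and contains no $\Delta$. The operad $H_*(\mathcal{D}_2)$ is the \emph{Gerstenhaber} operad, generated by $\mu$ and a binary bracket $[-,-]$ of degree $1$; the $BV$-operad is $H_*(f\mathcal{D}_2)$, the homology of the \emph{framed} little disks. In the present paper the operator $\Delta$ is not an operadic element at all but an external endomorphism of each $H_*(\mathcal{D}_2(n))$ coming from the $S^1$-action by rotation (see the definition at the start of Section~\ref{sec: gravity operad}). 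Your formula $\{a_1,\dots,a_n\}:=\Delta(a_1\cdots a_n)$ still makes sense and does land in $\ker(\Delta)$, but the generation step cannot proceed by ``rewriting polynomials in $\mu$ and $\Delta$'': elements of $H_*(\mathcal{D}_2(n))$ are operadic compositions of $\mu$ and $[-,-]$, and one must instead use the identity $\Delta(xy)=\Delta(x)y+(-1)^{x}x\Delta(y)+(-1)^{x}[x,y]$ (property~(4) in Section~\ref{sec: proprietà della delta}) to relate the external $\Delta$ to the internal bracket. Once this is corrected, the inductive argument you sketch goes through essentially as in Getzler's paper.

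A smaller point: your completeness-of-relations argument via Koszul duality and Poincar\'e series is formulated over $\Q$, whereas the theorem is stated over $\Z$. Getzler's argument handles the integral case by exhibiting an explicit basis of $Grav(n)$ and matching it against the known free $\Z$-module structure of $H_*(\M_{0,n+1};\Z)$; the tree basis you allude to at the end is the right object, but the linear independence check must be done integrally.
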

	\begin{defn}
		A \textbf{Gravity algebra} (in the category of chain complexes) is an algebra over the Gravity operad. To be explicit, it is a chain complex $(A,d_A)$ together with graded symmetric chain maps $\{-,\dots,-\}:A^{\otimes k}\to A$ of degree one such that for $k\geq 3$, $l\geq 0$ and $a_1,\dots,a_k,b_1,\dots,b_l\in A$, Equation \ref{eq: generalized jacobi relations} is satisfied.
	\end{defn}
	Gravity algebras and $BV$-algebras are closely related. The idea is that every time we have a $BV$-algebra structure on the (co)homology of a space/d.g. algebra, then we get a Gravity algebra structure on the $S^1$-equivariant version of our (co)homology theory. The following list of examples should clarify this last sentence:
	\begin{enumerate}
		\item Let $X$ be a $f\mathcal{D}_2$-algebra. Then $H_*(X)$ is a $BV$-algebra and $sH_*^{S^1}(X)$ is a Gravity algebra (Westerland, \cite[Corollary 2.5]{Westerland}). More precisely, in the category of spectra the Gravity operad is defined as the homotopy fixed points $(\mathcal{D}_2)^{hS^1}$ of the little two disk operad $\mathcal{D}_2$. Westerland proved that the norm map $n:(\mathcal{D}_{2})_{bS^1}\to (\mathcal{D}_2)^{hS^1}$ is a weak equivalence of operads, where $(\mathcal{D}_{2})_{bS^1}$ is the so called \emph{transfer operad}. Corollary $2.5$ of \cite{Westerland} tells us that $(\mathcal{D}_{2})_{bS^1}$ acts on $sH_*^{S^1}(X)$ whenever $X$ is $f\mathcal{D}_2$-algebra. Combining this with the weak equivalence $(\mathcal{D}_{2})_{bS^1}\simeq (\mathcal{D}_2)^{hS^1}$ we get that $sH_*^{S^1}(X)$ is an algebra over $H_*((\mathcal{D}_{2})^{hS^1})$, i.e. a Gravity algebra.
			\item Let $M$ be an oriented $d$-dimensional manifold. The homology $H_*(LM)$ of the free loop space on $M$ carries a rich algebraic structure: the loop product of Chas-Sullivan \cite{Chas-Sullivan} endow $s^{-d} H_*(LM)$ with a commutative algebra structure. Moreover the $S^1$ action on $LM$ is compatible with this product, so $s^{-d}H_*(LM)$ is an $BV$-algebra (Cohen-Jones \cite{Cohen-Jones}). As before, if we switch to $S^1$-equivariant homology (and shift the degree appropriately) we get a Gravity algebra structure on $s^{1-d}H^{S^1}_*(LM)$ (Westerland, \cite{Westerland}).
		\item If $A$ is a Frobenius algebra, then the Hochschild cohomology $HH^*(A)$ is a $BV$-algebra. If we switch to the $S^1$-equivariant version of Hochschild cohomology (i.e. the cyclic cohomology) we get a Gravity algebra structure on $HC^*(A)$. See the paper by Ward \cite{Ward} for further details and examples.
		
	\end{enumerate}
	The following table summarizes what we said in this section:
	\[
	\begin{tabularx}{\textwidth}{lXXX}
		\toprule
		& & $BV$-algebra & $Grav$-algebra\\
		\toprule& $X$ $f\mathcal{D}_2$-algebra & $H_*(X)$  & $H_*^{S^1}(X)$\\
		\midrule
		& $M$ closed oriented & $s^{-d}H_*(LM) $ & $s^{1-d}H_*^{S^1}(LM)$\\ 
		\midrule
		& $A$ Frobenius algebra & $HH^*(A)$ & $HC^*(A)$\\
		\bottomrule
	\end{tabularx}
	\]
	
	\section{Homology operations for gravity algebras}\label{sec: homology operations for gravity algebras}
	In this section we show that classes in $H_*^{\Sigma_n}(\mathcal{M}_{0,n+1};\mathbb{F}_p)$ and $H_*^{\Sigma_n}(\mathcal{M}_{0,n+1};\mathbb{F}_p(\pm 1))$ give rise to homology operations for Gravity algebras. 
	\subsection{Equivariant operations}\label{sec: equiavariant operations}
	Let $p$ be a prime and use $\F_p$-coefficients for (co)homology from now on. Fix $grav$ any chain model for the Gravity operad. 
	\begin{oss}
		We can suppose that $\Sigma_n$ acts freely on $grav(n)$ for any $n\in\N$. Indeed it suffices to replace our chain model for the gravity operad with a cofibrant replacement of it. From now on we assume that we are in this situation.
	\end{oss}
	Now let $(A,d_A)$ be a Gravity algebra in the category $Ch(\F_p)$. The structure maps $grav(n)\otimes A^{\otimes n}\to A$ are $\Sigma_n$-equivariant, so they factor through the coinvariants:
	\[
	\begin{tikzcd}
		&grav(n)\otimes A^{\otimes n}\arrow[r] \arrow[d]& A\\
		&grav(n)\otimes_{\Sigma_n} A^{\otimes n}\arrow[ur,"\gamma"]   
	\end{tikzcd}
	\]
	Passing to homology we get
	\[
	\gamma_*:H_*(grav(n)\otimes_{\Sigma_n} A^{\otimes n})\to H_*(A)
	\]
	\begin{oss}\label{oss: identificazione omologia di A e A nella def delle gravity operations}
		Since we are working with coefficients in a field, we can define a quasi-isomorphism $H_*(A)\to A$ by choosing a basis for $H_*(A)$ and assigning to each element of it  a representative cycle in $Z_*(A)\subseteq A$. Therefore we get an isomorphism between $H_*(grav(n)\otimes_{\Sigma_n}H_*(A)^{\otimes n})$ and $H_*(grav(n)\otimes_{\Sigma_n}A^{\otimes n})$. 
	\end{oss}
	\begin{oss}\label{oss: coinvarianti calcolano omologia equivariante}
		Since $\Sigma_n$ acts freely on $grav(n)$, the coinvariants $grav(n)_{\Sigma_n}$ compute (up to a degree shift) the $\Sigma_n$-equivariant homology of $\M_{0,n+1}$. More explicitly,
		\[
		H_*(grav(n)_{\Sigma_n})\cong sH_*^{\Sigma_n}(\M_{0,n+1};\F_p)
		\]
		Similarly, the quotient of $grav(n)$ by the subspace $<x-(-1)^{\sigma}\sigma\cdot x\mid x\in grav(n), \sigma\in \Sigma_n>$ computes $sH_*^{\Sigma_n}(\M_{0,n+1};\F_p(\pm 1))$. So,
		
		\[
		H_*\left(\frac{grav(n)}{<x-(-1)^{\sigma}\sigma\cdot x>}\right)\cong sH_*^{\Sigma_n}(\M_{0,n+1};\F_p(\pm 1))
		\]   
	\end{oss}
	We are now ready to define what is an equivariant homology operation for a gravity algebra:
	\begin{defn}[Equivariant operations for even classes]
		Let $Q\in sH_*^{\Sigma_n}(\M_{0,n+1};\F_p)$ and let $q\in grav(n)$ be an element such that $[q]\in grav(n)_{\Sigma_n}$ is a representative for the class $sQ\in sH_*^{\Sigma_n}(\M_{0,n+1};\F_p)$. Since there is a shift of degree, $\abs{q}=\abs{Q}+1$. If $[a]\in H_*(A)$ is a even degree class, then it is not hard to verify that $q\otimes a^{\otimes n}$ is a cycle in $grav(n)\otimes_{\Sigma_n}A^{\otimes n}$. Then we define
		\[
		Q(a)\coloneqq\gamma_*(q\otimes a^{\otimes n})\in H_*(A)
		\]
		It is not hard to see that $Q(a)$ does not depend neither on the choice of $q$, nor on the choice of a representative cycle for $[a]$. So the definition is well posed. To sum up, any class $Q\in H_*^{\Sigma_n}(\M_{0,n+1};\F_p)$ gives rise to a homology operation (defined only for classes of even degrees)
		\[
		Q(-):H_{2m}(A)\to H_{2mn+\abs{Q}+1}(A)
		\]
	\end{defn}
	
	\begin{defn}[Equivariant operations for odd classes]
		Let $Q\in H_*^{\Sigma_n}(\M_{0,n+1};\F_p(\pm 1))$ and choose $q\in grav(n)$ be an element such that $[q]\in grav(n)/<x-(-1)^{\sigma}\sigma\cdot x\mid x\in grav(n), \sigma\in \Sigma_n>$ is a representative for the class $sQ\in sH_*^{\Sigma_n}(\M_{0,n+1});\F_p(\pm 1))$. Since there is a shift of degree, $\abs{q}=\abs{Q}+1$.  If $[a]\in H_*(A)$ is a odd degree class, then it is not hard to verify that $q\otimes a^{\otimes n}$ is a cycle in $grav(n)\otimes_{\Sigma_n}A^{\otimes n}$. Then we define
		\[
		Q(a)\coloneqq\gamma_*(q\otimes a^{\otimes n})\in H_*(A)
		\]
		It is not hard to see that $Q(a)$ does not depend neither on the the choice of $q$, nor on the choice of a representative cycle for $[a]$. So the definition is well posed. To sum up, any class $Q\in H_*^{\Sigma_n}(\M_{0,n+1};\F_p(\pm 1))$ gives rise to a homology operation (defined only for classes of odd degrees)
		\[
		Q(-):H_{2m+1}(A)\to H_{(2m+1)n+\abs{Q}+1}(A)
		\]
	\end{defn}
	To conclude, if one wants to understand homology operations for gravity algebras, the first step is to compute $H_*(B_n/Z(B_n);\F_p)$ and $H_*(B_n/Z(B_n);\F_p(\pm 1))$. This will be the main achievement of Section \ref{sec: operations for even classes} and Section \ref{sec: operations for odd degree classes}.
	
	\subsection{Homotopy models for $(\M_{0,n+1})_{\Sigma_n}$}\label{sec: quoziente omotopico come spazio classificante}
	
	As explained in the previous section, to understand the equivariant operations on gravity algebras one has to compute $H_*^{\Sigma_n}(\M_{0,n+1};\F_p)$. The first thing one might try to do is to study the Serre spectral sequence associated to the fibration
	\begin{equation}\label{eq: fibrazione difficile}
		\M_{0,n+1}\hookrightarrow (\M_{0,n+1})_{\Sigma_n}\to B\Sigma_n
	\end{equation}
	
	However the action of $\Sigma_n$ on the homology of $\M_{0,n+1}$ is not trivial and this complicates the whole computation. To overcome this problem the key observation is the following:
	\begin{lem}\label{lem:equivalenza omotopica tra quozienti omotopici}
		$(\mathcal{M}_{0,n+1})_{\Sigma_n}$ is homotopy equivalent to $C_n(\C)_{S^1}$.
	\end{lem}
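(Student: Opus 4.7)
The plan is to realize both spaces as Borel constructions for a common space under commuting group actions. Consider the ordered configuration space $F(\C,n)=\{(z_1,\dots,z_n)\in \C^n\mid z_i\neq z_j\text{ for }i\neq j\}$, which carries two commuting actions: the free permutation action of $\Sigma_n$, and the action of the affine group $\mathrm{Aff}(\C)=\C\rtimes\C^*$ by coordinatewise transformations $z\mapsto az+b$. For $n\geq 2$ this affine action is free, since any affine transformation fixing two distinct points is the identity. Fixing the extra marked point of $\M_{0,n+1}$ at $\infty\in \C P^1$ identifies $\M_{0,n+1}$ with $F(\C,n)/\mathrm{Aff}(\C)$, because the stabilizer of $\infty$ in $\mathrm{PSL}_2(\C)$ is precisely $\mathrm{Aff}(\C)$.

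With this setup, I would compute the homotopy quotient $F(\C,n)_{\mathrm{Aff}(\C)\times\Sigma_n}$ in two different orders. Quotienting first by $\Sigma_n$ (freely, so honest quotient equals homotopy quotient) and then homotopically by $\mathrm{Aff}(\C)$ yields $C_n(\C)_{\mathrm{Aff}(\C)}$; quotienting first by $\mathrm{Aff}(\C)$ (also freely) and then homotopically by $\Sigma_n$ yields $(\M_{0,n+1})_{\Sigma_n}$. Both are models for the Borel construction $E(\mathrm{Aff}(\C)\times\Sigma_n)\times_{\mathrm{Aff}(\C)\times\Sigma_n}F(\C,n)$, and are therefore weakly equivalent.

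To finish, I would replace $\mathrm{Aff}(\C)$ by $S^1$. The inclusion $S^1\hookrightarrow\mathrm{Aff}(\C)$ as rotations is a homotopy equivalence of topological groups, since $\mathrm{Aff}(\C)=\C\rtimes\C^*$ deformation retracts onto $\C^*$ (contracting the translation part) and then onto $S^1$. Consequently the coset space $\mathrm{Aff}(\C)/S^1$ is contractible, and the natural map $C_n(\C)_{S^1}\to C_n(\C)_{\mathrm{Aff}(\C)}$, which is a fiber bundle with fiber $\mathrm{Aff}(\C)/S^1$, is a weak equivalence. Composing the three equivalences yields the desired homotopy equivalence.

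The main obstacle is essentially bookkeeping rather than any deep difficulty: one must correctly identify $\M_{0,n+1}$ with $F(\C,n)/\mathrm{Aff}(\C)$, verify that homotopy quotients can be iterated and that free actions may be replaced by ordinary quotients, and carefully set up the group-theoretic comparison $S^1\simeq\mathrm{Aff}(\C)$. Each of these steps is standard, but the overall argument depends on keeping track of which actions commute and which are free in order to legitimately swap the quotient orders.
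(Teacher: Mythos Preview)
Your proposal is correct and follows essentially the same approach as the paper: both arguments exploit commuting free actions on the ordered configuration space $F_n(\C)$ and swap the order of (homotopy) quotients. The only difference is cosmetic: the paper takes the $\Sigma_n$-equivariant equivalence $\M_{0,n+1}\simeq F_n(\C)/S^1$ as a known input and works directly with $S^1$, whereas you unpack that equivalence by first using the full affine group $\mathrm{Aff}(\C)$ (so that $\M_{0,n+1}=F_n(\C)/\mathrm{Aff}(\C)$ on the nose) and then contract $\mathrm{Aff}(\C)$ onto $S^1$ at the end. Your version is slightly longer but more self-contained; the paper's is shorter but relies on the reader already knowing the $S^1$ model for $\M_{0,n+1}$.
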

	\begin{proof}
		Recall that $\M_{0,n+1}$ is $\Sigma_n$-homotopy equivalent to $F_n(\C)/S^1$, so the homotopy quotients $(\M_{0,n+1})_{\Sigma_n}$ and $(F_n(\C)/S^1)_{\Sigma_n}$ are homotopy equivalent. $S^1$ acts freely on $F_n(\C)$ so $(F_n(\C)/S^1)_{\Sigma_n}$ is homotopy equivalent to $(F_n(\C)_{S^1})_{\Sigma_n}$. The action of $\Sigma_n$ on $F_n(\C)$ and that of $S^1$ commute so we get a homotopy equivalence between  $(F_n(\C)_{S^1})_{\Sigma_n}$ and  $(F_n(\C)_{\Sigma_n})_{S^1}$, which is homotopy equivalent to $C_n(\C)_{S^1}$ since $\Sigma_n$ acts freely on $F_n(\C)$.
	\end{proof}
	Thanks to this Lemma we have an isomorphism between $H_*^{\Sigma_n}(\M_{0,n+1})$ and $H_*^{S^1}(C_n(\C))$. Section \ref{sec: operations for even classes} will be dedicated to the computation of $H_*^{S^1}(C_n(\C);\F_p)$ for any $n\in\N$ and any prime number $p$. This is obtained by the Serre spectral sequence associated to 
	\begin{equation}\label{eq:fibrazione facile}
		C_n(\C)\hookrightarrow C_n(\C)_{S^1}\to BS^1
	\end{equation}
	\begin{oss}
		The spectral sequence of the fibration \ref{eq:fibrazione facile} is much easier than the one associated to \ref{eq: fibrazione difficile}. First of all in this case we have trivial monodromy since $BS^1$ is simply connected. If we fix a field of coefficients $\F$ for (co)homology, the $E_2$ page of the spectral sequence is given by
		\[
		E^2_{p,q}=H_p(C_n(\C))\otimes H_q(BS^1)
		\]
		In addition, the differential $d^2$ of the $E^2$ page is well known in this case, as we will explain in the rest of this section.
	\end{oss}
	\begin{prop}\label{prop: sequenza spettrale di X per cerchio}
		Let $X$ be any topological space such that $H_i(X;\Z)$ is a finitely generated abelian group for each $i\in\N$. Consider $ S^1\times X$ with the natural action of $S^1$ by multiplication on the left and fix a field $\F$ of coefficients for (co)homology. Then the homological spectral sequence associated to the fibration $ S^1\times X\to (S^1\times X)_{S^1}\to BS^1$ has the following form:
		\begin{enumerate}
			\item $E^2=H_*(S^1)\otimes H_*(X)\otimes H_*(BS^1)$.
			\item Let $y_{2i}$ be a generator of $H_{2i}(BS^1;\F)$, $e_0$ a generator of $H_0(S^1)$ and $x\in H_*(X;\F)$. Then the differential $d^2$ of the second page is given by
			\[
			d^2(e_0\otimes x\otimes y_{2i})=\begin{cases} 0 \text{ if } i=0\\
				[S^1]\otimes x\otimes y_{2i-2} \text{ otherwise}
				
			\end{cases}
			\qquad d^2([S^1]\otimes x\otimes y_{2i})=0
			\]
			\item The spectral sequence degenerates at the third page, which is given by:
			
			\[
			E^3_{i,j}=\begin{cases}
				e_0\otimes H_j(X)\otimes y_0 \text{ if } i=0\\
				0 \text{ if } i>0
			\end{cases}
			\]
		\end{enumerate}

	\end{prop}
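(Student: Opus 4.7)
My plan is to identify the Borel fibration with an honest product built from the universal circle bundle and extract the differentials from the known universal case. Part (1) is the quickest: since $BS^1 = \C P^\infty$ is simply connected the Serre spectral sequence has trivial local coefficients, so $E^2_{p,q} = H_p(BS^1) \otimes H_q(S^1 \times X)$, and Künneth over a field identifies $H_*(S^1\times X) \cong H_*(S^1)\otimes H_*(X)$, giving the claimed $E^2$.

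For part (2) the key observation is that the Borel construction is an honest product. The map
\[
\varphi \colon ES^1 \times_{S^1}(S^1 \times X) \longrightarrow ES^1 \times X, \qquad [e,(s,x)] \longmapsto (es, x),
\]
is a homeomorphism (with inverse $(e,x)\mapsto [e,(1,x)]$), and under it the Borel fibration becomes $S^1\times X \to ES^1 \times X \to BS^1$ with base map $(e,x)\mapsto [e]$. This exhibits our fibration as the product over $BS^1$ of the universal circle bundle $S^1 \to ES^1 \to BS^1$ with the trivial fibration $X \xrightarrow{=} X \to \mathrm{pt}$. By the standard splitting of the Serre spectral sequence of such a product (essentially Eilenberg–Zilber applied to the skeletal filtration of the base), it suffices to compute $d^2$ in the universal circle bundle alone, where convergence of $E^2 = H_*(S^1)\otimes H_*(BS^1)$ to $H_*(ES^1) = \F$ in degree zero forces $d^2(e_0 \otimes y_{2i}) = [S^1]\otimes y_{2i-2}$ for $i\geq 1$ and $d^2 = 0$ otherwise, after choosing the generators $y_{2i}$ compatibly. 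Tensoring with $H_*(X)$ then gives the formulas in the statement. Part (3) is immediate: the $d^2$ pairings cancel everything in columns $p \geq 1$ of $E^3$, leaving only $e_0 \otimes H_*(X) \otimes y_0$ in column $0$, and higher differentials vanish for positional reasons.

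The delicate point is the appeal to the Künneth-type splitting of the homological Serre spectral sequence for a product of fibrations. If one prefers an entirely self-contained derivation of part (2), an alternative is to combine naturality with respect to the equivariant inclusion $S^1 \hookrightarrow S^1 \times X$, $s \mapsto (s, x_0)$, which settles the formula on $e_0 \otimes [x_0] \otimes y_{2i}$, with the $H^*(BS^1)$-module structure on the spectral sequence to propagate it to an arbitrary $x \in H_*(X)$.
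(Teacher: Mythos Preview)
Your primary argument is correct and takes a genuinely different route from the paper. The paper passes to cohomology and exploits multiplicativity: it identifies $(S^1\times X)_{S^1}\simeq X$, observes that classes in $H^*(X)\subseteq E_2^{0,*}$ are permanent cycles because they lie in the image of the edge homomorphism $p^*:H^*(X)\to H^*(S^1\times X)$, and then uses the Leibniz rule together with a comparison along the equivariant \emph{projection} $S^1\times X\to S^1$ to obtain $d_2(a)=c$. Your approach stays in homology and instead recognises the Borel fibration as the fibrewise product of the universal circle bundle with the trivial fibration $X\to X\to\mathrm{pt}$, then invokes the K\"unneth identification of the Serre spectral sequence of a product of fibrations. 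This is clean and conceptual, but as you yourself flag, the K\"unneth splitting for Serre spectral sequences is the step most readers will want justified; the paper's route avoids this by using only the (standard) multiplicativity of the cohomological spectral sequence.

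One caveat on your proposed alternative: naturality under the equivariant inclusion $S^1\hookrightarrow S^1\times X$, $s\mapsto(s,x_0)$, indeed pins down $d^2$ on $e_0\otimes[x_0]\otimes y_{2i}$, but the $H^*(BS^1)$-module structure alone will not propagate this to an arbitrary $x\in H_*(X)$, since that module structure moves classes along the base direction, not within the fibre. To pass from $[x_0]$ to a general $x$ one really needs either the product-of-fibrations argument you gave first, or the paper's trick of comparing with the projection $S^1\times X\to S^1$ and using multiplicativity in cohomology.
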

	\begin{proof}
		Point $(1)$ is clear, $(3)$ follows from $(2)$. So the only thing to prove is the statement of point $(2)$. Since $S^1$ acts freely on $S^1\times X$, the homotopy quotient $(S^1\times X)_{S^1}$ is homotopy equivalent to the strict quotient $(S^1\times X)/S^1=X$. The original fiber sequence can be rewritten as $S^1\times X\to X\to BS^1$, where the first map $p:S^1\times X\to X$ is the projection on the second factor. We prove the dual statement in order to exploit the multiplicativity of the cohomological spectral sequence. The second page looks as follows:
		\[
		E_2=\frac{\F[a]}{(a^2)}\otimes H^*(X;\F)\otimes \F[c]
		\]
		where $a$ is a generator of $H^1(S^1)$ and $c$ is a generator of $H^2(BS^1)$. The classes $y\in H^*(X)\subseteq E_2^{0,*}$ are infinite cycles because they belong to the image of $p^*:H^*(X)\to H^*(S^1\times X)$. This observation implies that $E_3=E_{\infty}$, because the only multiplicative generator which can have non zero differentials is $a$, which is a class in $E_2^{0,1}$, and therefore $d_n(a)=0$ for $n\geq 3$. Now we claim that $d_2(a)$ is a generator of $E_2^{2,0}=\F c$ : consider the projection $p:S^1\times X\to S^1$ on the first factor. This map is $S^1$-equivariant, so we get a map of fibrations
		\[
		\begin{tikzcd}
			&S^1\times X \arrow[r] \arrow[d,hook] & S^1 \arrow[d,hook]\\
			& X \arrow[d] \arrow[r] & \ast \arrow[d]\\
			& BS^1 \arrow[r] & BS^1
		\end{tikzcd}
		\]
		The claim now follows by comparing the spectral sequences of the right and left fibration.
	\end{proof}
	\begin{prop}\label{prop: differenziale dato da delta}
		Let $X$ be a topological space of finite type equipped with an $S^1$ action. Fix $\F$ a field of coefficients for (co)homology. Then the differential $d^2$ of the second page of the homological spectral sequence associated to $X\hookrightarrow X_{S^1}\to BS^1$ is given by 
		\[
		d^2(x\otimes y_{2i})=\begin{cases} 0 \text{ if } i=0\\
			\Delta(x)\otimes y_{2i-2} \text{ otherwise}
			
		\end{cases}
		\]
		where $y_{2i}$ is the generator of $H_{2i}(BS^1;\F)$.
	\end{prop}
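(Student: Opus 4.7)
The plan is to reduce this to Proposition \ref{prop: sequenza spettrale di X per cerchio} via naturality of the Serre spectral sequence, using the action map $\theta:S^1\times X\to X$ as an equivariant map relating a free $S^1$-space to $X$ itself.

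First, I would observe that $\theta$ is $S^1$-equivariant if we equip $S^1\times X$ with the action of $S^1$ by multiplication on the first factor only, and $X$ with its given $S^1$-action. Indeed, $\theta(s'\cdot(s,x))=(s's)\cdot x=s'\cdot\theta(s,x)$. Therefore $\theta$ induces a map of Borel constructions $\theta_{hS^1}:(S^1\times X)_{S^1}\to X_{S^1}$ fitting into a map of fibrations over $BS^1$:
\[
\begin{tikzcd}
S^1\times X \arrow[r,"\theta"] \arrow[d,hook] & X \arrow[d,hook]\\
(S^1\times X)_{S^1} \arrow[d] \arrow[r,"\theta_{hS^1}"] & X_{S^1} \arrow[d]\\
BS^1 \arrow[r,equal] & BS^1
\end{tikzcd}
\]

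Next, naturality of the Serre spectral sequence gives a morphism between the two spectral sequences which on the $E^2$-page is the map induced by $\theta_*$ on the fiber, tensored with the identity on $H_*(BS^1)$. By the very definition of the BV-operator recalled in Section \ref{sec: gravity operad}, the action of $\theta_*$ on the Künneth decomposition of $H_*(S^1\times X)$ satisfies $\theta_*(e_0\otimes x)=x$ and $\theta_*([S^1]\otimes x)=\Delta(x)$. In particular the induced map on the fiber part of $E^2$ is surjective, so every class of the form $x\otimes y_{2i}$ in the target $E^2$ lifts to $e_0\otimes x\otimes y_{2i}$ in the source.

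Now I would conclude by comparing differentials. By Proposition \ref{prop: sequenza spettrale di X per cerchio} applied to the source fibration, for $i\geq 1$ we have $d^2(e_0\otimes x\otimes y_{2i})=[S^1]\otimes x\otimes y_{2i-2}$. Pushing through the morphism of spectral sequences, naturality gives
\[
d^2(x\otimes y_{2i})=\theta_*([S^1]\otimes x)\otimes y_{2i-2}=\Delta(x)\otimes y_{2i-2},
\]
as claimed. For $i=0$ the statement is automatic since $x\otimes y_0\in E^2_{0,*}$ sits on the vertical axis of a first-quadrant spectral sequence. The only subtle point is checking that $\theta$ is equivariant with the stated actions and that $\theta_*$ really computes $\Delta$ on $[S^1]\otimes x$, which is nothing more than unwinding the definition of $\Delta$; everything else is formal from naturality and the preceding proposition.
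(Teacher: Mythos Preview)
Your proof is correct and follows exactly the approach the paper takes: the paper's proof simply draws the same map of fibrations induced by $\theta$ and writes ``The statement follows combining the definition of $\Delta$ and the formula for $d^2$ given in Proposition \ref{prop: sequenza spettrale di X per cerchio}.'' You have unpacked precisely the details that sentence leaves implicit, namely the equivariance of $\theta$, the identification $\theta_*([S^1]\otimes x)=\Delta(x)$, and the naturality argument for the Serre spectral sequence.
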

	\begin{proof}
		Consider the map of fibrations
		\[
		\begin{tikzcd}
			& S^1\times X \arrow[r,"\theta"] \arrow[d,hook]& X\arrow[d,hook]\\
			& (S^1\times X)_{S^1} \arrow[r] \arrow[d]& X_{S^1}\arrow[d]\\
			&BS^1\arrow[r] &BS^1
		\end{tikzcd}
		\]
		The statement follows combining the definition of $\Delta$ and the formula for $d^2$ given in Proposition \ref{prop: sequenza spettrale di X per cerchio}.
	\end{proof}
	We end this Section observing that $(\M_{0,n+1})_{\Sigma_n}\simeq C_n(\C)_{S^1}$ is the classifying space for the quotient of the braid group by its center: let $B_n$ be the Braid group on $n$-strands and let us denote by $\sigma_i$ the $i$-th generator of $B_n$. It is well known that the center of $B_n$ is an infinite cyclic group generated by $\delta^2$, where $\delta\coloneqq\sigma_1(\sigma_2\sigma_1)(\sigma_3\sigma_2\sigma_1)\dots(\sigma_{n-1}\sigma_{n-2}\dots\sigma_1)$ is the following braid:

		\centering

		\tikzset{every picture/.style={line width=0.75pt}} 
		
		\begin{tikzpicture}[x=0.75pt,y=0.75pt,yscale=-1,xscale=1]
			
			\draw    (83,70) -- (162.5,70) ;
			\draw  [color={rgb, 255:red, 0; green, 0; blue, 0 }  ,draw opacity=1 ][fill={rgb, 255:red, 0; green, 0; blue, 0 }  ,fill opacity=1 ] (100.77,70) .. controls (100.77,68.77) and (101.77,67.77) .. (103,67.77) .. controls (104.23,67.77) and (105.23,68.77) .. (105.23,70) .. controls (105.23,71.23) and (104.23,72.23) .. (103,72.23) .. controls (101.77,72.23) and (100.77,71.23) .. (100.77,70) -- cycle ;
			\draw  [color={rgb, 255:red, 0; green, 0; blue, 0 }  ,draw opacity=1 ][fill={rgb, 255:red, 0; green, 0; blue, 0 }  ,fill opacity=1 ] (80.77,70) .. controls (80.77,68.77) and (81.77,67.77) .. (83,67.77) .. controls (84.23,67.77) and (85.23,68.77) .. (85.23,70) .. controls (85.23,71.23) and (84.23,72.23) .. (83,72.23) .. controls (81.77,72.23) and (80.77,71.23) .. (80.77,70) -- cycle ;
			\draw  [color={rgb, 255:red, 0; green, 0; blue, 0 }  ,draw opacity=1 ][fill={rgb, 255:red, 0; green, 0; blue, 0 }  ,fill opacity=1 ] (120.52,70) .. controls (120.52,68.77) and (121.52,67.77) .. (122.75,67.77) .. controls (123.98,67.77) and (124.98,68.77) .. (124.98,70) .. controls (124.98,71.23) and (123.98,72.23) .. (122.75,72.23) .. controls (121.52,72.23) and (120.52,71.23) .. (120.52,70) -- cycle ;
			\draw  [color={rgb, 255:red, 0; green, 0; blue, 0 }  ,draw opacity=1 ][fill={rgb, 255:red, 0; green, 0; blue, 0 }  ,fill opacity=1 ] (140.8,70.26) .. controls (140.8,69.03) and (141.8,68.03) .. (143.03,68.03) .. controls (144.26,68.03) and (145.26,69.03) .. (145.26,70.26) .. controls (145.26,71.49) and (144.26,72.49) .. (143.03,72.49) .. controls (141.8,72.49) and (140.8,71.49) .. (140.8,70.26) -- cycle ;
			\draw  [color={rgb, 255:red, 0; green, 0; blue, 0 }  ,draw opacity=1 ][fill={rgb, 255:red, 0; green, 0; blue, 0 }  ,fill opacity=1 ] (160.27,70) .. controls (160.27,68.77) and (161.27,67.77) .. (162.5,67.77) .. controls (163.73,67.77) and (164.73,68.77) .. (164.73,70) .. controls (164.73,71.23) and (163.73,72.23) .. (162.5,72.23) .. controls (161.27,72.23) and (160.27,71.23) .. (160.27,70) -- cycle ;
			\draw    (83,70) .. controls (83.5,92.4) and (162.5,95.77) .. (162.5,152.77) ;
			\draw    (103,70) .. controls (102.5,78.6) and (99.5,79.6) .. (95.5,81.6) ;
			\draw    (91,85) .. controls (87.5,86.4) and (84.5,86.8) .. (83.5,90.8) ;
			\draw    (122.75,72.23) .. controls (122.25,80.83) and (111.5,86.4) .. (107.5,88.4) ;
			\draw    (143.03,72.49) .. controls (142.53,81.09) and (127.5,93.4) .. (123.5,95.4) ;
			\draw    (162.5,72.23) .. controls (162,80.83) and (140.5,101.4) .. (136.5,103.4) ;
			\draw    (83.5,90.8) .. controls (79.5,102.8) and (139.5,105.4) .. (143.03,153.03) ;
			\draw    (83,155) -- (162.5,155) ;
			\draw  [color={rgb, 255:red, 0; green, 0; blue, 0 }  ,draw opacity=1 ][fill={rgb, 255:red, 0; green, 0; blue, 0 }  ,fill opacity=1 ] (100.77,155) .. controls (100.77,153.77) and (101.77,152.77) .. (103,152.77) .. controls (104.23,152.77) and (105.23,153.77) .. (105.23,155) .. controls (105.23,156.23) and (104.23,157.23) .. (103,157.23) .. controls (101.77,157.23) and (100.77,156.23) .. (100.77,155) -- cycle ;
			\draw  [color={rgb, 255:red, 0; green, 0; blue, 0 }  ,draw opacity=1 ][fill={rgb, 255:red, 0; green, 0; blue, 0 }  ,fill opacity=1 ] (80.77,155) .. controls (80.77,153.77) and (81.77,152.77) .. (83,152.77) .. controls (84.23,152.77) and (85.23,153.77) .. (85.23,155) .. controls (85.23,156.23) and (84.23,157.23) .. (83,157.23) .. controls (81.77,157.23) and (80.77,156.23) .. (80.77,155) -- cycle ;
			\draw  [color={rgb, 255:red, 0; green, 0; blue, 0 }  ,draw opacity=1 ][fill={rgb, 255:red, 0; green, 0; blue, 0 }  ,fill opacity=1 ] (120.52,155) .. controls (120.52,153.77) and (121.52,152.77) .. (122.75,152.77) .. controls (123.98,152.77) and (124.98,153.77) .. (124.98,155) .. controls (124.98,156.23) and (123.98,157.23) .. (122.75,157.23) .. controls (121.52,157.23) and (120.52,156.23) .. (120.52,155) -- cycle ;
			\draw  [color={rgb, 255:red, 0; green, 0; blue, 0 }  ,draw opacity=1 ][fill={rgb, 255:red, 0; green, 0; blue, 0 }  ,fill opacity=1 ] (140.8,155.26) .. controls (140.8,154.03) and (141.8,153.03) .. (143.03,153.03) .. controls (144.26,153.03) and (145.26,154.03) .. (145.26,155.26) .. controls (145.26,156.49) and (144.26,157.49) .. (143.03,157.49) .. controls (141.8,157.49) and (140.8,156.49) .. (140.8,155.26) -- cycle ;
			\draw  [color={rgb, 255:red, 0; green, 0; blue, 0 }  ,draw opacity=1 ][fill={rgb, 255:red, 0; green, 0; blue, 0 }  ,fill opacity=1 ] (160.27,155) .. controls (160.27,153.77) and (161.27,152.77) .. (162.5,152.77) .. controls (163.73,152.77) and (164.73,153.77) .. (164.73,155) .. controls (164.73,156.23) and (163.73,157.23) .. (162.5,157.23) .. controls (161.27,157.23) and (160.27,156.23) .. (160.27,155) -- cycle ;
			\draw    (103.75,91.23) .. controls (100.5,92.4) and (96.5,94.2) .. (91.5,96.8) ;
			\draw    (117.75,98.23) .. controls (114.5,99.4) and (109.5,102.8) .. (106.5,104.4) ;
			\draw    (132.75,106.23) .. controls (128.5,108.8) and (125.5,109.8) .. (120.5,112.4) ;
			\draw    (87,99) .. controls (82.5,102.8) and (83.5,103.8) .. (83.5,106.8) ;
			\draw    (83.5,106.8) .. controls (83.5,117.8) and (119.22,107.37) .. (122.75,155) ;
			\draw    (102.75,107.23) .. controls (100.5,108.8) and (97.5,109.8) .. (93.5,112.4) ;
			\draw    (116.75,114.23) .. controls (114.5,115.8) and (110.5,117.8) .. (107.5,118.8) ;
			\draw    (90,115) .. controls (86.5,116.8) and (84.5,117.8) .. (86.5,123.8) ;
			\draw    (86.5,123.8) .. controls (91.5,129.8) and (103.5,127.8) .. (103,155) ;
			\draw    (103.75,121.23) .. controls (100.5,122.4) and (97.5,124.2) .. (94.5,125.8) ;
			\draw    (90,129) .. controls (83.5,132.8) and (83.5,142.8) .. (83,155) ;
			
			\draw (44,106) node [anchor=north west][inner sep=0.75pt]   [align=left] {$\displaystyle \delta =$};

		\end{tikzpicture}
	\flushleft
	\begin{oss}
		The center of the Pure braid group $PB_n$ is an infinite cyclic group as well, generated by $\delta^2$.
	\end{oss}
	\begin{prop}\label{prop: quizienti omotopici come spazi classificanti}
		The homotopy quotient $C_n(\C)_{S^1}$ is the classifying space for the group $B_n/Z(B_n)$. Similarly, $F_n(\C)_{S^1}$ is the classifying space for $PB_n/Z(PB_n)$.
	\end{prop}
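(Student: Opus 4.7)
The strategy is to apply the long exact sequence of homotopy groups to the Borel fibration
\[
C_n(\C)\hookrightarrow C_n(\C)_{S^1}\to BS^1
\]
and show that $C_n(\C)_{S^1}$ is aspherical with fundamental group $B_n/Z(B_n)$.

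First I recall that $C_n(\C)$ is a $K(B_n,1)$ (Fadell--Neuwirth), so $\pi_k(C_n(\C))=0$ for $k\geq 2$ and $\pi_1(C_n(\C))=B_n$. Since $BS^1\simeq K(\Z,2)$, its only nonvanishing homotopy group is $\pi_2(BS^1)\cong \Z$. Plugging these into the long exact sequence, for $k\geq 3$ both neighbors of $\pi_k(C_n(\C)_{S^1})$ vanish, so $\pi_k(C_n(\C)_{S^1})=0$. At the low end we have
\[
0\to \pi_2(C_n(\C)_{S^1})\to \pi_2(BS^1)\xrightarrow{\partial} \pi_1(C_n(\C))\to \pi_1(C_n(\C)_{S^1})\to 0.
\]
Once I identify the image of $\partial$ and check it is injective, the proposition follows.

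The main step, and really the only non-formal one, is to identify the connecting homomorphism $\partial:\pi_2(BS^1)\to \pi_1(C_n(\C))=B_n$. Concretely, a generator of $\pi_2(BS^1)=\pi_1(S^1)$ is represented by the fundamental class of the circle fiber of $ES^1\to BS^1$, and under the connecting map it is sent to the $S^1$-orbit of a basepoint configuration, traversed once. For the scalar rotation action of $S^1$ on $\C$, this orbit is precisely the loop in $C_n(\C)$ obtained by rotating a configuration of $n$ points through a full turn about the origin, and this loop represents the full twist $\delta^2\in B_n$ (the standard geometric description of the generator of $Z(B_n)$, cf.\ Figure~\ref{fig:generatore centro}). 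Thus $\mathrm{Im}(\partial)=\langle \delta^2\rangle=Z(B_n)$, and since $\delta^2$ has infinite order, $\partial$ is injective, so $\pi_2(C_n(\C)_{S^1})=0$ and $\pi_1(C_n(\C)_{S^1})\cong B_n/Z(B_n)$.

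For the second assertion I repeat the same argument verbatim for the fibration $F_n(\C)\hookrightarrow F_n(\C)_{S^1}\to BS^1$, using that $F_n(\C)$ is a $K(PB_n,1)$ and that the $S^1$-action restricts to $F_n(\C)$ (rotation permutes no strands), so that the $S^1$-orbit again represents $\delta^2$, which by the preceding remark generates $Z(PB_n)$. The only step that might deserve a line of verification is the identification of the orbit loop with $\delta^2$; this can be made rigorous either by an explicit braid picture or by observing that the natural inclusion $F_n(\C)\hookrightarrow C_n(\C)$ is $S^1$-equivariant, so the image of $\partial$ for $PB_n$ is the preimage of $\langle \delta^2\rangle$ under $PB_n\hookrightarrow B_n$, which is again $\langle \delta^2\rangle$ and equals $Z(PB_n)$.
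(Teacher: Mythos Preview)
Your proof is correct and follows essentially the same approach as the paper: both arguments reduce to the long exact sequence of a fibration and the identification of the orbit loop with the full twist $\delta^2$. The only cosmetic difference is that the paper uses the principal fibration $S^1\hookrightarrow ES^1\times C_n(\C)\to C_n(\C)_{S^1}$, so the key map appears as the fiber inclusion $i_*:\pi_1(S^1)\to\pi_1(C_n(\C))$ rather than as the connecting homomorphism $\partial:\pi_2(BS^1)\to\pi_1(C_n(\C))$; under the canonical isomorphism $\pi_2(BS^1)\cong\pi_1(S^1)$ these are the same map, and the resulting four-term exact sequences coincide.
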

	\begin{proof}
		Consider the long exact sequence for homotopy groups associated to the fibration $S^1\hookrightarrow ES^1\times C_n(\C)\to C_n(\C)_{S^1}$. To get the result just observe that the map $i_*:\pi_1(S^1)\to\pi_1(ES^1\times C_n(\C))\cong\pi_1(C_n(\C))$ induced by the inclusion of a fiber sends the generator of $\pi_1(S^1)$ to $\delta^2$. The case of the ordered configurations is completely analogous.
	\end{proof}
	We can summarize these observations in the following table:
	\[
	\begin{tabular}{lc}
		\toprule
		Group & Models for the classifying space \\
		\midrule
		\medskip
		$PB_n$ & $F_n(\C)$ \\
		\midrule
		\medskip
		$B_n$ & $C_n(\C)$ \\
		\midrule
		\medskip
		$PB_n/Z(PB_n)$ & $F_n(\C)_{S^1}$ \quad$ \mathcal{M}_{0,n+1}$ \\
		\midrule
		\medskip
		$B_n/Z(B_n)$ & $C_n(\C)_{S^1}$\quad $ (\mathcal{M}_{0,n+1})_{\Sigma_n}$\\
		\bottomrule
	\end{tabular}
	\]
	
	\section{Operations for even degree classes}\label{sec: operations for even classes}
	
	In this section we compute $H_*^{\Sigma_n}(\mathcal{M}_{0,n+1};\mathbb{F}_p)$ for any $n\in\mathbb{N}$ and any prime number $p$. As explained in Section \ref{sec: quoziente omotopico come spazio classificante} this computation can be done by looking at the Serre spectral sequence associated to the fibration
	\begin{equation*}
		\begin{tikzcd}
			& C_n(\C)\arrow[r,hook] & C_n(\C)_{S^1} \arrow[r] & BS^1   
		\end{tikzcd}
	\end{equation*}
	instead of
	\[
	\begin{tikzcd}
		& \M_{0,n+1} \arrow[r,hook] & (\M_{0,n+1})_{\Sigma_n} \arrow[r] & B\Sigma_n   
	\end{tikzcd}
	\]
	Before going into the details of the computation, let us review some preliminary results.
	\subsection{Preliminares}\label{sec: preliminari}
	We start by reviewing the basics of equivariant cohomology.  We refer to \cite{TomDieck} and \cite{BredonGroups} for further details. 
	\subsubsection{Equivariant cohomology} let $G=\Z/n$ or $S^1$ and $M$ be an abelian group which we use as coefficients for (co)homology. We also suppose that $X$ is a finite dimensional $G$-complex of finite orbit type. Let $c\in H^2(BG;\Z)$ be a generator, and consider the multiplicative subset $S\coloneqq\{1,c,c^2,\dots,\}$. Consider the following subspace:
	\[
	FX\coloneqq\{x\in X\mid \Tilde{H}^*(BG_x;M)\neq 0\}
	\]
	where $G_x$ denotes the stabilizer of a point $x\in X$. A crucial result is the so called \emph{Localization Theorem}:
	\begin{thm}[\cite{TomDieck}, p.198]\label{thm:localization theorem}
		The inclusion $i:FX\to X$ induces an isomorphism
		\[
		i^*:S^{-1}H^*_G(X;M)\to S^{-1}H^*_G(FX;M)
		\]
		where $S^{-1}H^*_G(X;M)$ is the localization of the $H^*(BG;M)$-module $H^*_G(X;M)$ to the subset $S$.
	\end{thm}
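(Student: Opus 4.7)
The plan is to deduce the isomorphism from vanishing of the localized relative equivariant cohomology of the pair $(X,FX)$. The long exact sequence
\[
\cdots\to H^*_G(X,FX;M)\to H^*_G(X;M)\xrightarrow{i^*} H^*_G(FX;M)\to\cdots
\]
is a sequence of $H^*(BG;M)$-modules, and localization at $S$ is an exact functor. So it suffices to prove
\[
S^{-1}H^*_G(X,FX;M)=0.
\]

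To reduce to a local computation, I would use the hypothesis that $X$ is a finite-dimensional $G$-complex of finite orbit type to filter $X\setminus FX$ by $G$-equivariant cells of the form $G/H\times D^k$, where by the very definition of $FX$ every isotropy subgroup $H$ occurring outside $FX$ satisfies $\widetilde H^*(BH;M)=0$. A skeletal induction (or the spectral sequence of the skeletal filtration of the pair $(X,FX)$, which converges because the filtration is finite) reduces the vanishing of $S^{-1}H^*_G(X,FX;M)$ to the vanishing of
\[
S^{-1}H^*_G(G/H\times D^k,G/H\times S^{k-1};M)\cong S^{-1}H^*(BH;M)
\]
for each such isotropy group $H$.

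For $G=S^1$ or $\Z/n$ every such $H$ is itself cyclic of some finite order, and the hypothesis $\widetilde H^*(BH;M)=0$ forces $H^*(BH;M)\cong M$ concentrated in degree zero. The $H^*(BG;M)$-module structure on $H^*(BH;M)$ is given by the restriction $H^*(BG;M)\to H^*(BH;M)$ induced by the inclusion $BH\hookrightarrow BG$; since this restriction vanishes in positive degrees, the generator $c\in H^2(BG;M)$ acts as zero on $H^*(BH;M)$. Consequently, for any $x\in H^*(BH;M)$ we have $x=(cx)/c=0$ in $S^{-1}H^*(BH;M)$, so $S^{-1}H^*(BH;M)=0$, as required.

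The main obstacle I expect is the inductive/spectral sequence step: one must assemble the local vanishing statements over the (possibly many) orbit types appearing in $X\setminus FX$ into a global vanishing statement for $S^{-1}H^*_G(X,FX;M)$. The finite-orbit-type and finite-dimensionality hypotheses are precisely what makes the skeletal filtration finite and ensures convergence of the associated spectral sequence; without them one would need to worry about $\lim^1$ issues and the non-exactness of localization with respect to infinite products. Once that convergence is in place, the proof is essentially formal.
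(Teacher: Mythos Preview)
The paper does not give its own proof of this statement: it is quoted as a background result with a citation to tom Dieck's book. Your proposal is essentially the standard argument found there---reduce via the long exact sequence of the pair to showing $S^{-1}H^*_G(X,FX;M)=0$, then use a finite cellular filtration of $X\setminus FX$ by orbit-type cells $G/H\times D^k$ and check that $S^{-1}H^*(BH;M)=0$ whenever $\widetilde H^*(BH;M)=0$ because $c$ then acts by zero---and it is correct as outlined.

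One small remark: in the paper's setup $c$ is taken in $H^2(BG;\Z)$ rather than $H^2(BG;M)$, so strictly speaking the module structure you are localizing is the $H^*(BG;\Z)$-action on $H^*_G(-;M)$ via cup product; your argument goes through unchanged since multiplication by $c$ still raises degree by two and hence lands in $\widetilde H^*(BH;M)=0$.
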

	A consequence of this Theorem is the following:
	\begin{thm}[\cite{TomDieck}, p.199]\label{thm:isomorfismo coomologia equivariante tra spazio e i suoi punti fissi}
		Suppose $H^i(X;M)=0$ for $i>n$. Then the inclusion $i:FX\to X$ induces an isomorphism
		\[
		H^i_G(X;M)\to H^i_G(FX;M)
		\]
		for any $i>n-dim(G)$ and an epimorphism for $i=n-dim(G)$.
	\end{thm}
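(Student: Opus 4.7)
The plan is to reduce the statement to a vanishing range for the relative equivariant cohomology $H^*_G(X, FX; M)$, and then to derive this vanishing by combining the Localization Theorem with the Borel spectral sequence. The long exact sequence of the pair
\[
\cdots \to H^i_G(X, FX; M) \to H^i_G(X; M) \xrightarrow{i^*} H^i_G(FX; M) \to H^{i+1}_G(X, FX; M) \to \cdots
\]
shows that both the asserted isomorphism (for $i > n - \dim(G)$) and the asserted epimorphism (at $i = n - \dim(G)$) are consequences of the single statement
\[
H^j_G(X, FX; M) = 0 \quad \text{for every } j > n - \dim(G).
\]

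The first step would be to apply Theorem \ref{thm:localization theorem} to the cofiber $X/FX$. By the very definition of $FX$, every point of $X \setminus FX$ has $\tilde H^*(BG_x; M) = 0$, so $F(X/FX)$ reduces to the basepoint. Hence $S^{-1} H^*_G(X, FX; M) = 0$, and every class in $H^*_G(X, FX; M)$ is annihilated by some power of the generator $c \in H^2(BG;\Z)$.

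The second step is to exploit the Borel spectral sequence
\[
E_2^{p,q} = H^p(BG; H^q(X, FX; M)) \Longrightarrow H^{p+q}_G(X, FX; M).
\]
The hypothesis $H^i(X; M) = 0$ for $i > n$, combined with the fact that $FX$ is a closed $G$-subcomplex of $X$, forces $H^q(X, FX; M) = 0$ for $q > n$. Thus the spectral sequence lives in the horizontal strip $0 \le q \le n$, and the induced decreasing filtration on $H^i_G(X, FX; M)$ starts already at $F^{i-n} = H^i_G(X, FX; M)$. Because multiplication by $c$ raises filtration degree by $2$ and the strip is of finite height, this multiplication is injective on the abutment in sufficiently high total degree; combining this eventual injectivity with the $c$-torsion produced by the Localization Theorem forces $H^j_G(X, FX; M) = 0$ above the critical threshold.

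The main obstacle I foresee is calibrating the threshold so that it is exactly $n - \dim(G)$. A naive spectral-sequence estimate only produces vanishing for $j$ above $n$; the sharper bound reflects the fact that the fiber of $EG \to BG$ is $\dim(G)$-dimensional, so that the equivariant cohomology of a $G$-space whose stabilizers all have trivial reduced $H^*(BG_x;M)$ is concentrated in degrees at most $n - \dim(G)$. Thus for $G = S^1$ the shift by one comes from the $1$-dimensional fiber of $ES^1 \to BS^1$, whereas for $G = \Z/n$ the dimension is zero and no shift arises. Making this Poincar\'e-duality-style shift rigorous by tracking how $c$-multiplication interacts with the filtration, or equivalently by comparing with the free case where $H^*_G(Y) = H^*(Y/G)$ loses exactly $\dim(G)$ in cohomological dimension, is the delicate part of the argument.
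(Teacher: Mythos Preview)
The paper gives no proof of this statement; it is quoted from tom Dieck's book with only a page reference. There is therefore no argument in the paper to compare yours against.

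Your outline is nonetheless the standard one. Reducing via the long exact sequence of the pair $(X, FX)$ to the vanishing of $H^j_G(X, FX; M)$ for $j > n - \dim(G)$ is correct, and the Localization Theorem does make every class in $H^*_G(X, FX; M)$ into $c$-torsion. One small point: the assertion that $H^q(X, FX; M) = 0$ for $q > n$ is not a formal consequence of $H^q(X; M) = 0$ for $q > n$, since the long exact sequence of the pair also feeds in $H^{q-1}(FX; M)$, about which the hypothesis says nothing directly; some extra input is required there. More substantially, you are right that the Borel spectral sequence alone does not yield the sharp threshold $n - \dim(G)$: a straightforward strip-plus-torsion argument only gives vanishing above $n$ (or above $\dim X$). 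The resolution is exactly the route you indicate in your final paragraph: on $X \setminus FX$ every isotropy group $G_x$ has $\tilde H^*(BG_x; M) = 0$, so for $G = S^1$ the isotropy is finite, the projection from the Borel construction to the orbit space is an $M$-cohomology isomorphism, and the orbit space has $M$-cohomological dimension at most $n - \dim(G)$ because the orbits are $\dim(G)$-dimensional. Carrying out that orbit-space argument, rather than trying to squeeze the shift out of the spectral-sequence filtration, is what turns your sketch into a complete proof.
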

	Now let us restrict to the case $G=\Z/p$, with $p$ a prime number. In the following we will use $\F_p$ coefficients for the $\Z/p$-equivariant (co)homology of $X$.
	
	\begin{thm}[\cite{TomDieck} p. 200] \label{thm:tom Dieck degeneration theorem}
		Suppose $dim_{\F_p}\bigoplus_{k\in\N}H^k(X)$ is finite and $H^k(X)=0$ for any $k>n$. Then we have
		\begin{equation}\label{eq: diguguaglianza equivariante}
			dim_{\F_p}\bigoplus_{k\in\N}H^k(X^{\Z/p})\leq dim_{\F_p}\bigoplus_{k\in\N}H^k(X)   
		\end{equation}

		Moreover, the following assertions are equivalent:
		\begin{enumerate}
			\item Equality holds in (\ref{eq: diguguaglianza equivariante}).
			\item The map induced by the inclusion $i^*:H_{\Z/p}^*(X)\to H^*(X)$ is surjective.
			\item $dim_{\F_p}H_{\Z/p}^k(X)=dim_{\F_p}\bigoplus_{i\in\N}H^i(X)$ for $k>n$.
			\item $\Z/p$ acts trivially on $H^*(X)$ and the Serre spectral sequence of $X\hookrightarrow X_{\Z/p}\to B\Z/p$ degenerates at the $E_2$ page.
		\end{enumerate}
		
	\end{thm}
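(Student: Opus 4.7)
The plan is to compare the Serre spectral sequence of the Borel fibration $X \hookrightarrow X_{\Z/p} \to B\Z/p$, whose $E_2$-page is $E_2^{s,t}=H^s(\Z/p;H^t(X;\F_p))$, with the high-degree computation of $H^*_{\Z/p}(X;\F_p)$ afforded by Theorem \ref{thm:isomorfismo coomologia equivariante tra spazio e i suoi punti fissi}.

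First I would establish (\ref{eq: diguguaglianza equivariante}). For $G=\Z/p$ the subspace $FX$ of Theorem \ref{thm:localization theorem} coincides with the fixed-point set $X^{\Z/p}$: any isotropy group is either trivial or the whole group, and only $B\Z/p$ has nontrivial reduced mod-$p$ cohomology. Since $\dim(\Z/p)=0$, Theorem \ref{thm:isomorfismo coomologia equivariante tra spazio e i suoi punti fissi} yields an isomorphism $H^k_{\Z/p}(X;\F_p)\cong H^k_{\Z/p}(X^{\Z/p};\F_p)$ for every $k>n$. Triviality of the action on $X^{\Z/p}$ identifies the right hand side with $\bigoplus_{i+j=k}H^i(X^{\Z/p};\F_p)\otimes H^j(B\Z/p;\F_p)$; using $\dim_{\F_p} H^j(B\Z/p;\F_p)=1$ for all $j\geq 0$ together with the vanishing of $H^i(X^{\Z/p};\F_p)$ for $i>n$, this dimension stabilizes at $\dim_{\F_p} H^*(X^{\Z/p};\F_p)$ for every $k>n$. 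On the spectral-sequence side, the abutment dimension in degree $k$ is bounded by $\sum_t \dim H^{k-t}(\Z/p;H^t(X;\F_p))$, and an elementary computation gives $\dim H^j(\Z/p;M)\leq \dim_{\F_p} M$ for every $\F_p[\Z/p]$-module $M$ and every $j\geq 1$, with equality iff the action on $M$ is trivial; this is obtained by decomposing $M$ into indecomposables $J_k=\F_p[\sigma]/(\sigma-1)^k$ for $1\leq k\leq p$ and computing $H^*(\Z/p;J_k)$ via the standard periodic resolution. Combining these bounds yields $\dim H^*(X^{\Z/p})\leq \dim H^*(X)$.

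For the equivalences, the chain of estimates above makes $(1)\Leftrightarrow(3)\Leftrightarrow(4)$ transparent: equality in (\ref{eq: diguguaglianza equivariante}) forces every intermediate inequality to become an equality, which simultaneously enforces collapse of the Serre spectral sequence at $E_2$ and, by the equality case of the algebraic bound above, triviality of the $\Z/p$-action on each $H^t(X;\F_p)$; this is exactly (4), and both (3) and (1) then follow instantly, while conversely either (3) or (4) inserted back into the two-sided estimate recovers (1).

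It remains to handle (2). The implication $(4)\Rightarrow(2)$ is the standard edge-map observation: with trivial coefficients and $E_2=E_\infty$, the restriction $i^*:H^*_{\Z/p}(X)\to H^*(X)$ coincides with the surjection $H^*_{\Z/p}(X)\twoheadrightarrow E_\infty^{0,*}=E_2^{0,*}=H^*(X)$. Conversely, if $i^*$ is surjective in every degree then $\mathrm{im}(i^*)\subseteq H^*(X)^{\Z/p}$ forces the action on $H^*(X)$ to be trivial, so $E_2^{s,t}=H^s(B\Z/p;\F_p)\otimes H^t(X;\F_p)$; surjectivity of $i^*$ additionally tells us that every class of $E_2^{0,*}$ is a permanent cycle (since the edge map factors through $E_\infty^{0,*}\hookrightarrow E_2^{0,*}$). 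Because the Serre differentials are $H^*(B\Z/p;\F_p)$-linear and $E_2$ is generated as a module over this ring by its bottom row, the entire spectral sequence degenerates at $E_2$, giving (4). The main technical nugget of the whole argument is the algebraic bound $\dim H^j(\Z/p;M)\leq \dim_{\F_p} M$ together with its equality case; once this is in hand, everything else is a careful bookkeeping of dimensions aligning the localization isomorphism with the Serre spectral sequence.
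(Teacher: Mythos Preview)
The paper does not prove this theorem; it is quoted from tom Dieck's book and used as a black box. Your proposal is essentially the standard argument and is correct in outline, with two points worth tightening.

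First, you assert that $H^i(X^{\Z/p};\F_p)=0$ for $i>n$. This is itself a consequence of Smith theory rather than of the stated hypotheses; for establishing the inequality (\ref{eq: diguguaglianza equivariante}) you only need that $H^i(X^{\Z/p})$ vanishes above the (finite) dimension of the complex $X$, so it is cleaner to take $k$ sufficiently large rather than $k>n$ at that stage and recover the sharper vanishing a posteriori from the equivalences.

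Second, and more substantively, in $(1)\Rightarrow(4)$ your chain of inequalities, run at a single large $k$, only yields $\dim E_\infty^{(k)}=\dim E_2^{(k)}$ in \emph{high} total degree; you still have to propagate collapse down to low degrees. This is precisely where the $H^*(B\Z/p)$-module structure you already invoke for $(2)\Rightarrow(4)$ does the work: once the action is trivial, $E_2$ is free over $\F_p[c]$ (with $c\in H^2(B\Z/p)$), so if $d_r$ is the first nonzero differential then $E_r=E_2$ and $d_r(c^N x)=c^N d_r(x)\neq 0$ for all $N$, producing nonzero differentials in arbitrarily high total degree and contradicting the high-degree collapse. One terminological slip: $E_2$ is generated over $H^*(B\Z/p)$ by the column $E_2^{0,*}$, not by its ``bottom row''.
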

	\begin{oss}
		The previous Theorem holds as well if we replace $\Z/p$ with $S^1$ and take $\Q$ as field of coefficients. 
	\end{oss}
	
	\subsubsection{Labelled configuration spaces} \label{ref: labelled configuration spaces}
	\begin{defn}[B\"odigheimer, \cite{Bodigheimer}]
		Let $Y$ be a topological space and $(X,\ast)$ be a based CW-complex, not necessarily connected. The space of configurations in $Y$ with labels in $X$ is defined as 
		\[
		C(Y;X)\coloneqq\bigsqcup_{n\in\N}F_n(M)\times_{\Sigma_n} X^n/\sim
		\]
		where $(p_1,\dots,p_n;x_1,\dots,x_n)\sim (p_1,\dots,\hat{p}_i,\dots,p_n;x_1,\dots,\hat{x}_i,\dots,x_n)$ if $x_i=\ast$.
	\end{defn}
	\begin{es}
		$C(\C;S^0)$ is just the disjoint union $\bigsqcup_{n\in\N}C_n(\C)$. To ease the notation we sometimes abbreviate $C(\C;S^0)$ by $C(\C)$.
	\end{es}
	The homology of $C(\R^n;X)$ is known, thanks to the work of Cohen \cite{Cohen}. The idea is the following: $C(\R^n;X)$ is homotopy equivalent to the free $\mathcal{D}_n$-algebra on $X$, and Cohen showed that $H_*(C(\R^n;X);\F_p)$ can be described as a functor of $H_*(X;\F_p)$. For the purpose of this work it is enough to recall the results in the case $n=2$. 
	
	\begin{defn}
		Let $p$ be a prime number. Fix a basis $\mathcal{B}$ of $H_*(X;\F_p)/[\ast]$, where $[\ast]\in H_0(X;\F_p)$ is the class of the basepoint. We define a \textbf{basic bracket of weight k} inductively as follows:
		\begin{itemize}
			\item A basic bracket of weight $1$ is just an element $a\in\mathcal{B}$. Its degree is by definition the homological degree of $a$. Observe that any class of $H_*(X;\F_p)$ can be seen as a class in $H_*(C(\C;X);\F_p)$.
			\item By induction assume that the basic brackets of weight $j$ have been defined and equipped with a total ordering compatible with weight for $j<k$. Then a basic bracket of weight $k$ is a homology class $[a,b]\in H_*(C(\C;X);\F_p)$, where $[-,-]$ is the Browder bracket and $a,b$ are basic brackets such that:
			\begin{enumerate}
				\item $weight(a)+weight(b)=k$.
				\item $a<b$ and if $b=[c,d]$ then $c\leq a$.
			\end{enumerate}
			The degree of $[a,b]$ is by definition $deg(a)+deg(b)+1$.
			
		\end{itemize}
		In the case $p\neq2$ we also include as basic brackets classes of the form $[a,a]$, where $a$ is a basic bracket of even degree. 
	\end{defn}
	\begin{thm}[Cohen, \cite{Cohen}]\label{thm: omologia spazi di configurazione etichettati}
		Let $p$ be any prime. We will denote by $Q:H_q(C(\C;X);\F_p)\to H_{pq+p-1}(C(\C;X);\F_p)$ the first Dyer-Lashof operation (when $p$ is odd it acts only on classes of odd degree $q$). Then $H_*(C(\C;X);\F_p)$ has the following form:
		\begin{description}
			\item[$p=2$:] $H_*(C(\C;X);\F_2)$ is the free graded commutative algebra on classes $Q^i(x)$, where $Q^i$ denotes the $i$-th iteration of $Q$ and $x$ is a basic bracket.
			\item[$p\neq 2$:] $H_*(C(\C;X);\F_p)$ is the free graded commutative algebra on classes $Q^i(x)$ and $\beta Q^i(x)$, where $\beta$ is the Bockstein operator, $Q^i$ denotes the $i$-th iteration of $Q$ and $x$ is a basic bracket of odd degree.
		\end{description}
	\end{thm}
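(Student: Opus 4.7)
The plan is to identify $C(\C;X)$ with the free $\mathcal{D}_2$-algebra generated by the pointed space $X$ (the basepoint $\ast$ playing the role of the unit for the disjoint-union product of configurations). This is a version of May's approximation theorem for labelled configuration spaces; for connected $X$ it can be upgraded to a weak equivalence $C(\C;X)\simeq \Omega^2\Sigma^2 X$. Once this identification is in place, the mod-$p$ homology of $C(\C;X)$ can be computed via the Kudo-Araki-Dyer-Lashof theory of natural operations on $\mathcal{D}_2$-algebras.

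First I would enumerate the operations acting naturally on $H_*(C(\C;X);\F_p)$. From the homology of $\mathcal{D}_2(2)$ one obtains a graded-commutative associative product and the Browder bracket $[-,-]$ of degree $+1$, satisfying the graded Jacobi identity and the Poisson/Leibniz relation with respect to the product. From the $\Sigma_p$-equivariant structure of $\mathcal{D}_2(p)$ one obtains the first Dyer-Lashof operation $Q:H_q\to H_{pq+p-1}$, defined on all classes when $p=2$ and on odd-degree classes only when $p$ is odd, together with a Bockstein companion $\beta Q$ in the latter case. Applying a Hall-type basis argument to the free shifted Lie algebra on $\Tilde{H}_*(X;\F_p)$ under the Browder bracket then yields the inductively defined basic brackets.

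The heart of the proof is to show that the free graded-commutative algebra on the classes $Q^i(x)$ (and $\beta Q^i(x)$ for $p$ odd, with $x$ running over basic brackets of appropriate parity) maps isomorphically onto $H_*(C(\C;X);\F_p)$. Well-definedness of the map follows from the relations above. For bijectivity I would compare Poincaré series on both sides, using the Snaith stable splitting of $C(\C;X)$ to reduce the right hand side to a direct sum of $\Sigma_k$-equivariant homology groups $H_*(\mathcal{D}_2(k)_+\wedge_{\Sigma_k} X^{\wedge k};\F_p)$.

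The main obstacle is this last equivariant computation at odd primes, where one must separate carefully the contributions of $Q^i(x)$ from those of $\beta Q^i(x)$ and verify that no hidden relations appear beyond commutativity, Jacobi/Poisson, and the natural constraints on $Q$; the complete verification is the content of Cohen's original treatment in \cite{Cohen}, which we invoke here.
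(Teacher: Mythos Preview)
The paper does not actually prove this theorem: it is stated as a citation of Cohen \cite{Cohen}, with only the one-line remark that $C(\C;X)$ is homotopy equivalent to the free $\mathcal{D}_2$-algebra on $X$ and that therefore its mod-$p$ homology is a functor of $H_*(X;\F_p)$. Your proposal follows exactly this route---identify $C(\C;X)$ with the free $\mathcal{D}_2$-algebra, then appeal to Cohen's computation of the homology of such algebras---and simply expands the sketch with more detail (the Browder bracket, the Hall-type basis of basic brackets, the Snaith splitting) before likewise deferring the full verification to \cite{Cohen}. So your approach coincides with the paper's, and is in fact more informative than the paper's own treatment.
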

	\begin{corollario}
		Let $C(\C)\coloneqq\bigsqcup_{n\in\N}C_n(\C)$ be the disjoint union of all unordered configuration spaces of points in the complex plane. Then if $p$ is an odd prime
		\[
		H_*(C(\C);\F_p)=\F_p[\iota,\beta Q[\iota,\iota],\beta Q^2[\iota,\iota],\dots]\otimes\Lambda[[\iota,\iota],Q[\iota,\iota],Q^2[\iota,\iota],\dots]
		\]
		where $\iota$ is the generator of $H_0(C_1(\C))$. When $p=2$ we have
		\[
		H_*(C(\C);\F_2)=\F_2[\iota,Q\iota,Q^2\iota,\dots]
		\]
		
	\end{corollario}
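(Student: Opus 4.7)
The plan is to specialize Cohen's theorem (Theorem \ref{thm: omologia spazi di configurazione etichettati}) to the case $X = S^0$, using the identification $C(\C) = C(\C;S^0)$ recorded in the example just above. The reduced homology $H_*(S^0;\F_p)/[\ast]$ is one-dimensional, concentrated in degree zero and spanned by the fundamental class $\iota$, so the whole computation reduces to enumerating the basic brackets built from $\iota$ and then applying the Dyer-Lashof and Bockstein operations prescribed by the theorem.

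I would treat $p = 2$ first, as it is the cleanest case. The definition of basic bracket requires $a < b$ strictly, and the special rule allowing $[a,a]$ is excluded when $p = 2$. So once $\iota$ is the unique weight-one basic bracket, no bracket of weight $\geq 2$ can be basic. The set of basic brackets reduces to $\{\iota\}$, and Cohen's theorem delivers $H_*(C(\C);\F_2)$ as the free graded-commutative $\F_2$-algebra on $\{Q^i(\iota)\}_{i \geq 0}$, which is exactly the polynomial algebra claimed.

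For odd $p$, the class $\iota$ has even degree zero, so the special rule produces an additional basic bracket $[\iota,\iota]$ of weight two and odd degree one. To rule out further basic brackets I would invoke the graded Jacobi identity for the Browder bracket: with $x = y = z = \iota$ it gives $3[[\iota,\iota],\iota] = 0$, killing the candidate weight-three class whenever $p \neq 3$. Granting that no further basic brackets survive, the generators split by parity of degree: $\iota$ (even) contributes a polynomial factor, the classes $Q^i[\iota,\iota]$ for $i \geq 0$ (odd) contribute exterior factors, and the classes $\beta Q^i[\iota,\iota]$ for $i \geq 1$ (even) contribute polynomial factors. The absence of $\beta[\iota,\iota]$ from the generators reflects that $[\iota,\iota]$ lifts integrally through the $E_2$-structure, so $\beta[\iota,\iota] = 0$.

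The main obstacle I foresee is the $p = 3$ case of the bracket collapse above: the Jacobi identity alone does not kill $[\iota,[\iota,\iota]]$ in characteristic three, so one must either give a direct argument (for instance by identifying this class, up to a unit, with $\beta Q[\iota,\iota]$ via Nishida-type relations between $Q$, $\beta$ and the Browder bracket) or verify the final formula by a dimension count against Arnold's Poincar\'e series for $H_*(C_n(\C);\F_3)$. Once this delicate step is handled, assembling the pieces above yields the stated formulas uniformly in $p$.
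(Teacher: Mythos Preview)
Your approach is correct and is exactly what the paper intends: the corollary is stated without proof, as an immediate specialization of Cohen's theorem to $X=S^0$.

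The one loose end you flag --- the vanishing of $[\iota,[\iota,\iota]]$ at $p=3$ --- has a much simpler resolution than the ones you propose. Over a field of characteristic $3$, the identity $[x,[x,x]]=0$ is part of the \emph{definition} of a graded Lie algebra (just as $[x,x]=0$ is imposed in characteristic $2$); the Browder bracket on the suspension of an $E_2$-algebra satisfies it. The paper itself makes exactly this point later, in the proof of Proposition~\ref{prop: calcolo di delta su specifici monomi}: ``if $p\neq 3$ the Jacobi relation imply $[\iota,[\iota,\iota]]=0$, while for $p=3$ this iterated bracket is zero by definition.'' Alternatively, you can simply note that $[\iota,[\iota,\iota]]$ would live in $H_2(C_3(\C);\F_3)$, and a direct check of the Poincar\'e polynomial of $C_3(\C)$ shows this group vanishes. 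No Nishida-type relations or global dimension counts are needed. Once weight three is handled, higher weights collapse automatically: every candidate basic bracket of weight $\geq 4$ would contain $[\iota,[\iota,\iota]]$ or $[[\iota,\iota],[\iota,\iota]]$ as a sub-bracket, and both are zero.
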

	\begin{oss}\label{oss:notazione per omologia}
		In what follows we will adopt the following notation:
		\begin{align*}
			&u\coloneqq[\iota,\iota]\\
			&\beta_i\coloneqq \beta Q^i[\iota,\iota]\\
			&\alpha_i\coloneqq Q^i[\iota,\iota]  
		\end{align*}
		Moreover, the following table will be useful in the next section:
		\[
		\begin{tabular}{ccc}
			
			\toprule
			Homology class  & Number of points & Degree\\
			\midrule
			$\iota$ & $1$ & $0$ \\
			$u$ & $2$ & $1$ \\
			$\alpha_i$ & $2p^i$ & $2p^i-1$ \\
			$\beta_i$ & $2p^i$ & $2p^i-2$\\
			\bottomrule
		\end{tabular}
		\]
		
	\end{oss}
	
	\subsection{Computation of $H_*^{\Z/p}(C_n(\C);\F_p)$ when $n=0,1$ mod $p$}\label{sec: omologia equivariante spazio configurazioni rispetto a gruppo ciclico}
	
	In this section we compute $H_*^{\Z/p}(C_n(\C);\F_p)$ when $p$ is a prime that divides $n$ or $n-1$. Here we are considering $\Z/p$ as the subgroup of $p$-th roots of unity inside $S^1$, so its generator acts on $C_n(\C)$ by the rotation of $2\pi/p$. Some of the statements of this section are stated with the assumption that $p$ is an odd prime, but similar statements holds for $p=2$ with minor modifications. A different proof for the case $p=2$ will be included in Section \ref{sec: operations for odd degree classes} (see Corollary \ref{thm: collasso sequenza spettrale nel caso p=2}).
	
	\begin{thm}\label{thm:degeneration of a spectral sequence}
		Let $p$ be a prime, $n\in\N$ such that $p|n$ or $p|n-1$. Then 
		\[
		H_*^{\Z/p}(C_n(\C);\F_p)\cong H_*(C_n(\C);\F_p)\otimes H_*(B\Z/p;\F_p)
		\]
	\end{thm}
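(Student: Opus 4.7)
The plan is to apply Theorem~\ref{thm:tom Dieck degeneration theorem} and show that the Serre spectral sequence of the Borel fibration $C_n(\C)\hookrightarrow(C_n(\C))_{\Z/p}\to B\Z/p$ degenerates at $E_2$. The $\Z/p$-action on $H_*(C_n(\C);\F_p)$ is trivial because the generator $\zeta=e^{2\pi i/p}$ is $S^1$-homotopic to the identity through the path of rotations; by the equivalence of conditions $(1)$ and $(4)$ in that theorem it is enough to verify the Smith equality
\[
\dim_{\F_p}H^*(C_n(\C)^{\Z/p};\F_p)=\dim_{\F_p}H^*(C_n(\C);\F_p).
\]
The fixed set is easy to identify: a $\zeta$-invariant configuration is a disjoint union of $\Z/p$-orbits, each either the singleton $\{0\}$ or a free orbit $\{z,\zeta z,\ldots,\zeta^{p-1}z\}$ with $z\in\C^*$, and the free orbits are parametrized by $\C^*/(\Z/p)\cong\C^*$ via $z\mapsto z^p$. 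Writing $n=pm+r$ with $r\in\{0,1\}$ gives $C_n(\C)^{\Z/p}\cong C_m(\C^*)$.

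To compute $\dim H^*(C_m(\C^*);\F_p)$ I would use the Gysin long exact sequence of the oriented codimension-$2$ closed embedding $\iota_0\colon C_{m-1}(\C^*)\hookrightarrow C_m(\C)$ that adjoins the origin; its open complement is precisely $C_m(\C^*)$. The crucial technical step is that this sequence splits into short exact pieces, equivalent (by the standard LES dimension count) to the vanishing of the Gysin pushforward $\iota_{0,*}\colon H^*(C_{m-1}(\C^*))\to H^{*+2}(C_m(\C))$. A natural approach uses the triviality of the normal bundle of $\iota_0$ (its fibre is $T_0\C$): $\iota_{0,*}(1)=e(\nu)=0$, and the projection formula then gives $\iota_{0,*}(\iota_0^*\eta)=0$ for every $\eta\in H^*(C_m(\C))$; extending the vanishing to all of $H^*(C_{m-1}(\C^*))$ will be the main technical work, and should follow from Cohen's description (Theorem~\ref{thm: omologia spazi di configurazione etichettati}) of $H_*(C(\C);\F_p)$ together with the Pontryagin product structure. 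Granting this, the recursion $\dim H^*(C_m(\C^*))=\dim H^*(C_m(\C))+\dim H^*(C_{m-1}(\C^*))$ together with $\dim H^*(C_0(\C^*);\F_p)=1$ yields the generating function identity
\[
\sum_{m\geq 0}\dim H^*(C_m(\C^*);\F_p)\,u^m=\frac{f_p(u)}{1-u},\qquad f_p(u):=\sum_{n\geq 0}\dim H^*(C_n(\C);\F_p)\,u^n.
\]

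To close I would check $[u^{pm}]f_p(u)=[u^{pm+1}]f_p(u)=[u^m]\bigl(f_p(u)/(1-u)\bigr)$. Cohen's formula presents $f_p(u)$ as an explicit infinite product in which only finitely many factors come from generators whose weight is not divisible by $p$ (the $\iota$-factor always, plus the weight-$2$ factor when $p$ is odd); every other factor involves only powers $u^{2p^i}$ with $i\geq 1$ and is therefore invariant under $u\mapsto\omega u$ whenever $\omega^p=1$. Averaging $f_p(\omega u)$ over the $p$-th roots of unity using the elementary identities $\sum_{\omega^p=1}\omega^k/(1-\omega t)=p\,t^{(-k)\bmod p}/(1-t^p)$ then yields the desired equalities after the substitution $t^p=u$. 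The main technical obstacle I expect is the vanishing of $\iota_{0,*}$ on all of $H^*(C_{m-1}(\C^*))$; the rest is bookkeeping with generating functions and an application of Theorem~\ref{thm:tom Dieck degeneration theorem}.
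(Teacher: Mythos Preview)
Your overall strategy coincides with the paper's: apply Theorem~\ref{thm:tom Dieck degeneration theorem} by verifying the Smith equality, after identifying $C_n(\C)^{\Z/p}\cong C_m(\C^*)$ for $n=pm$ or $n=pm+1$.  The differences are in the two technical ingredients.

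For the dimension of $H^*(C_m(\C^*);\F_p)$, the paper computes it \emph{directly}: $C_m(\C^*)$ is the component of the bicoloured configuration space $C(\C;S^0\vee S^0)$ with $m$ black points and one white point, and Cohen's Theorem~\ref{thm: omologia spazi di configurazione etichettati} yields the additive splitting of Corollary~\ref{cor:omologia nelle configurazioni nel piano bucato}, hence $\dim H^*(C_m(\C^*))=\sum_{j\le m}d(j)$.  Your Gysin approach gives the same recursion, and the ``main technical obstacle'' you flag is in fact trivial: the restriction $H^*(C_m(\C))\to H^*(C_m(\C^*))$ is split injective, because the inclusion of a small open disc $D\subset\C^*$ gives $C_m(\C)\simeq C_m(D)\hookrightarrow C_m(\C^*)\hookrightarrow C_m(\C)$ with composite homotopic to the identity.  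By exactness this forces $\iota_{0,*}=0$; no appeal to Cohen's description is needed here.

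For the identity $d(pm)=d(pm+1)=\sum_{j\le m}d(j)$, the paper argues by an explicit bijection of monomial bases (Propositions~\ref{prop:identità tra le dimensioni} and~\ref{prop:classification of monomials}), whereas your roots-of-unity averaging is cleaner and avoids that case analysis.  Concretely, writing $f_p(t)=\frac{1+t^2}{1-t}\,g(t)$ with $g(t)$ a series in $t^{2p}$, one finds $\frac{1}{p}\sum_{\omega^p=1}\frac{1+\omega^2t^2}{1-\omega t}=\frac{1+t^p}{1-t^p}$, and since $g(t)=h(t^p)$ with $h(s)=\frac{1+s^2}{1-s^2}\prod_{i\ge1}\frac{1+s^{2p^i}}{1-s^{2p^i}}=\frac{f_p(s)}{1+s}$, this gives $\sum_m d(pm)s^m=\frac{f_p(s)}{1-s}$; the case $n\equiv 1$ is analogous.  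So your proposal is correct once the easy observation about the homotopy section is inserted, and it trades the paper's combinatorics of monomials for a short generating-function computation.
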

	\begin{proof}
		Consider the homological spectral sequence associated to the fibration $C_n(\C)\hookrightarrow C_n(\C)_{\Z/p}\to B\Z/p$. Since $\Z/p$ acts by rotations on $C_n(\C)$, the monodromy action is trivial. Therefore
		\[
		E^2_{p,q}\cong H_p(C_n(\C);\F_p)\otimes H_q(B\Z/p;\F_p)
		\]
		We will see that
		\begin{equation}\label{eq: configuration spaces}
			dim_{\F_p}\bigoplus_{k\in\N}H^k(C_n(\C)^{\Z/p})= dim_{\F_p}\bigoplus_{k\in\N}H^k(C_n(\C))  
		\end{equation}
		So the result follows applying Theorem \ref{thm:tom Dieck degeneration theorem}.
	\end{proof}
	
	Now we focus in proving the equality \ref{eq: configuration spaces}. For the moment we restrict to the case $n=pq$, then we will extend the result to the case $n=pq+1$.
	\begin{lem}\label{lem: Z/p punti fissi}
		Let $n=pq$ or $n=pq+1$. Then the fixed points $C_n(\C)^{\Z/p}$ are homeomorphic to $C_q(\C^*)$.
	\end{lem}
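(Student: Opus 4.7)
The plan is to exhibit an explicit homeomorphism between $C_n(\C)^{\Z/p}$ and $C_q(\C^*)$ via the $p$-th power map, after first describing the structure of $\Z/p$-invariant configurations.

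The first step is to analyze what a $\Z/p$-fixed unordered configuration of $n$ points in $\C$ looks like. A point in $\C$ has stabilizer either $\Z/p$ (if it is the origin) or trivial (otherwise); hence every $\Z/p$-invariant finite subset of $\C$ decomposes as a disjoint union of free orbits of size $p$ living in $\C^*$, together with possibly the single point $\{0\}$. I would then split into the two cases: when $n=pq$, the origin \emph{cannot} belong to such a configuration since $pq-1$ is not divisible by $p$, so we have exactly $q$ free $\Z/p$-orbits in $\C^*$; when $n=pq+1$, the origin \emph{must} belong, and the remaining $pq$ points form $q$ free $\Z/p$-orbits in $\C^*$.

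The second step is to define the map. Given an invariant configuration $C\in C_n(\C)^{\Z/p}$, consider $C\cap\C^*$ and apply the $p$-th power map $\varphi\colon\C^*\to\C^*$, $z\mapsto z^p$. Since $\varphi$ is the quotient by the $\Z/p$-rotation action and the orbits in $C\cap\C^*$ are free, $\varphi(C\cap\C^*)$ is an unordered configuration of exactly $q$ distinct points in $\C^*$. This defines
\[
\Phi\colon C_n(\C)^{\Z/p}\longrightarrow C_q(\C^*),\qquad C\longmapsto\varphi(C\cap\C^*).
\]
The inverse is given by sending $\{w_1,\dots,w_q\}\in C_q(\C^*)$ to $\varphi^{-1}(\{w_1,\dots,w_q\})$ when $n=pq$, and to $\{0\}\cup\varphi^{-1}(\{w_1,\dots,w_q\})$ when $n=pq+1$; this is well-defined because the $p$-th roots of distinct nonzero complex numbers form disjoint $\Z/p$-orbits, producing $pq$ distinct nonzero points.

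The final step is continuity of $\Phi$ and $\Phi^{-1}$, which is routine: $\Phi$ is the restriction of the map induced by $\varphi$ on configuration spaces, and $\Phi^{-1}$ is continuous because the roots of a polynomial depend continuously on its coefficients away from the discriminant locus (and on $C_q(\C^*)$ we are precisely away from that locus). I do not anticipate any real obstacle here; the only thing that requires a moment of care is the case analysis forcing the origin to be present exactly when $n\equiv 1\pmod p$, which is what makes both cases $n=pq$ and $n=pq+1$ land on the same target $C_q(\C^*)$.
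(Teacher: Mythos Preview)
Your proof is correct and follows essentially the same strategy as the paper: identify a $\Z/p$-invariant configuration as a union of $q$ free orbits in $\C^*$ (together with the origin when $n=pq+1$), and then pass to the quotient $\C^*/(\Z/p)\cong\C^*$. The only cosmetic difference is that the paper realizes this quotient via a fundamental-domain sector, whereas you use the $p$-th power map; your version is arguably cleaner and also spells out the case $n=pq+1$ that the paper leaves as ``similar.''
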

	\begin{proof}
		Let us prove the statement when $n=pq$, the other case in similar. Let us denote by $\zeta\coloneqq e^{i2\pi/p}$ the generator of $\Z/p$. Consider the quotient space
		\[
		H\coloneqq\{z\in\C^*\mid arg(z)\in[0,2\pi/p]\}/\sim
		\]
		where $\sim$ identifies a point $z\in\{z\in\C^*\mid arg(z)=0\}$ with $\zeta z$. So $H$ is homeomorphic to $\C^*$. Now observe that any configuration in $C_{n}(\C)^{\Z/p}$ is of the form $\{z_1,\zeta z_1,\dots\zeta^{p-1}z_1,\dots, z_q,\zeta z_q,\dots\zeta^{p-1}z_q\}$, where $z_1,\dots,z_q$ are distinct points in $\{z\in\C^*\mid arg(z)\in[0,2\pi/p)\}$. The association $\{z_1,\zeta z_1,\dots\zeta^{p-1}z_1,\dots, z_q,\zeta z_q,\dots\zeta^{p-1}z_q\}\mapsto\{z_1,\dots,z_q\}$ defines a continuous map 
		\[
		f: C_{n}(\C)^{\Z/p}\to C_q(H)
		\]
		Conversely, if we have a configuration $\{z_1,\dots,z_q\}\in  C_q(H)$, we can produce a configuration of $C_n(\C)^{\Z/p}$ by taking the $\Z/p$-orbits of every point. More precisely, the association $\{z_1,\dots,z_q\}\mapsto\{z_1,\zeta z_1,\dots\zeta^{p-1}z_1,\dots, z_q,\zeta z_q,\dots\zeta^{p-1}z_q\}$ defines a continuous function $C_q(H)\to C_n(\C)^{\Z/p}$, which is the inverse of $f$. See Figure \ref{fig:homeomorfismo dello spazio dei punti fissi} for a pictorial description of $f$.
	\end{proof}
	\begin{figure}
		\centering

		\tikzset{every picture/.style={line width=0.75pt}} 
		
		\begin{tikzpicture}[x=0.75pt,y=0.75pt,yscale=-1,xscale=1]
			
			\draw    (140.15,163.71) -- (234.79,163.71) ;
			\draw    (140.15,163.71) -- (169.4,73.71) ;
			\draw  [fill={rgb, 255:red, 0; green, 0; blue, 0 }  ,fill opacity=1 ] (174.27,144) .. controls (174.27,143.25) and (174.88,142.64) .. (175.63,142.64) .. controls (176.39,142.64) and (177,143.25) .. (177,144) .. controls (177,144.75) and (176.39,145.36) .. (175.63,145.36) .. controls (174.88,145.36) and (174.27,144.75) .. (174.27,144) -- cycle ;
			\draw  [fill={rgb, 255:red, 0; green, 0; blue, 0 }  ,fill opacity=1 ] (194.01,148.23) .. controls (194.01,147.48) and (194.62,146.87) .. (195.37,146.87) .. controls (196.12,146.87) and (196.73,147.48) .. (196.73,148.23) .. controls (196.73,148.98) and (196.12,149.59) .. (195.37,149.59) .. controls (194.62,149.59) and (194.01,148.98) .. (194.01,148.23) -- cycle ;
			\draw  [fill={rgb, 255:red, 0; green, 0; blue, 0 }  ,fill opacity=1 ] (177.09,111.58) .. controls (177.09,110.83) and (177.7,110.22) .. (178.45,110.22) .. controls (179.2,110.22) and (179.81,110.83) .. (179.81,111.58) .. controls (179.81,112.33) and (179.2,112.94) .. (178.45,112.94) .. controls (177.7,112.94) and (177.09,112.33) .. (177.09,111.58) -- cycle ;
			\draw    (139.99,163.6) -- (169.03,253.68) ;
			\draw  [fill={rgb, 255:red, 0; green, 0; blue, 0 }  ,fill opacity=1 ] (169.22,190.03) .. controls (169.93,189.8) and (170.7,190.19) .. (170.93,190.91) .. controls (171.16,191.62) and (170.77,192.39) .. (170.05,192.62) .. controls (169.34,192.85) and (168.57,192.46) .. (168.34,191.74) .. controls (168.11,191.03) and (168.5,190.26) .. (169.22,190.03) -- cycle ;
			\draw  [fill={rgb, 255:red, 0; green, 0; blue, 0 }  ,fill opacity=1 ] (171.25,210.11) .. controls (171.97,209.88) and (172.73,210.27) .. (172.96,210.99) .. controls (173.19,211.7) and (172.8,212.47) .. (172.09,212.7) .. controls (171.37,212.93) and (170.6,212.54) .. (170.37,211.82) .. controls (170.14,211.11) and (170.53,210.34) .. (171.25,210.11) -- cycle ;
			\draw  [fill={rgb, 255:red, 0; green, 0; blue, 0 }  ,fill opacity=1 ] (200.94,182.76) .. controls (201.66,182.53) and (202.42,182.92) .. (202.66,183.64) .. controls (202.89,184.36) and (202.49,185.12) .. (201.78,185.35) .. controls (201.06,185.59) and (200.29,185.19) .. (200.06,184.48) .. controls (199.83,183.76) and (200.23,182.99) .. (200.94,182.76) -- cycle ;
			\draw    (140.11,163.96) -- (63,218.83) ;
			\draw  [fill={rgb, 255:red, 0; green, 0; blue, 0 }  ,fill opacity=1 ] (123.74,199.8) .. controls (124.18,200.41) and (124.03,201.26) .. (123.42,201.7) .. controls (122.81,202.13) and (121.96,201.99) .. (121.52,201.38) .. controls (121.09,200.77) and (121.23,199.92) .. (121.84,199.48) .. controls (122.45,199.04) and (123.3,199.19) .. (123.74,199.8) -- cycle ;
			\draw  [fill={rgb, 255:red, 0; green, 0; blue, 0 }  ,fill opacity=1 ] (105.21,207.8) .. controls (105.65,208.41) and (105.5,209.26) .. (104.89,209.7) .. controls (104.28,210.13) and (103.43,209.99) .. (102.99,209.38) .. controls (102.55,208.76) and (102.7,207.91) .. (103.31,207.48) .. controls (103.92,207.04) and (104.77,207.19) .. (105.21,207.8) -- cycle ;
			\draw  [fill={rgb, 255:red, 0; green, 0; blue, 0 }  ,fill opacity=1 ] (140.24,227.85) .. controls (140.68,228.46) and (140.54,229.31) .. (139.92,229.75) .. controls (139.31,230.18) and (138.46,230.04) .. (138.02,229.43) .. controls (137.59,228.82) and (137.73,227.97) .. (138.34,227.53) .. controls (138.96,227.09) and (139.81,227.24) .. (140.24,227.85) -- cycle ;
			\draw    (139.99,163.6) -- (63.42,107.98) ;
			\draw  [fill={rgb, 255:red, 0; green, 0; blue, 0 }  ,fill opacity=1 ] (131.62,125.59) .. controls (130.91,125.36) and (130.51,124.59) .. (130.75,123.88) .. controls (130.98,123.16) and (131.75,122.77) .. (132.46,123) .. controls (133.18,123.24) and (133.57,124) .. (133.34,124.72) .. controls (133.1,125.43) and (132.34,125.83) .. (131.62,125.59) -- cycle ;
			\draw  [fill={rgb, 255:red, 0; green, 0; blue, 0 }  ,fill opacity=1 ] (141.74,108.13) .. controls (141.03,107.9) and (140.63,107.13) .. (140.87,106.42) .. controls (141.1,105.7) and (141.87,105.31) .. (142.58,105.54) .. controls (143.3,105.77) and (143.69,106.54) .. (143.46,107.26) .. controls (143.22,107.97) and (142.46,108.36) .. (141.74,108.13) -- cycle ;
			\draw  [fill={rgb, 255:red, 0; green, 0; blue, 0 }  ,fill opacity=1 ] (101.66,112.89) .. controls (100.94,112.66) and (100.55,111.89) .. (100.78,111.18) .. controls (101.01,110.46) and (101.78,110.07) .. (102.5,110.3) .. controls (103.21,110.54) and (103.6,111.3) .. (103.37,112.02) .. controls (103.14,112.73) and (102.37,113.13) .. (101.66,112.89) -- cycle ;
			\draw  [fill={rgb, 255:red, 0; green, 0; blue, 0 }  ,fill opacity=1 ] (100.82,159.88) .. controls (100.37,160.48) and (99.52,160.61) .. (98.91,160.17) .. controls (98.31,159.73) and (98.17,158.88) .. (98.62,158.27) .. controls (99.06,157.66) and (99.91,157.53) .. (100.52,157.97) .. controls (101.13,158.42) and (101.26,159.27) .. (100.82,159.88) -- cycle ;
			\draw  [fill={rgb, 255:red, 0; green, 0; blue, 0 }  ,fill opacity=1 ] (87.38,144.81) .. controls (86.94,145.42) and (86.09,145.55) .. (85.48,145.11) .. controls (84.87,144.67) and (84.74,143.81) .. (85.18,143.21) .. controls (85.63,142.6) and (86.48,142.47) .. (87.09,142.91) .. controls (87.69,143.35) and (87.83,144.21) .. (87.38,144.81) -- cycle ;
			\draw  [fill={rgb, 255:red, 0; green, 0; blue, 0 }  ,fill opacity=1 ] (79.41,184.38) .. controls (78.96,184.99) and (78.11,185.12) .. (77.5,184.68) .. controls (76.9,184.24) and (76.76,183.38) .. (77.21,182.78) .. controls (77.65,182.17) and (78.5,182.04) .. (79.11,182.48) .. controls (79.72,182.93) and (79.85,183.78) .. (79.41,184.38) -- cycle ;
			\draw    (264,166) -- (330.73,166) ;
			\draw [shift={(332.73,166)}, rotate = 180] [color={rgb, 255:red, 0; green, 0; blue, 0 }  ][line width=0.75]    (10.93,-3.29) .. controls (6.95,-1.4) and (3.31,-0.3) .. (0,0) .. controls (3.31,0.3) and (6.95,1.4) .. (10.93,3.29)   ;
			\draw  [dash pattern={on 0.84pt off 2.51pt}] (39.73,102.88) .. controls (39.73,80.68) and (57.73,62.68) .. (79.93,62.68) -- (210.53,62.68) .. controls (232.73,62.68) and (250.73,80.68) .. (250.73,102.88) -- (250.73,223.48) .. controls (250.73,245.68) and (232.73,263.68) .. (210.53,263.68) -- (79.93,263.68) .. controls (57.73,263.68) and (39.73,245.68) .. (39.73,223.48) -- cycle ;
			\draw    (360.46,206.41) -- (494.73,206.41) ;
			\draw    (360.46,206.41) -- (401.95,78.71) ;
			\draw  [fill={rgb, 255:red, 0; green, 0; blue, 0 }  ,fill opacity=1 ] (408.87,178.44) .. controls (408.87,177.37) and (409.73,176.51) .. (410.8,176.51) .. controls (411.87,176.51) and (412.73,177.37) .. (412.73,178.44) .. controls (412.73,179.5) and (411.87,180.37) .. (410.8,180.37) .. controls (409.73,180.37) and (408.87,179.5) .. (408.87,178.44) -- cycle ;
			\draw  [fill={rgb, 255:red, 0; green, 0; blue, 0 }  ,fill opacity=1 ] (436.87,184.44) .. controls (436.87,183.37) and (437.73,182.51) .. (438.8,182.51) .. controls (439.87,182.51) and (440.73,183.37) .. (440.73,184.44) .. controls (440.73,185.5) and (439.87,186.37) .. (438.8,186.37) .. controls (437.73,186.37) and (436.87,185.5) .. (436.87,184.44) -- cycle ;
			\draw  [fill={rgb, 255:red, 0; green, 0; blue, 0 }  ,fill opacity=1 ] (412.87,132.44) .. controls (412.87,131.37) and (413.73,130.51) .. (414.8,130.51) .. controls (415.87,130.51) and (416.73,131.37) .. (416.73,132.44) .. controls (416.73,133.5) and (415.87,134.37) .. (414.8,134.37) .. controls (413.73,134.37) and (412.87,133.5) .. (412.87,132.44) -- cycle ;
			\draw  [fill={rgb, 255:red, 255; green, 255; blue, 255 }  ,fill opacity=1 ] (357.37,206.41) .. controls (357.37,204.7) and (358.76,203.32) .. (360.46,203.32) .. controls (362.17,203.32) and (363.55,204.7) .. (363.55,206.41) .. controls (363.55,208.11) and (362.17,209.49) .. (360.46,209.49) .. controls (358.76,209.49) and (357.37,208.11) .. (357.37,206.41) -- cycle ;
			\draw    (494.73,206.41) -- (429.6,206.41) ;
			\draw [shift={(427.6,206.41)}, rotate = 360] [color={rgb, 255:red, 0; green, 0; blue, 0 }  ][line width=0.75]    (10.93,-3.29) .. controls (6.95,-1.4) and (3.31,-0.3) .. (0,0) .. controls (3.31,0.3) and (6.95,1.4) .. (10.93,3.29)   ;
			\draw    (401.95,78.71) -- (381.83,140.65) ;
			\draw [shift={(381.21,142.56)}, rotate = 288] [color={rgb, 255:red, 0; green, 0; blue, 0 }  ][line width=0.75]    (10.93,-3.29) .. controls (6.95,-1.4) and (3.31,-0.3) .. (0,0) .. controls (3.31,0.3) and (6.95,1.4) .. (10.93,3.29)   ;
			
			\draw (292,143) node [anchor=north west][inner sep=0.75pt]   [align=left] {$\displaystyle f$};

		\end{tikzpicture}
		
		\caption{This picture shows how the homeomorphism $f:C_{15}(\C)^{\Z/5}\to C_3(\C^*)$ works. }
		\label{fig:homeomorfismo dello spazio dei punti fissi}
	\end{figure}
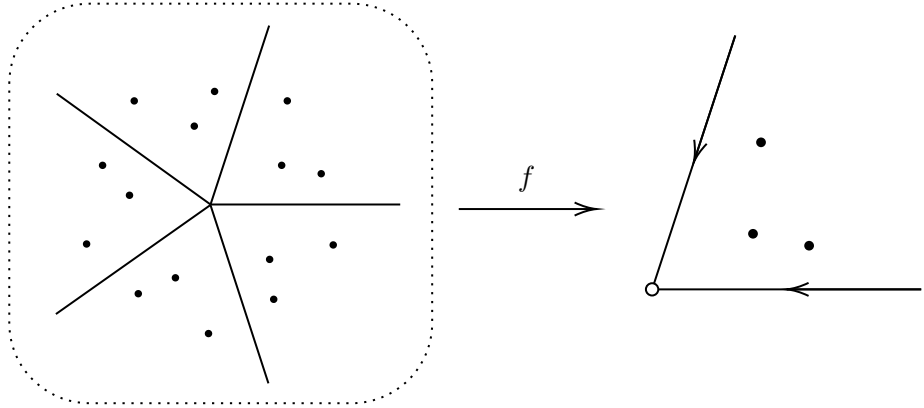
	\begin{oss}
		$C_q(\C^*)$ is homotopy equivalent to the configuration space of $q$ black particles and one white particle in the plane. Therefore $H_*(C_q(\C^*);\F_p)$ will be the subspace of $H_*(C(\C,S^0\vee S^0);\F_p)$ spanned by those classes that involve only $q$ black particles and one white particle. Let us denote by $a$ (resp $b$) the class in $H_0(S^0\vee S^0)$ which represent a black particle (resp. a white particle). Then $H_*(C(\C,S^0\vee S^0);\F_p)$ can be computed using Theorem \ref{thm: omologia spazi di configurazione etichettati}. The Lemma stated below just identifies explicitly $H_*(C_q(\C^*);\F_p)$ as a subspace of $H_*(C(\C,S^0\vee S^0);\F_p)$. 
	\end{oss}
	
	\begin{lem}
		Let $p$ be any prime and use $\F_p$ coefficients for homology. 
		Consider the space $C(\C^*)\coloneqq\bigsqcup_{n\in\N}C_n(\C^*)$. Then we have:
		\[
		H_*(C(\C^*))=b\cdot H_*(C(\C))+[a,b]\cdot  H_*(C(\C))+[a,[a,b]]\cdot  H_*(C(\C))+\dots
		\]
	\end{lem}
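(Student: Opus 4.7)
The plan is to realize $H_*(C(\C^*))$ as the subspace of $H_*(C(\C;S^0\vee S^0))$ consisting of those classes that involve exactly one white particle, and then to make this subspace explicit using Cohen's description (Theorem \ref{thm: omologia spazi di configurazione etichettati}). The identification of $H_*(C(\C^*))$ with such a subspace is provided by the preceding remark, via the homotopy equivalence between $C_q(\C^*)$ and the space of configurations of $q$ black and one white particle in the plane; summing over $q$ gives the assertion that $H_*(C(\C^*))$ is the subspace of $H_*(C(\C;S^0\vee S^0))$ of $b$-count equal to $1$.

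After this reduction the computation proceeds by a monomial-counting argument. The free graded commutative algebra $H_*(C(\C;S^0\vee S^0))$ is generated by basic brackets in $a,b$ and their Dyer--Lashof images $Q^I x$ (and $\beta Q^I x$ when $p$ is odd). The $b$-count is additive over products of generators and is multiplied by $p$ by each application of $Q$. A monomial therefore has $b$-count equal to $1$ if and only if it is the product of a monomial in the $a$-only generators, which by definition lies in $H_*(C(\C))$, and a single basic bracket containing exactly one $b$; no non-trivial Dyer--Lashof operation can appear on this distinguished factor, since otherwise its $b$-count would be a multiple of $p\ge 2$.

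It then suffices to check that the basic brackets with exactly one $b$ are, up to scalars, the nested left brackets $b,\,[a,b],\,[a,[a,b]],\,[a,[a,[a,b]]],\dots$. In the free graded Lie algebra generated by $a$ and $b$ under the Browder bracket, the component in multidegree $(n,1)$ is one-dimensional for every $n\geq 0$, spanned by the nested left bracket; the graded Jacobi identity together with graded antisymmetry reduces any other bracketing, for instance $[b,[a,a]]$ or $[[a,b],a]$, to a scalar multiple of such a nested expression. The \emph{doubled} basic brackets $[x,x]$ introduced for $x$ of even degree do not contribute to the one-$b$ part, since doubling multiplies the $b$-count by $2$. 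Putting these observations together yields the claimed generation of $H_*(C(\C^*))$ by the submodules $[a,[a,\dots,[a,b]\dots]]\cdot H_*(C(\C))$.

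The main obstacle is the final identification, namely verifying that the one-$b$ part of the free shifted Lie algebra on $\{a,b\}$ coincides with the linear span of the nested left brackets. This is essentially the classical computation of the multidegree $(n,1)$ component of the free Lie algebra on two generators, adapted to the odd-degree bracket setting of Browder; it can be done by induction on the weight, using graded Jacobi and graded antisymmetry to push all $a$'s to the left of the distinguished $b$.
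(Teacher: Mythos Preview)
Your argument is correct and follows essentially the same route as the paper: both identify $H_*(C(\C^*))$ with the one-$b$ summand of $H_*(C(\C;S^0\vee S^0))$ via the preceding remark, then use Cohen's description to see that a monomial with exactly one $b$ must be a nested bracket $[a,[a,\dots,[a,b]\dots]]$ times a class in $H_*(C(\C))$, since applying $Q$ multiplies the $b$-count by $p$. You supply more detail than the paper on two points it leaves implicit---why the doubled brackets $[x,x]$ cannot contribute, and why the one-$b$ component of the free shifted Lie algebra is spanned by the left-nested brackets---but the underlying strategy is identical.
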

	\begin{proof}
		We use Theorem \ref{thm: omologia spazi di configurazione etichettati} to compute $H_*(C(\C;S^0\vee S^0);\F_p)$ and then we identify $H_*(C(\C^*);\F_p)$ as the subspace spanned by classes involving exactly one white particle. Let us denote by $a$ (resp. $b$) the class in $H_0(S^0\vee S^0)$ which represent a black particle (resp. a white particle). The basic brackets involving only one white particle turns out to be $b$, $[a,b]$, $[a,[a,b]]$, $[a,[a,[a,b]]]$ etc. Now if $x$ is one of these brackets, $Q(x)$ will be a class containing $p$ white particles. So the classes of $H_*(C(\C^*);\F_p)$ are of the form $x\cdot y$, where $y\in H_*(C(\C);\F_p)$ and $x$ is one of the basic bracket listed above, and this proves the statement. 
	\end{proof}
	\begin{corollario}\label{cor:omologia nelle configurazioni nel piano bucato} Let $q$ be any natural number and take $\F_p$-coefficients for homology, for $p$ a fixed prime. Then
		\[
		H_*(C_q(\C^*))=b\cdot H_*(C_q(\C))+[a,b]\cdot  H_{*-1}(C_{q-1}(\C))+\dots+ [a,[a,\dots[a,b]]]\cdot H_{0}(C_{0}(\C))
		\]
	\end{corollario}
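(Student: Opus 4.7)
The plan is to derive this corollary as a direct consequence of the preceding Lemma by carefully bookkeeping the number of particles and the homological degree in each summand. The Lemma gives a decomposition of $H_*(C(\C^*);\F_p)$ as a sum over $k\geq 0$ of terms of the form $\mathrm{ad}_a^k(b)\cdot H_*(C(\C);\F_p)$, where $\mathrm{ad}_a^k(b)$ denotes the iterated Browder bracket $[a,[a,\dots[a,b]]]$ with $k$ copies of $a$. What remains is simply to restrict this decomposition to the connected component indexed by $q$, i.e.\ to configurations with exactly $q$ black particles and $1$ white particle.

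The key observation is that the iterated bracket $\mathrm{ad}_a^k(b)$ involves exactly $k$ black particles and $1$ white particle, and sits in homological degree $k$. The particle count follows because the Browder bracket of a class on $n$ particles and a class on $m$ particles lives on $n+m$ particles, so by induction on $k$ the bracket $\mathrm{ad}_a^k(b)$ involves $k+1$ particles of which exactly one is white. The degree count follows from the rule $\deg[x,y]=\deg(x)+\deg(y)+1$ recalled in the preliminaries: since $|a|=|b|=0$, each additional bracket raises the degree by exactly $1$, yielding $|\mathrm{ad}_a^k(b)|=k$.

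Consequently, a product $\mathrm{ad}_a^k(b)\cdot y$ belongs to the component $C_q(\C^*)$ precisely when $y\in H_*(C_{q-k}(\C))$, which forces $0\leq k\leq q$ since there is no configuration space with a negative number of particles. Tracking the total homological degree, the summand with $k$ copies of $a$ contributes $\mathrm{ad}_a^k(b)\cdot H_{*-k}(C_{q-k}(\C))$. Summing over $k=0,1,\dots,q$ gives exactly the decomposition in the statement.

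There is no real obstacle here beyond the indexing: the substantive work was already carried out in the Lemma (the computation of $H_*(C(\C^*);\F_p)$ as a subspace of $H_*(C(\C;S^0\vee S^0);\F_p)$), and the corollary is obtained by selecting the connected component corresponding to $q$ black particles while using the degree and particle-count formulas for the iterated Browder brackets.
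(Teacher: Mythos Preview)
Your proposal is correct and is exactly the intended derivation: the paper states the corollary without proof, as an immediate consequence of the preceding Lemma obtained by restricting the decomposition of $H_*(C(\C^*);\F_p)$ to the component with $q$ black particles, and your argument spells out precisely this restriction together with the particle-count and degree bookkeeping for the iterated brackets $\mathrm{ad}_a^k(b)$.
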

	\begin{defn}
		Let us denote by $d(q)$ the dimension of $H_*(C_q(\C);\F_p)$ as $\F_p$-vector space.
	\end{defn}
	Lemma \ref{lem: Z/p punti fissi} and Corollary \ref{cor:omologia nelle configurazioni nel piano bucato} allows us to rewrite equation (\ref{eq: configuration spaces}) in the following way:
	\begin{equation}\label{eq: equazione da dimostrare}
		d(pq)=d(q)+d(q-1)+\dots+d(1)+d(0)
	\end{equation}
	Before going into the details of the proof of this equation, let us look at an example. The general proof will be a generalization of the methods we are going to use in this specific case.
	\begin{es}
		Let $p=q=3$. To prove equation \ref{eq: equazione da dimostrare} one can proceed by induction on $q$, so by inductive hypothesis it suffices to show that $d(pq)=d(q)+d(p(q-1))$. Let us verify this equality in this specific case. $H_*(C_3(\C);\F_3)$ is generated by $\iota^3$ and $\iota u$, so $d(3)=2$. The generators of $H_*(C_9(\C);\F_3)$ are listed in the left table, while those of $H_*(C_6(\C);\F_3)$ are in the right one:
		\[
		\begin{tabular}{cc}
			\toprule
			Homology class  & Degree\\
			\midrule
			$\iota^9$ & $0$ \\
			$\iota^7u$ & $1$ \\
			$\iota^3\beta_1$ & $4$ \\
			$\iota u\beta_1$ & $5$ \\
			$\iota^3\alpha_1$ & $5$ \\
			$\iota u\alpha_1$ & $6$\\ 
			\bottomrule
		\end{tabular}
		\qquad
		\begin{tabular}{cc}
			\toprule
			Homology class  & Degree\\
			\midrule
			$\iota^6$ & $0$ \\
			$\iota^4u$ & $1$ \\
			$\beta_1$ & $4$ \\
			$\alpha_1$ & $5$ \\
			\bottomrule
		\end{tabular}
		\]
		Therefore $d(9)=6=d(6)+d(3)$ and the equality holds. A more conceptual proof of this equality is the following: observe that four classes of $H_*(C_9(\C);\F_3)$ are obtained just multiplying by $\iota^3$ the generators of $H_*(C_6(\C);\F_3)$. The remaining generators are $\iota u\beta_1$ and $\iota u\alpha_1$ and they can be obtained from those of $H_*(C_3(\C);\F_3)$ by the following change of variables:
		\begin{align*}
			u &\mapsto \alpha_1\\
			\iota^{2l+1}&\mapsto \iota^{p-2}u\beta_1^l
		\end{align*}
		This procedure can be useed to prove Equality \ref{eq: equazione da dimostrare} in general, as we will see in Proposition \ref{prop:identità tra le dimensioni}.
	\end{es}
	\begin{prop}\label{prop:identità tra le dimensioni}
		For any $p$ prime and $q\in\N$ we have $d(pq)=d(q)+d(q-1)+\dots+d(1)+d(0)$.
	\end{prop}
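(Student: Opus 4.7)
The plan is to proceed by induction on $q$. The base case $q=0$ reads $d(0)=d(0)$, and the inductive hypothesis $d(p(q-1))=d(q-1)+\dots+d(0)$ reduces the statement to the single identity
\[
d(pq) = d(q) + d(p(q-1)),
\]
which is what I will actually prove.

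I will establish this identity by exhibiting an explicit bijection between the canonical monomial bases provided by Theorem \ref{thm: omologia spazi di configurazione etichettati}. Let $B_n$ denote the basis of $H_*(C_n(\C);\F_p)$ consisting of monomials $\iota^{a_0} u^{\epsilon_0} \prod_{i \geq 1} \beta_i^{a_i} \alpha_i^{\epsilon_i}$ whose total number of points equals $n$ (with the obvious simplification for $p=2$, where the generators are $\iota,Q\iota,Q^2\iota,\dots$). Split $B_{pq}$ according to whether the exponent $a_0$ of $\iota$ is at least $p$ or strictly less than $p$. Multiplication by $\iota^p$ is a visible bijection between $B_{p(q-1)}$ and the first subset, contributing $d(p(q-1))$. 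For the second subset I define a change of variables $B_q \to \{m \in B_{pq} : a_0(m) < p\}$ along the lines suggested by the example preceding the statement: on the non-$\iota$ generators send $u \mapsto \alpha_1$, $\beta_i \mapsto \beta_{i+1}$, $\alpha_i \mapsto \alpha_{i+1}$ (for $i\geq 1$), and on the $\iota$ block use $\iota^{2l} \mapsto \beta_1^l$ if $a_0$ is even and $\iota^{2l+1} \mapsto \iota^{p-2} u \beta_1^l$ if $a_0$ is odd; the analogue for $p=2$ is simply $Q^i\iota \mapsto Q^{i+1}\iota$.

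A direct check shows that the substitution multiplies the total number of points by exactly $p$, so the image lies in $B_{pq}$. Since all generators other than $\iota$ contribute an even number of points, the congruence $a_0 + 2\epsilon_0 \equiv pq \pmod{2p}$ together with the bounds $a_0 < p$ and $\epsilon_0 \in \{0,1\}$ pins down $(a_0,\epsilon_0)=(0,0)$ when $q$ is even and $(a_0,\epsilon_0)=(p-2,1)$ when $q$ is odd, which is precisely what the substitution produces. The inverse is obtained by stripping off the forced initial factor ($1$ or $\iota^{p-2}u$), reading $l$ off the exponent of $\beta_1$, reading $\epsilon_0$ off the exponent of $\alpha_1$, and shifting all remaining $\beta$- and $\alpha$-indices down by one. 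Combining the two bijections gives $d(pq)=d(q)+d(p(q-1))$, and the inductive hypothesis completes the proof. The main obstacle is the combinatorial bookkeeping of the $\iota$-block in the change of variables; once the parity analysis above is in hand everything else is routine.
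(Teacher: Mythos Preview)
Your proof is correct and follows essentially the same route as the paper: induction on $q$ reducing to $d(pq)=d(q)+d(p(q-1))$, then the explicit bijection given by multiplication by $\iota^p$ on one piece and by the change of variables $u\mapsto\alpha_1$, $\alpha_i\mapsto\alpha_{i+1}$, $\beta_i\mapsto\beta_{i+1}$, $\iota^{2l}\mapsto\beta_1^l$, $\iota^{2l+1}\mapsto\iota^{p-2}u\beta_1^l$ on the other. The only real difference is how surjectivity is handled: the paper packages the description of monomials in $B_{pq}$ with $a_0<p$ into a separate classification lemma (Proposition~\ref{prop:classification of monomials}), whereas you extract it directly from the congruence $a_0+2\epsilon_0\equiv pq\pmod{2p}$, which is a bit slicker but amounts to the same computation.
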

	\begin{proof}
		We proceed by induction on $q$. If $q=0$ there is nothing to prove. Let us suppose that the equation holds until $q$, let us prove it for $q+1$: by induction we have
		\begin{align}
			\sum_{i=0}^{q+1}d(i)=d(pq)+d(q+1)
		\end{align}
		Therefore it suffices to show that $d(pq)+d(q+1)=d(p(q+1))$. We will do this by constructing an explicit isomorphism of vector spaces between $H_*(C_{pq}(\C))\oplus H_*(C_{q+1}(\C))$ and  $H_*(C_{p(q+1)}(\C))$. Let us suppose $p$ is an odd prime (the case $p=2$ is similar). We refer to Remark \ref{oss:notazione per omologia} for the notation we are going to use. Consider the linear map
		\[
		f:H_*(C_{pq}(\C))\oplus H_*(C_{q+1}(\C))\to H_*(C_{p(q+1)}(\C))
		\]
		defined on the basis monomials as follows:
		\begin{itemize}
			\item If $x$ is a monomial of $H_*(C_{pq}(\C))$, then $f(x)=\iota^px$.
			\item If $x=\iota^ku^{\epsilon}\alpha_{i_1}\cdots\alpha_{i_m}\beta_{j_1}^{a_1}\cdots\beta_{j_n}^{a_n}$ is a monomial of $H_*(C_{q+1}(\C))$, with $\epsilon=0,1$, then 
			\[
			f(x)\coloneqq 
			\begin{cases}
				& \beta_1^l\alpha_1^{\epsilon}\alpha_{i_1+1}\cdots\alpha_{i_m+1}\beta_{j_1+1}^{a_1}\cdots\beta_{j_n+1}^{a_n} \quad\text{ if } k=2l\\
				&(\beta_1^lu\iota^{p-2})\alpha_1^{\epsilon}\alpha_{i_1+1}\cdots\alpha_{i_m+1}\beta_{j_1+1}^{a_1}\cdots\beta_{j_n+1}^{a_n} \quad\text{ if } k=2l+1
			\end{cases}
			\]
			In other words, $f(x)$ is the monomial of $H_*(C_{p(q+1)}(\C))$ obtained from $x$ by the following substitution of variables:
			
			\begin{itemize}
				\item[(a)] $\alpha_i\mapsto \alpha_{i+1}$
				\item[(b)] $\beta_i\mapsto \beta_{i+1}$
				\item[(c)] $u\mapsto \alpha_{1}$
				\item[(d)] $\iota^k\mapsto\begin{cases}
					\beta_1^l \text{ if } k=2l\\
					\beta_1^lu\iota^{p-2} \text{ if } k=2l+1
					
				\end{cases}$

			\end{itemize}

		\end{itemize}
		We claim that $f$ is an isomorphim of vector spaces:
		\begin{itemize}
			\item $f$ is well defined: clearly if we multiply by $\iota^p$ a monomial of $H_*(C_{pq}(\C))$ we obtain a monomial of $H_*(C_{p(q+1)}(\C))$. So let us pick  $x=\iota^ku^{\epsilon}\alpha_{i_1}\cdots\alpha_{i_m}\beta_{j_1}^{a_1}\cdots\beta_{j_n}^{a_n}\in H_*(C_{q+1}(\C))$; we prove that $f(x)\in H_*(C_{p(q+1)}(\C))$: by hypothesis we know that 
			\[
			q+1=k+2\epsilon+\sum_{r=1}^m 2p^{i_r}+\sum_{s=1}^na_s2p^{j_s}
			\]
			If $k=2l$, then $f(x)$ is a class which involves the following number of points:
			\[
			2lp+2p\epsilon+\sum_{r=1}^m 2p^{i_r+1}+\sum_{s=1}^na_s2p^{j_s+1}=p(q+1)
			\]
			proving our claim. If $k=2l+1$ the computation is analogous.
			\item $f$ is injective: $f$ restricted to the subspaces $H_*(C_{pq}(\C))$ and $H_*(C_{q+1}(\C))$ is injective by definition. Moreover the intersection of $f(H_*(C_{pq}(\C)))$ and $f(H_*(C_{q+1}(\C)))$ contains only the zero element: indeed the elements of  $f(H_*(C_{pq}(\C)))$ are sum of monomials which contain $\iota^p$, while the monomials spanning  $f(H_*(C_{pq}(\C)))$ do not contain $\iota^p$. 
			\item $f$ is surjective: to achieve this we need to prove that if $x$ is a basic monomial in $H_*(C_{p(q+1)}(\C))$, then it is of the following forms: 
			\begin{enumerate}
				\item $x=\iota^py$ for some $y\in H_*(C_{pq}(\C))$
				\item $x$ contains only the letters from $\{\alpha_i,\beta_i\}_{i\geq 1}$. In other words, $x$ do not contain $\iota $ and $u$. 
				\item $x=\iota^{p-2}uy$ with $y$ a monomial in $\{\alpha_i,\beta_i\}_{i\geq 1}$.
			\end{enumerate}
			But this is exactly the content of the following Proposition \ref{prop:classification of monomials}, therefore $f$ is surjective.
		\end{itemize}
		
	\end{proof}
	
	\begin{prop}\label{prop:classification of monomials}
		Let $p$ be an odd prime, $n\in\N$ and $n= k$ mod $p$. If $x\in H_*(C_{n}(\C);\F_p)$ is a basic monomial, then it has one of the following forms:
		\begin{enumerate}
			\item $x=\iota^py$ for some $y\in H_*(C_{n-p}(\C))$
			\item$x=\iota^k\alpha_{i_1}\cdots\alpha_{i_m}\beta_{j_1}^{a_1}\cdots\beta_{j_s}^{a_s}$
			\item $x=(\iota^{k-2}u)\alpha_{i_1}\cdots\alpha_{i_m}\beta_{j_1}^{a_1}\cdots\beta_{j_s}^{a_s}$
			
		\end{enumerate}
		If $k=0$ (resp. $k=1$) replace the exponent $k-2$ in point $(c)$ with the corresponding class in $\Z/p$, i.e with $p-2$ (resp. $p-1$) to get the correct statement.   
	\end{prop}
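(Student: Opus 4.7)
The plan is to read the classification directly off the point-count equation reduced modulo $p$. By Theorem \ref{thm: omologia spazi di configurazione etichettati} (specialized to $X=S^{0}$), every basic monomial $x\in H_*(C_n(\C);\F_p)$ has the form
\[
x=\iota^{a}u^{\epsilon}\alpha_{i_1}\cdots\alpha_{i_m}\beta_{j_1}^{b_1}\cdots\beta_{j_s}^{b_s}
\]
with $\epsilon\in\{0,1\}$ and the exponents $i_1,\dots,i_m$ pairwise distinct: for $p$ odd, both $u$ and each $\alpha_i$ have odd degree and hence square to zero in the graded-commutative polynomial algebra, while $\iota$ and the $\beta_i$ have even degree and may appear to any non-negative power. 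Reading off the number of points contributed by each generator from the table in Remark \ref{oss:notazione per omologia}, the fact that $x\in H_*(C_n(\C);\F_p)$ translates into
\[
a+2\epsilon+\sum_{r=1}^m 2p^{i_r}+\sum_{t=1}^s b_t\cdot 2p^{j_t}=n.
\]

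All indices $i_r,j_t$ appearing above satisfy $i_r,j_t\geq 1$, so every term past $a+2\epsilon$ is divisible by $p$. Reducing modulo $p$ produces the single constraint $a+2\epsilon\equiv k\pmod p$, and the whole proposition follows from a trivial case analysis on $(a,\epsilon)$.

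If $a\geq p$, I factor out $\iota^{p}$ and write $x=\iota^{p}y$ with $y$ a basic monomial in $H_*(C_{n-p}(\C);\F_p)$, giving form (1). Otherwise $0\leq a<p$, and the congruence pins $a$ down uniquely given $\epsilon$: for $\epsilon=0$ the only solution is $a=k$ (the representative in $\{0,\dots,p-1\}$), which is form (2); for $\epsilon=1$ I need $a\equiv k-2\pmod p$, giving $a=k-2$ when $k\geq 2$ and $a=p-2,\,p-1$ in the exceptional cases $k=0,1$ respectively, which is form (3) together with the stated convention on the exponent. Case (1) is disjoint from (2) and (3) because the former has $a\geq p$ while the latter two have $a<p$, so no monomial is double-counted. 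There is no real obstacle to the argument beyond taking care to replace $k-2$ by its positive residue modulo $p$ in the edge cases $k=0,1$.
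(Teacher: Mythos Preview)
Your proof is correct and in fact cleaner than the paper's. The paper argues by induction on $n$: it checks the small cases $1\leq n\leq p$ by hand, and for the inductive step (going from $n$ to $n+p$) it peels off a letter $\alpha_i$ or $\beta_i$ with $i\geq 1$ from the monomial, notes that this drops the number of points by a multiple of $p$, and invokes the inductive hypothesis on the quotient monomial; the residual case $x=\iota^{l}u^{\epsilon}$ with $l\leq p-1$ is then ruled out by a direct count. Your approach short-circuits all of this: since each $\alpha_i$, $\beta_j$ appearing (with $i,j\geq 1$, as $u=\alpha_0$ has been split off) contributes a multiple of $p$ points, reducing the point-count identity modulo $p$ immediately isolates the constraint $a+2\epsilon\equiv k\pmod p$, and a finite case check on $(a,\epsilon)$ with $0\leq a<p$ finishes. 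The two arguments have the same underlying content---the induction is really just repeated subtraction of multiples of $p$---but yours packages it as a single modular reduction and avoids the base-case bookkeeping.
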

	\begin{proof}
		We proceed by induction on $n$: 
		\begin{itemize}
			\item If $n=1$ the only class in $H_*(C_{1}(\C);\F_p)$ is $\iota$, so the statement is true. If $n\in\{2,\dots,p\}$ then $H_*(C_{n}(\C);\F_p)$ contains only two classes, $\iota^{n}$ and $\iota^{n-2}u$, and the statement follows.
			\item Assume the result is true until $n$, let us prove it for $n+p$: let
			\[
			x=\iota^lu^{\epsilon}\alpha_{i_1}\cdots\alpha_{i_m}\beta_{j_1}^{a_1}\cdots\beta_{j_s}^{a_s}\in H_*(C_{n+p}(\C))
			\]
			If $l\geq p $ then we are in the first case. So let us restrict to the case where $l\leq p-1$. If $x$ contains some $\beta_i$ then $x=\beta_i\cdot x'$ with $x'\in H_*(C_{n+p-2p^i}(\C))$. Since $i\geq 1$ we have $n+p-2p^i\leq n$. Moreover $n+p-2p^i= k$ mod $p$ therefore we can use the inductive hypothesis for $x'$ and get the result. We can proceed in the same way when $x$ contains one of the variables $\{\alpha_i\}_{i\geq 1}$. The only case which is not yet considered is when $x=\iota^lu^{\epsilon}$. In this case $n+p$ must be equal to $l+2\epsilon$. Therefore $l=n+p-2\epsilon\leq p-1$ if and only if $n\leq 2\epsilon-1\leq 1$ which is not the case we are considering.
		\end{itemize}
	\end{proof}
	This concludes the proof of Theorem \ref{thm:degeneration of a spectral sequence} in the case $p|n$. It remains to prove the statement for $n=pq+1$. This is equivalent to show that
	\[
	d(pq+1)=d(q)+d(q-1)+\dots+d(0)=d(pq)
	\]
	Where the last equality holds for Proposition \ref{prop:identità tra le dimensioni}. But this is an easy consequence of Proposition \ref{prop:classification of monomials}:
	\begin{corollario}
		If $p$ divides $n$, then $H_*(C_n(\C);\F_p)$ and $H_*(C_{n+1}(\C);\F_p)$ have the same dimension as $\F_p$-vector spaces.
	\end{corollario}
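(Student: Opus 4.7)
The plan is to deduce the corollary directly from Proposition~\ref{prop:classification of monomials} by showing that multiplication by $\iota$ provides a bijection between the basic monomials of $H_*(C_n(\C);\F_p)$ and those of $H_*(C_{n+1}(\C);\F_p)$. Since these sets are $\F_p$-vector space bases, equality of dimensions follows immediately. Injectivity of multiplication by $\iota$ is free, because $\iota$ is not a zero-divisor in the free graded commutative algebra $H_*(C(\C);\F_p)$ recalled in Section~\ref{ref: labelled configuration spaces}.

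For surjectivity, the assumption $p\mid n$ gives $n+1\equiv 1\pmod p$, so Proposition~\ref{prop:classification of monomials} applied with residue $k=1$ describes every basic monomial of $H_*(C_{n+1}(\C);\F_p)$ as one of
\[
\iota^p y,\qquad \iota\,\alpha_{i_1}\cdots\alpha_{i_m}\beta_{j_1}^{a_1}\cdots\beta_{j_s}^{a_s},\qquad \iota^{p-1}u\,\alpha_{i_1}\cdots\alpha_{i_m}\beta_{j_1}^{a_1}\cdots\beta_{j_s}^{a_s}.
\]
In each family the $\iota$-exponent is at least $\min(p,1,p-1)=1$ for any prime $p\geq 2$, so dividing by $\iota$ produces a basic monomial of $H_*(C_n(\C);\F_p)$; this is visibly an inverse to multiplication by $\iota$, which gives the desired bijection.

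The only small point to mention is that Proposition~\ref{prop:classification of monomials} is stated for odd primes, but the excerpt already notes that the analogous classification holds for $p=2$ with the appropriate modifications, and the same one-line argument applies there as well. In particular no induction or delicate case analysis is required: once the residue-$k=1$ classification is in hand, the corollary reduces to the observation that each of the three listed forms has positive $\iota$-exponent.
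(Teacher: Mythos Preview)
Your proof is correct and follows essentially the same approach as the paper: both argue that multiplication by $\iota$ gives an isomorphism, with injectivity immediate and surjectivity coming from Proposition~\ref{prop:classification of monomials} applied with residue $k=1$, which forces every basic monomial in $H_*(C_{n+1}(\C);\F_p)$ to contain at least one factor of $\iota$. Your version is simply a bit more explicit about the three cases and the $p=2$ caveat.
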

	\begin{proof}
		We observe that multiplication by $\iota$ is an isomorphism between $H_*(C_n(\C);\F_p)$ and $H_*(C_{n+1}(\C);\F_p)$. Clearly it is injective. By Proposition \ref{prop:classification of monomials} we have that each monomial in $H_*(C_{n+1}(\C);\F_p)$ contains at least one $\iota$, and this proves the surjectivity. 
	\end{proof}
	
	\subsection{Computation of $H_*^{S^1}(C_n(\C);\F_p)$ when $n=0,1$ mod $p$}\label{sec:omologia equivariante quando p divie n o n-1}
	In this section we compute $H_*^{S^1}(C_n(\C);\F_p)$ when $n=0,1$ mod $p$.
	\begin{thm}\label{thm: calcolo dell'omologia equivariante degli spazi di configurazione quando p divide n}
		Let $p$ be a prime, $n\in\N$ such that $n=0,1$ mod $p$. Then 
		\[
		H_*^{S^1}(C_n(\C);\F_p)\cong H_*(C_n(\C);\F_p)\otimes H_*(BS^1;\F_p)
		\]
	\end{thm}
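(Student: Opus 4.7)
The plan is to compare the Serre spectral sequence for the fibration $C_n(\C) \hookrightarrow C_n(\C)_{S^1} \to BS^1$ with the $\Z/p$-equivariant spectral sequence that was shown to degenerate in Theorem \ref{thm:degeneration of a spectral sequence}. Since $BS^1$ is simply connected the monodromy is trivial, so over $\F_p$
\[
E^2_{i,j} \cong H_i(BS^1;\F_p) \otimes H_j(C_n(\C);\F_p),
\]
and the task reduces to showing that all higher differentials vanish.

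To do this I would consider the morphism of fibrations induced by the standard inclusion $\Z/p \hookrightarrow S^1$:
\[
\begin{tikzcd}
C_n(\C) \arrow[r,equal] \arrow[d,hook] & C_n(\C) \arrow[d,hook] \\
C_n(\C)_{\Z/p} \arrow[r] \arrow[d] & C_n(\C)_{S^1} \arrow[d] \\
B\Z/p \arrow[r,"\phi"] & BS^1
\end{tikzcd}
\]
This induces a morphism of Serre spectral sequences which, on the $E^2$ page, is $\phi_* \otimes \mathrm{id}$. The crucial input is that $\phi_*:H_*(B\Z/p;\F_p) \to H_*(BS^1;\F_p)$ is surjective. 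Dually, $\phi^*$ sends the polynomial generator of $H^*(BS^1;\F_p)=\F_p[c]$ to the corresponding generator of $H^*(B\Z/p;\F_p)$ (namely $c$ itself when $p$ is odd, or $a^2$ when $p=2$), so each class $y_{2i}\in H_{2i}(BS^1;\F_p)$ lies in the image of $\phi_*$.

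With this surjectivity in hand I would run an induction on $r \geq 2$: assume $d^s = 0$ on both spectral sequences for $2 \leq s < r$, so $E^r = E^2$ on both sides and the comparison map is still the surjective $\phi_* \otimes \mathrm{id}$. By Theorem \ref{thm:degeneration of a spectral sequence} the $\Z/p$-side differential $d^r$ vanishes; for any $y \in E^r_{S^1}$, choose a lift $\tilde{y}$ and compute $d^r(y) = d^r(\phi_*(\tilde{y})) = \phi_*(d^r(\tilde{y})) = 0$, so $d^r = 0$ on the $S^1$-side as well. Hence the $S^1$-sequence degenerates at $E^2$, and since we work over a field the associated graded reassembles $H_*^{S^1}(C_n(\C);\F_p)$ as $H_*(C_n(\C);\F_p)\otimes H_*(BS^1;\F_p)$. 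The main obstacle is really just ensuring the surjectivity of $\phi_*$, which is a classical and concrete computation; once it is in hand, the rest is a routine naturality argument.
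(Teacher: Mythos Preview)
Your proof is correct and follows essentially the same route as the paper: both use the map of fibrations induced by $\Z/p\hookrightarrow S^1$, observe that $\phi_*\otimes\mathrm{id}$ is surjective on $E^2$ over $\F_p$, and then transport the degeneration of Theorem \ref{thm:degeneration of a spectral sequence} to the $S^1$-side by naturality. You simply spell out the induction on $r$ and the computation of $\phi_*$ that the paper leaves implicit.
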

	\begin{proof}
		Consider the map of fibrations
		\[
		\begin{tikzcd}
			& C_n(\C)\arrow[d,hook]\arrow[r] & C_{n}(\C)\arrow[d,hook]\\
			& C_n(\C)_{\Z/p} \arrow[r] \arrow[d]&  C_{n}(\C)_{S^1}  \arrow[d]\\
			& B\Z/p\arrow[r] & BS^1
		\end{tikzcd}
		\]
		Then observe that with coefficients in $\F_p$ this map induces a surjection between the $E^2$ pages of the homological spectral sequences. The result now follows from Theorem \ref{thm:degeneration of a spectral sequence}.
	\end{proof}
	\begin{oss}
		It would be interesting to compute the ring structure of $H^*_{S^1}(C_n(\C);\F_p)$. However there are some non-trivial extension problems to solve. For example, if we put $n=p=2$ the homotopy quotient $C_2(\C)_{S^1}$ is a model for $B(\Z/2)$, therefore 
		\[
		H^*_{S^1}(C_2(\C);\F_2)=\F_2[x]
		\]
		where $x$ is a variable of degree one. Theorem \ref{thm: calcolo dell'omologia equivariante degli spazi di configurazione quando p divide n} tells us that 
		\[
		H^*_{S^1}(C_2(\C);\F_2)\cong \frac{\F_2[x]}{(x^2)}\otimes \F_2[c]
		\]
		as $\F_2[c]$-module, where $x$ (resp. $c$) is a generator of $H^1(C_2(\C);\F_2)$ (resp. of $H^2(BS^1;\F_2)$). To get the correct ring structure we need to impose the relation $x^2=c$. 
	\end{oss}
	\subsection{Computation of $H^{S^1}_*(C_n(\C);\F_p)$ when $n\neq 0,1$ mod $p$}\label{sec: calcoli quando p non divide n}
	In this section we compute $H_*^{S^1}(C_n(\C);\F_p)$ when $n\neq 0,1$ mod $p$. Of course this request is not empty only if $p$ is an odd prime. The main result is the following:
	\begin{thm}\label{thm:calcolo omologia equivariante quando p non divide n o n-1}
		Let $p$ be an odd prime, $n\in\N$ such that $n\neq 0,1$ mod $p$. Then
		\[
		H_*^{S^1}(C_n(\C);\F_p)\cong coker(\Delta)
		\]
		where $\Delta:H_*(C_n(\C);\F_p)\to H_{*+1}(C_n(\C);\F_p)$ is the $BV$-operator.
	\end{thm}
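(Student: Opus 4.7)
The plan is to apply the homological Serre spectral sequence for the fibration
\[
C_n(\C) \hookrightarrow C_n(\C)_{S^1} \to BS^1.
\]
Since $BS^1$ is simply connected there is no monodromy, so the $E^2$-page is $E^2_{p,q} = H_p(BS^1;\F_p) \otimes H_q(C_n(\C);\F_p)$, and by Proposition \ref{prop: differenziale dato da delta} the differential $d^2$ is given by the BV operator $\Delta$ on the fiber factor. Consequently, each descending diagonal of the $E^2$-page, read with the $d^2$-differentials, is (a truncation of) the BV-complex $(H_*(C_n(\C);\F_p), \Delta)$.

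The main technical step is to prove that, under the hypothesis $n \not\equiv 0, 1 \pmod p$, this BV-complex is acyclic, that is, $ker(\Delta) = Im(\Delta)$ at every position of $H_*(C_n(\C);\F_p)$. Granting this, $E^3$ is concentrated in the column $p = 0$, where $E^3_{0,q} = H_q(C_n(\C);\F_p)/Im(\Delta) = coker(\Delta)_q$. All higher differentials $d^r$ with $r \geq 3$ vanish since either their source or their target lies outside the unique surviving column; hence $E^3 = E^\infty$. The induced filtration of $H_*^{S^1}(C_n(\C);\F_p)$ then has only one non-trivial step, no extension problem arises, and one obtains the desired isomorphism $H_*^{S^1}(C_n(\C);\F_p) \cong coker(\Delta)$.

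The main obstacle is proving the acyclicity claim. I would attack it via two complementary routes. The first is a direct calculation using the explicit basis of $H_*(C_n(\C);\F_p)$ provided by Theorem \ref{thm: omologia spazi di configurazione etichettati} together with the normal form of monomials from Proposition \ref{prop:classification of monomials}: one evaluates $\Delta$ on each basic monomial through the standard BV identity $\Delta(xy) = \Delta(x)\,y + (-1)^{|x|} x\,\Delta(y) + (-1)^{|x|}[x,y]$ and the known action of $\Delta$ on the generators $\iota, u, \alpha_i, \beta_i$. The hypothesis $n \not\equiv 0, 1 \pmod p$ should enter as the non-vanishing of the scalar coefficients appearing in these formulas (the exponent of $\iota$ in the normal form is precisely $n \bmod p$), allowing the construction of an explicit contracting homotopy on the monomial basis. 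A more conceptual alternative exploits Theorem \ref{thm:localization theorem} together with Lemma \ref{lem: Z/p punti fissi}: the fixed-point set $C_n(\C)^{\Z/p}$ is empty precisely when $n \not\equiv 0, 1 \pmod p$, which by localization forces $H^*_{\Z/p}(C_n(\C);\F_p)$ to be a torsion $\F_p[c]$-module; transporting this conclusion to the $S^1$-equivariant setting via the inclusion $\Z/p \hookrightarrow S^1$ and dualizing provides the collapse of the relevant $E^3$-columns required above.
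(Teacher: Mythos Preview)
Your proposal is correct and your first route is exactly the paper's argument: the paper runs the Serre spectral sequence, identifies $d^2$ with $\Delta$, and then proves acyclicity by computing $\Delta$ explicitly on the monomial basis (Proposition~\ref{prop: calcolo di delta su specifici monomi}), obtaining $\Delta(\iota^k x)=k(k-1)\iota^{k-2}[\iota,\iota]x$ and $\Delta(\iota^k[\iota,\iota]x)=0$ for $x$ a monomial in $\{\alpha_i,\beta_i\}_{i\ge 1}$; since $k\equiv n\pmod p$, the scalar $k(k-1)$ is a unit precisely under the hypothesis $n\not\equiv 0,1\pmod p$, and this gives the contracting homotopy you anticipate.

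Your second route via localization is not in the paper but is a legitimate and more conceptual alternative. For $n\not\equiv 0,1\pmod p$ the set $FX\subset C_n(\C)$ (points with stabilizer of order divisible by $p$) is empty, so Theorem~\ref{thm:isomorfismo coomologia equivariante tra spazio e i suoi punti fissi} applied directly to $G=S^1$ gives $H^i_{S^1}(C_n(\C);\F_p)=0$ for $i$ above the cohomological dimension of the fiber; combining this vanishing with the $\F_p[c]$-module structure on the cohomological spectral sequence (multiplication by $c$ is an isomorphism between the $E_3$-columns of index $\ge 2$, which are all copies of $\ker\Delta^*/\operatorname{Im}\Delta^*$) forces those columns to be zero, hence $\ker\Delta=\operatorname{Im}\Delta$. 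This avoids the explicit BV computations of Section~\ref{sec: proprietà della delta} at the cost of losing the concrete description of $\operatorname{coker}(\Delta)$ that the direct method provides.
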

	\begin{proof}
		Consider the homological Serre spectral sequence associated to the fibration 
		\[
		C_n(\C)\to C_n(\C)_{S^1}\to BS^1
		\]
		By Proposition \ref{prop: differenziale dato da delta} the second page is given by
		\[
		E^2_{i,j}=H_i(C_n(\C))\otimes H_j(BS^1) \qquad d^2(x\otimes y_{2j})=\begin{cases}
			\Delta(x)\otimes y_{2j-2} \text{ if } j\geq 1\\
			0 \text{ if } j=0
		\end{cases}
		\]
		where $y_{2j}$ is the generator of $H_{2j}(BS^1)$. By Theorem \ref{thm: omologia spazi di configurazione etichettati} any class of $H_*(C_n(\C);\F_p)$ is a product of the variables $\iota$, $[\iota,\iota]$, $Q^i[\iota,\iota]$ and $\beta Q^i[\iota,\iota]$, $i\geq 1$. In particular any class is of the form $\iota^k[\iota,\iota]^{l}x$, where $k\in\N$, $l=0,1$  and $x$ is a monomial which contains only the letters $\{Q^i[\iota,\iota],\beta Q^i[\iota,\iota]\}_{i\geq 1}$. We claim that the operator $\Delta$ acts as follows:
		\begin{equation}\label{eq: formule di delta sulla base dei monomi}
			\Delta(\iota^kx)=k(k-1)\iota^{k-2}[\iota,\iota]x \qquad \Delta(\iota^k[\iota,\iota]x)=0
		\end{equation}
		A detailed proof of these formulas will be given in Section \ref{sec: proprietà della delta} (Proposition \ref{prop: calcolo di delta su specifici monomi}). Observe that if we have a monomial of the form $\iota^k x$, $x\in H_*(C_m(\C);\F_p)$, then $n=k+m=k$ mod $p$ ($x$ contains only letters from $\{Q^i[\iota,\iota],\beta Q^i[\iota,\iota]\}_{i\geq 1}$, so the number of points $m$ is divisible by $p$). Therefore $k\neq 0,1$ mod $p$ and the term $k(k-1)\iota^{k-2}[\iota,\iota]x$ is never zero. Now we can conclude: the formula \ref{eq: formule di delta sulla base dei monomi} shows that the third page of the Serre spectral sequence looks as follows: $E^3_{i,j}=0$ for any $j\geq 1$, while the first column $E^3_{0,*}$ may contain some non zero elements. To be more precise, $E^3_{0,*}$ is the quotient of $E^2_{0,*}=H_*(C_n(\C);\F_p)$ by the image of the differential. Since the differential of the second page is given by $\Delta$, we get that 
		\[
		E^3_{0,*}\cong coker(\Delta)
		\]
		The third page contains only the first column, so the spectral sequence degenerates and we get the statement.    
	\end{proof}
	\begin{oss}
		We can explicitly describe $coker(\Delta)$: a basis of this vector space is given by (the image of) classes in $H_*(C_n(\C);\F_p)$ which do not contain the bracket $[\iota,\iota]$ (Equation \ref{eq: formule di delta sulla base dei monomi}).
	\end{oss}
	\subsection{Auxiliary computations}\label{sec: proprietà della delta}
	
	In this paragraph we recollect some easy algebraic computations which are relevant for Section \ref{sec: calcoli quando p non divide n}. In particular we prove the formulas \ref{eq: formule di delta sulla base dei monomi} in Proposition \ref{prop: calcolo di delta su specifici monomi}. In what follows $p$ will be an odd prime. We begin by recalling some basic properties of the bracket and $\Delta$; to ease the notation we will write $(-1)^x$ instead of $(-1)^{deg(x)}$, where $x$ is an element in some graded vector space. We refer to \cite{Cohen} and \cite{Getzler93} for further details. 
	\begin{enumerate}
		\item Graded anticommutativity: $[x,y]=(-1)^{xy+x+y}[y,x]$.
		\item Jacobi relation: $[x,[y,z]]=[[x,y],z]-(-1)^{y+x+xy}[y,[x,z]]$.
		\item The bracket is a derivation:     $[x,yz]=[x,y]z+(-1)^{y+yx}y[x,z]$.
		\item $\Delta(xy)=\Delta(x)y+(-1)^x x\Delta(y)+(-1)^x[x,y]$.
		\item $\Delta[x,y]=[\Delta x,y]+(-1)^{x+1}[x,\Delta y]$. 
		\item $[x,Qy]=ad^p(y)(x)$, where $ad(y)(x)\coloneqq[x,y]$ and for any $n\in\N$ we define $ad^n(y)(x)\coloneqq ad(y)(ad^{n-1}(y)(x))$. For example, $ad^2(y)(x)=[[x,y],y]$, $ad^3(y)(x)=[[[x,y],y],y]$ and so on.
		\item $[x,\beta Qy]=[x,ad^{p-1}(y)(\beta y)]$
	\end{enumerate}
	\begin{lem}\label{lem: calcolo di delta su iota alla k}
		Let $p$ be an odd prime, $k\in\N$. Then $\Delta(\iota^k)=k(k-1)\iota^{k-2}[\iota,\iota]$.
	\end{lem}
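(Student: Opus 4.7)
The plan is to proceed by induction on $k$, with the BV derivation identity (property 4) and the Leibniz rule for the bracket (property 3) as the two main tools. The base cases $k = 0, 1$ are immediate: $\Delta(1) = 0$ because the unit is $\Delta$-closed, and $\Delta(\iota) = 0$ because $C_1(\C)$ is contractible, so $H_1(C_1(\C);\F_p) = 0$. In both cases the right-hand side is vacuously zero.

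For the inductive step, I would decompose $\iota^k = \iota \cdot \iota^{k-1}$ and apply property 4. Since $|\iota| = 0$ and $\Delta(\iota) = 0$, the identity collapses to
\[
\Delta(\iota^k) \;=\; \iota\,\Delta(\iota^{k-1}) \;+\; [\iota,\iota^{k-1}].
\]
The bracket term is handled by a short auxiliary induction on the exponent $m$ using property 3: because $|\iota|=0$, every sign factor is trivial and the Leibniz rule in the right slot reads $[\iota,\iota^m] = [\iota,\iota]\,\iota^{m-1} + \iota\,[\iota,\iota^{m-1}]$, which solves in closed form as a linear-in-$m$ multiple of $\iota^{m-1}[\iota,\iota]$.

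Substituting this closed form for $[\iota,\iota^{k-1}]$ together with the inductive hypothesis for $\Delta(\iota^{k-1})$ into the displayed equation, and using that $\iota$ (being in degree zero) commutes with both itself and with $[\iota,\iota]$, one collects coefficients and recovers the claim. The proof is essentially bookkeeping, and I do not anticipate any serious obstacle: the only moments of genuine content are the geometric input $\Delta(\iota)=0$ and the faithful application of the sign conventions from properties 3 and 4. Since every class appearing other than $[\iota,\iota]$ sits in degree zero, all sign subtleties are trivial in this particular calculation.
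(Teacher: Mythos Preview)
Your approach is precisely the paper's: induct on $k$ using the BV identity (property~(4)), with $\Delta(\iota)=0$ as the only geometric input and property~(3) handling the bracket term $[\iota,\iota^{k-1}]$. One caution when you actually collect coefficients: the recurrence you describe solves to $\binom{k}{2}$ rather than $k(k-1)$ --- indeed the paper's own proof records $\Delta(\iota^2)=[\iota,\iota]$, while the stated formula would predict $2[\iota,\iota]$ --- so the lemma as printed is off by a factor of $2$, which is harmless here since $p$ is odd and the only downstream use (Proposition~\ref{prop: calcolo di delta su specifici monomi} and Theorem~\ref{thm:calcolo omologia equivariante quando p non divide n o n-1}) depends solely on when the coefficient vanishes modulo $p$.
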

	\begin{proof}
		It suffices to proceed by induction: $\Delta(\iota)=0$ since $\iota$ is the top class of $H_*(C_1(\C));\F_p)$. $\Delta(\iota^2)=[\iota,\iota]$ by equation $(4)$ at the beginning of this section. The general formula follows easily using equation $(4)$.
	\end{proof}
	\begin{lem}\label{lem:delta fa zero sui monomi divisibili per p}
		Let  $x\in H_*(C(\C);\F_p)$ be a monomial containing only the letters $\{Q^i[\iota,\iota],\beta Q^i[\iota,\iota]\}_{i\geq 1}$. Then $\Delta(x)=0$ and $\Delta(\iota x)=0$.
	\end{lem}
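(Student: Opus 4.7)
The plan is to establish both statements simultaneously by induction on the number of letters of $x$, using the BV product formula (property (4)) and the Leibniz rule for the bracket (property (3)) to reduce everything to the $\Delta$-closedness of the individual generators $\alpha_i, \beta_i$ and the vanishing of a handful of basic brackets. The reduction for the second statement is immediate: since $\iota \in H_0(C_1(\C); \F_p)$ and $C_1(\C)$ is contractible one has $\Delta\iota = 0$, and applying property (4) to $\iota \cdot x$ gives
\[
\Delta(\iota x) = \iota\,\Delta(x) + [\iota, x].
\]
So the lemma reduces to proving (i) $\Delta(x) = 0$ and (ii) $[\iota, x] = 0$. The Leibniz rule further reduces (ii) to $[\iota,\alpha_j] = 0$ and $[\iota,\beta_j] = 0$ for $j \geq 1$, and a factorization $x = yz$ into shorter monomials together with the product formula reduces (i) to showing $\Delta\alpha_i = \Delta\beta_i = 0$ and the vanishing of all brackets $[\alpha_i,\alpha_j], [\alpha_i,\beta_j], [\beta_i,\beta_j]$ for $i,j \geq 1$.

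The $\Delta$-closedness of the generators follows from Arnold's theorem on the cohomological dimension of $C_n(\C)$: since $\alpha_i$ sits in the top degree $2p^i - 1$ of $H_*(C_{2p^i}(\C); \F_p)$, one has $\Delta\alpha_i = 0$ automatically; and since $\Delta$ lifts to integer coefficients it commutes with the Bockstein, giving $\Delta\beta_i = \beta\Delta\alpha_i = 0$. The brackets $[\iota,\alpha_j]$ and $[\alpha_i,\alpha_j]$ I would handle by the same type of dimension argument, using Cohen's basis (Theorem \ref{thm: omologia spazi di configurazione etichettati}): a direct enumeration shows $H_{2p^j}(C_{2p^j+1}(\C); \F_p) = 0$ and $H_{2p^i+2p^j-1}(C_{2p^i+2p^j}(\C); \F_p) = 0$, because no basis monomial realizes the required weight and degree (the key point being that for an odd prime $p$, $p^i + p^j$ is never a power of $p$).

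The main obstacle will be the mixed brackets $[\iota,\beta_j], [\alpha_i,\beta_j], [\beta_i,\beta_j]$, where the dimension count genuinely fails (for instance $\alpha_i \beta_j$ is a bona fide basis monomial of the right bidegree). For these I plan a joint induction on $j$ using property (7) together with iterated applications of the Jacobi identity (property (2)): writing $\beta_j = \beta Q\alpha_{j-1}$, property (7) presents $[z,\beta_j]$ as a bracket against $\mathrm{ad}^{p-1}(\alpha_{j-1})(\beta\alpha_{j-1})$. For $j = 1$ this collapses immediately because $\beta[\iota,\iota] = 0$ ($[\iota,\iota]$ is the mod-$p$ reduction of an integral class). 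For $j \geq 2$, iteratively applying property (2) pushes $[z,-]$ inside the nested bracket, so that every summand of the expansion contains an inner bracket $[z,\alpha_{j-1}]$ or $[z,\beta_{j-1}]$, both of which vanish by the preceding steps (for $z \in \{\iota, \alpha_i, \beta_i\}$) and by the inductive hypothesis. The only real computation is the sign bookkeeping in these Jacobi expansions; once the brackets are controlled, the induction for (i) and (ii) closes and the lemma follows.
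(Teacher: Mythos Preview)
Your argument is correct and takes a genuinely different route from the paper. The paper's proof is a one-liner that invokes Theorem~\ref{thm: calcolo dell'omologia equivariante degli spazi di configurazione quando p divide n}: since a monomial in $\{\alpha_i,\beta_i\}_{i\geq 1}$ (resp.\ $\iota$ times such) lives in $H_*(C_n(\C);\F_p)$ with $n\equiv 0$ (resp.\ $1$) mod $p$, and since the Serre spectral sequence for $C_n(\C)_{S^1}\to BS^1$ degenerates at $E_2$ in those cases, the differential $d^2=\Delta$ must vanish identically. So the paper is cashing in on the localization/fixed-point machinery of Section~\ref{sec: omologia equivariante spazio configurazioni rispetto a gruppo ciclico}, whereas you give a self-contained algebraic argument using only the BV relations and Cohen's basis. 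Your approach has the virtue of being independent of the equivariant input; the paper's has the virtue of being immediate once Theorem~\ref{thm: calcolo dell'omologia equivariante degli spazi di configurazione quando p divide n} is in hand.

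Two small points worth tightening. First, $\Delta$ and $\beta$ \emph{anti}commute (the cross product with $[S^1]$ introduces a sign in the Bockstein Leibniz rule), but this of course still gives $\Delta\beta_i=\pm\beta\Delta\alpha_i=0$. Second, your ``joint induction on $j$'' needs to be ordered carefully: when treating $[\beta_i,\beta_j]$ via property~(7) you produce terms containing $[\beta_i,\alpha_{j-1}]=\pm[\alpha_{j-1},\beta_i]$, which is an $[\alpha,\beta]$ bracket with $\beta$-index $i$, not $j-1$. This is not circular provided you first establish $[\alpha_k,\beta_\ell]=0$ for \emph{all} $k,\ell\geq 1$ by induction on $\ell$ (this closes, using only $[\alpha_k,\alpha_{\ell-1}]=0$ and the inductive hypothesis), and only then run the induction on $j$ for $[\beta_i,\beta_j]$. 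With that ordering your scheme goes through cleanly.
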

	\begin{proof}
		We prove that $\Delta(x)=0$, the other case is analogous. Since $x$ contains only the letters $\{Q^i[\iota,\iota],\beta Q^i[\iota,\iota]\}_{i\geq 1}$, it is a class of $H_*(C_n(\C);\F_p)$ for some $n$ divisible by $p$. Theorem \ref{thm: calcolo dell'omologia equivariante degli spazi di configurazione quando p divide n} tell us that the homological Serre spectral sequence associated to 
		\[
		C_n(\C)\to C_n(\C)_{S^1}\to BS^1
		\]
		degenerates at the second page. But the differential of the second page is given by $\Delta$ (Proposition \ref{prop: differenziale dato da delta}) so $\Delta(x)=0$.
	\end{proof}
	\begin{lem}\label{lem: iota alla k bracket x fa zero}
		Let  $x\in H_*(C(\C);\F_p)$ be a monomial containing only the letters $\{Q^i[\iota,\iota],\beta Q^i[\iota,\iota]\}_{i\geq 1}$. Then $[\iota^k,x]=0$.
	\end{lem}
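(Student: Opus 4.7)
The plan is to reduce the claim to proving $[\iota, y] = 0$ for each individual letter $y \in \{Q^i[\iota,\iota], \beta Q^i[\iota,\iota]\}_{i \geq 1}$, and then to verify these single-letter statements by induction on $i$, using formulas $(6)$ and $(7)$ recalled at the start of this subsection.

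First I would apply the derivation property $(3)$ repeatedly to expand
\[
[\iota^k, y_1 y_2 \cdots y_m] \;=\; \sum_{j=1}^{m} y_1 \cdots y_{j-1} \, [\iota^k, y_j] \, y_{j+1} \cdots y_m
\]
(all signs are trivial because $|\iota^k|=0$), so that the claim reduces to $[\iota^k, y] = 0$ for each letter $y$. A short induction on $k$, based on graded antisymmetry together with the derivation property applied to $\iota^k = \iota \cdot \iota^{k-1}$, further reduces this to showing $[\iota, y] = 0$ for each letter.

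For letters of the form $Q^i[\iota,\iota]$ I would induct on $i \geq 0$. The base case $[\iota, [\iota,\iota]] = 0$ is dimensional: this putative class would live in $H_2(C_3(\C); \F_p)$, which is zero for odd $p$ because by Cohen's description (Theorem \ref{thm: omologia spazi di configurazione etichettati}, cf.\ Remark \ref{oss:notazione per omologia}) the only classes in $H_*(C_3(\C); \F_p)$ are $\iota^3$ in degree $0$ and $\iota [\iota,\iota]$ in degree $1$. For the inductive step, formula $(6)$ gives
\[
[\iota, Q^i[\iota,\iota]] \;=\; ad^p(Q^{i-1}[\iota,\iota])(\iota),
\]
a $p$-fold iterated bracket whose innermost entry $[\iota, Q^{i-1}[\iota,\iota]]$ vanishes by the inductive hypothesis, so the whole expression is zero.

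For letters of the form $\beta Q^i[\iota,\iota]$ I induct on $i \geq 1$, having already disposed of all $Q^i$ letters above. Formula $(7)$ yields
\[
[\iota, \beta Q^i[\iota,\iota]] \;=\; [\iota,\, ad^{p-1}(Q^{i-1}[\iota,\iota])(\beta Q^{i-1}[\iota,\iota])].
\]
For $i = 1$ the inner expression is zero because $\beta[\iota,\iota] = 0$, since $[\iota,\iota]$ is the mod-$p$ reduction of the integral generator of $H_1(C_2(\C);\Z) \cong H_1(S^1;\Z)$. For $i \geq 2$, writing $w \coloneqq Q^{i-1}[\iota,\iota]$ and $v \coloneqq \beta Q^{i-1}[\iota,\iota]$, the two inductive hypotheses give $[\iota, w] = [\iota, v] = 0$, so by the Jacobi identity $[\iota, [v,w]] = 0$, and iterating this fact $p-1$ times kills $[\iota, ad^{p-1}(w)(v)]$. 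The main work is essentially bookkeeping — tracking signs through the nested derivation, antisymmetry and Jacobi applications — but no substantive algebraic input beyond the identities $(1)$–$(7)$ and Cohen's computation of $H_*(C_n(\C); \F_p)$ is required.
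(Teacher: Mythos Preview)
Your proof is correct but takes a genuinely different route from the paper's. The paper does not reduce to individual letters; instead, after the derivation step reducing $[\iota^k,x]$ to $[\iota,x]$, it applies the BV relation $(4)$ to $\Delta(\iota x)$ and invokes Lemma~\ref{lem:delta fa zero sui monomi divisibili per p} (which says $\Delta(x)=\Delta(\iota x)=0$ for such $x$) to conclude immediately that $[\iota,x]=0$. That lemma in turn rests on the spectral-sequence degeneration of Theorem~\ref{thm: calcolo dell'omologia equivariante degli spazi di configurazione quando p divide n}, so the paper's argument is short but uses heavier input.

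Your approach, by contrast, is purely algebraic: you peel off letters via the derivation property and then dispatch each letter by induction using identities $(6)$ and $(7)$ together with the base facts $[\iota,[\iota,\iota]]=0$ and $\beta[\iota,\iota]=0$. This is longer but self-contained---it needs only Cohen's description of $H_*(C_n(\C);\F_p)$ and the bracket/$Q$/$\beta Q$ identities, not the equivariant degeneration result. One small remark: the claim that ``all signs are trivial because $|\iota^k|=0$'' is not literally right (the derivation formula $(3)$ produces a sign $(-1)^{|y_j|}$ at each step), but since you are showing each summand vanishes this does not affect the argument.
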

	\begin{proof}
		Since the bracket is a derivation it is enough to prove that $[\iota,x]=0$. By Lemma \ref{lem:delta fa zero sui monomi divisibili per p} $\Delta(\iota x)=0$, therefore
		\[
		0=\Delta(\iota x)=\Delta(\iota)x\pm\iota\Delta(x)\pm[\iota,x]= \pm[\iota,x]
		\]
		The last equality holds because $\Delta(\iota)=0$ ($\iota$ is the top class of $H_*(C_1(\C);\F_p)$) and $\Delta(x)=0$ (Lemma \ref{lem:delta fa zero sui monomi divisibili per p}).
	\end{proof}
	
	\begin{lem}\label{lem:bracket tra iotaiota e x fa zero}
		Let $x\in H_*(C(\C),\F_p)$ be a monomial which contains only the letters $\{Q^i[\iota,\iota],\beta Q^i[\iota,\iota]\}_{i\geq1}$. Then $[[\iota,\iota],x]=0$.
	\end{lem}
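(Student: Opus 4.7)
The plan is to reduce via the derivation property of the bracket to the case where $x$ is a single generator, and then run an induction on the index $i$ of that generator. Writing $u \coloneqq [\iota,\iota]$ and decomposing $x = z_1 z_2 \cdots z_k$ with each $z_j \in \{Q^i[\iota,\iota], \beta Q^i[\iota,\iota]\}_{i \ge 1}$, iteration of property (3) (the bracket is a derivation in the second variable) gives
\[
[u, z_1 \cdots z_k] \;=\; \sum_{j=1}^{k} (-1)^{\varepsilon_j}\, z_1 \cdots z_{j-1}\, [u, z_j]\, z_{j+1} \cdots z_k
\]
for suitable signs $\varepsilon_j$, so it is enough to show $[u, z] = 0$ for a single generator $z = Q^i[\iota,\iota]$ or $z = \beta Q^i[\iota,\iota]$ with $i \ge 1$.

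Two preliminary facts feed the induction. First, $[u, u] = 0$: since $\abs{u} = 1$, graded anticommutativity (property (1)) gives $[u, u] = -[u, u]$, which forces $[u, u] = 0$ for odd $p$. Second, $\beta u = 0$: the Bockstein is a derivation with respect to $[-,-]$, and $\beta \iota = 0$ because $\iota$ is the mod-$p$ reduction of the integral generator of $H_0(C_1(\C);\Z)$, so $\beta[\iota, \iota] = [\beta \iota, \iota] \pm [\iota, \beta \iota] = 0$.

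With these in hand, I would prove $[u, Q^i[\iota,\iota]] = 0$ and $[u, \beta Q^i[\iota,\iota]] = 0$ simultaneously by induction on $i \ge 1$. For $i = 1$, property (6) gives $[u, Q u] = \mathrm{ad}^{p}(u)(u)$, whose innermost bracket is $[u, u] = 0$; property (7) gives $[u, \beta Q u] = [u, \mathrm{ad}^{p-1}(u)(\beta u)] = [u, 0] = 0$ since $\beta u = 0$. For the inductive step $i \ge 2$, setting $w \coloneqq Q^{i-1}[\iota,\iota]$, property (6) gives $[u, Q^i[\iota,\iota]] = \mathrm{ad}^{p}(w)(u)$ with innermost bracket $[u, w] = 0$ by induction, and property (7) gives $[u, \beta Q^i[\iota,\iota]] = [u, \mathrm{ad}^{p-1}(w)(\beta w)]$, which one expands via iterated application of the Jacobi identity (property (2)) into a sum of nested brackets each containing either $[u, w]$ or $[u, \beta w]$ in innermost position — both zero by the induction hypothesis.

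The main bookkeeping effort lies in the Jacobi expansion in the $\beta Q^i$ case, but it is routine once one sets up the induction as above. A superficially simpler route via property (4), rewriting $[u, x] = -\Delta(u \cdot x) + \Delta(u)\, x + u\, \Delta(x) = -\Delta(u \cdot x)$, is not decisive on its own: the monomial $u \cdot x$ lies in $H_*(C_{m+2}(\C);\F_p)$ with $m + 2 \equiv 2 \pmod p$, which is outside the range covered by Theorem \ref{thm: calcolo dell'omologia equivariante degli spazi di configurazione quando p divide n} and so does not automatically make $\Delta(u\cdot x)$ vanish.
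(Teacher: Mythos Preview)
Your proof is correct and follows the same overall scheme as the paper: reduce via the derivation property to a single generator, then induct on $i$ using properties (6) and (7). The treatment of $[u,Q^i u]$ is identical to the paper's.

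For $[u,\beta Q^i u]$ the two arguments diverge. You keep the outer bracket and expand $[u,\mathrm{ad}^{p-1}(w)(\beta w)]$ by iterated Jacobi, peeling off one $[-,w]$ at a time; each step produces a term containing $[u,w]$, and the residual term eventually reaches $[u,\beta w]$, both of which vanish by the inductive hypothesis. The paper instead shows that $\mathrm{ad}^{p-1}(w)(\beta w)$ is itself zero: its innermost bracket is $[\beta w,w]$, and this vanishes because the BV-relation $\Delta(\beta w\cdot w)=\Delta(\beta w)\,w\pm\beta w\,\Delta(w)\pm[\beta w,w]$ combined with Lemma~\ref{lem:delta fa zero sui monomi divisibili per p} forces all three $\Delta$-terms to zero (for $i=1$ one has $\beta w=\beta u=0$, as you noted). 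So the paper trades your Jacobi bookkeeping for an appeal to the earlier spectral-sequence input encoded in Lemma~\ref{lem:delta fa zero sui monomi divisibili per p}; your argument is more self-contained, while the paper's avoids the expansion entirely by killing the nested bracket before $u$ ever touches it.
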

	\begin{proof}
		Since the bracket is a derivation it is enough to show that $[[\iota,\iota],Q^i[\iota,\iota]]=0$ and that $[[\iota,\iota],\beta Q^i[\iota,\iota]]=0$. In the first case we proceed by induction: if $i=1$ we get $[[\iota,\iota],Q[\iota,\iota]]=ad^p([\iota,\iota])([\iota,\iota])$ by property (6), and the latter term is zero by Jacobi. In general we use again property (6) and we get:
		\begin{align*}
			[[\iota,\iota],Q^i[\iota,\iota]]=ad^p(Q^{i-1}[\iota,\iota])([\iota,\iota])=0
		\end{align*} 
		where the last equality holds by induction. The other formula can be proved as follows: by equation (7) we get
		\[
		[[\iota,\iota],\beta Q^{i}[\iota,\iota]]=[[\iota,\iota],ad^{p-1}(Q^{i-1}[\iota,\iota])(\beta Q^{i-1}[\iota,\iota])]
		\]
		One of the brackets contained in $ad^{p-1}(Q^{i-1}[\iota,\iota])(\beta Q^{i-1}[\iota,\iota])$ is $[\beta Q^{i-1}[\iota,\iota],Q^{i-1}[\iota,\iota]]$ which we claim is zero: 
		\begin{align*}
			0&=\Delta(\beta Q^{i-1}[\iota,\iota]Q^{i-1}[\iota,\iota])\\
			&=\Delta(\beta Q^{i-1}[\iota,\iota])Q^{i-1}[\iota,\iota]\pm \beta Q^{i-1}[\iota,\iota]\Delta(Q^{i-1}[\iota,\iota])\pm [\beta Q^{i-1}[\iota,\iota],Q^{i-1}[\iota,\iota]]\\
			&= \pm[\beta Q^{i-1}[\iota,\iota],Q^{i-1}[\iota,\iota]]
		\end{align*}
		where in the first and third equality we used Lemma \ref{lem:delta fa zero sui monomi divisibili per p}.
	\end{proof}
	\begin{prop}\label{prop: calcolo di delta su specifici monomi}
		Let $n\in\N$  and suppose $n\neq 0,1$ mod $p$. Let $\iota^kx$ and $\iota^k[\iota,\iota]x$ be classes is $H_*(C_n(\C);\F_p)$, where $x$ be a monomial which contains only the variables $\{Q^i[\iota,\iota], \beta Q^i[\iota,\iota]\}_{i\geq 1}$. Then
		\[
		\begin{cases}
			\Delta(\iota^kx)=k(k-1)\iota^{k-2}[\iota,\iota]x\\
			\Delta(\iota^{k}[\iota,\iota]x)=0  
		\end{cases}
		\]
	\end{prop}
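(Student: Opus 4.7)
My plan is to prove both identities by applying the derivation property (4) of $\Delta$ and then collapsing the resulting terms using the vanishing lemmas already established in this section. For the first identity, I would expand
\[
\Delta(\iota^k x) = \Delta(\iota^k)\,x + \iota^k\,\Delta(x) + [\iota^k, x],
\]
noting that the signs $(-1)^{|\iota^k|}$ all equal $+1$ because $\iota$ has degree zero. Lemma \ref{lem: calcolo di delta su iota alla k} gives $\Delta(\iota^k) = k(k-1)\iota^{k-2}[\iota,\iota]$, Lemma \ref{lem:delta fa zero sui monomi divisibili per p} gives $\Delta(x) = 0$, and Lemma \ref{lem: iota alla k bracket x fa zero} gives $[\iota^k, x] = 0$; so only the first term survives, yielding exactly the claimed formula.

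For the second identity the cleanest route is to exploit that $\Delta^2 = 0$ (true because $\Delta$ is induced by the fundamental class of $S^1$, whose square in $H_*(S^1)$ vanishes). By the formula just proved,
\[
\Delta(\iota^{k+2} x) \;=\; (k+2)(k+1)\,\iota^k[\iota,\iota]\,x.
\]
The assumption $n \not\equiv 0,1 \pmod p$, together with the fact that $x$ contributes a number of points divisible by $p$, forces $k+2 \equiv n \pmod p$, so both $k+1$ and $k+2$ are invertible in $\F_p$. Hence $\iota^k[\iota,\iota]\,x$ lies in the image of $\Delta$, and applying $\Delta$ once more gives zero.

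The step requiring the most care is the bookkeeping of signs in the derivation rule for $\Delta$, but since $\iota$ is in degree zero this reduces to parity considerations on $[\iota,\iota]$ and on $x$ and is routine. Should one wish to avoid invoking $\Delta^2 = 0$, a direct computation is also available: expand $\Delta(\iota^k \cdot [\iota,\iota] x)$ by (4), then expand $\Delta([\iota,\iota] x)$ by (4) again, and observe that every term dies, using $[\iota,\iota]^2 = 0$ in the exterior part of $H_*(C(\C);\F_p)$ (since $|[\iota,\iota]| = 1$ and $p$ is odd), $\Delta([\iota,\iota]) = 0$ (property (5) with $\Delta\iota = 0$), $[[\iota,\iota],x] = 0$ (Lemma \ref{lem:bracket tra iotaiota e x fa zero}), $[\iota^k, x] = 0$ (Lemma \ref{lem: iota alla k bracket x fa zero}), and $[\iota^k,[\iota,\iota]] = 0$ (which follows either by iterated Jacobi and the derivation property of the bracket, or by noting via Cohen's description that $[\iota,[\iota,\iota]]$ lives in a vanishing piece of $H_*(C_3(\C);\F_p)$).
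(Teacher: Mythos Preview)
Your proof of the first identity is identical to the paper's: expand via property~(4), then kill the two unwanted terms with Lemma~\ref{lem:delta fa zero sui monomi divisibili per p} and Lemma~\ref{lem: iota alla k bracket x fa zero}.

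For the second identity you take a genuinely different and slicker route. The paper argues by direct expansion: it writes $\Delta(\iota^k[\iota,\iota]x)$ as three terms via property~(4), and then shows each vanishes separately, in particular expanding $\Delta(\iota^k[\iota,\iota])$ once more and invoking $[\iota,\iota]^2=0$, $\Delta[\iota,\iota]=0$, and $[\iota,[\iota,\iota]]=0$. Your main argument instead observes that $\iota^{k+2}x$ lies in $H_*(C_n(\C);\F_p)$ (since $\iota^k[\iota,\iota]x$ does), applies the already-proved first identity to it, and then uses the hypothesis $n\not\equiv 0,1\pmod p$ to see that $(k+2)(k+1)$ is a unit, so that $\iota^k[\iota,\iota]x\in\mathrm{Im}(\Delta)$ and $\Delta^2=0$ finishes. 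This is a clean shortcut that actually \emph{uses} the hypothesis on $n$, whereas the paper's direct expansion (which you also sketch as an alternative) establishes $\Delta(\iota^k[\iota,\iota]x)=0$ without any congruence condition on $n$. The trade-off is that your argument imports the identity $\Delta^2=0$, which is standard for BV-operators but is not among the properties (1)--(7) listed in the paper; your one-line justification via $[S^1]\cdot[S^1]=0$ is adequate.
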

	\begin{proof}
		By Lemma \ref{lem: calcolo di delta su iota alla k} we have $\Delta(\iota^k)=k(k-1)\iota^{k-2}[\iota,\iota]$. Using property (4) of $\Delta$ together with Lemma \ref{lem:delta fa zero sui monomi divisibili per p} and Lemma \ref{lem: iota alla k bracket x fa zero} we get
		\[
		\Delta(\iota^k x)=\Delta(\iota^k)x\pm \iota^k\Delta(x)\pm [\iota^k,x]=k(k-1)\iota^{k-2}[\iota,\iota]
		\]
		and this proves the first part of the statement. Similarly, 
		\[
		\Delta(\iota^{k}[\iota,\iota] x)=\Delta(\iota^{k}[\iota,\iota])x\pm \iota^{k}[\iota,\iota]\Delta(x)\pm [\iota^{k}[\iota,\iota],x]
		\]
		The last term is zero since the bracket is a derivation and $[\iota^k,x]=0=[[\iota,\iota],x]$ (Lemma \ref{lem: iota alla k bracket x fa zero} and Lemma \ref{lem:bracket tra iotaiota e x fa zero}).  The middle term is zero by Lemma \ref{lem:delta fa zero sui monomi divisibili per p}. The first term is zero as well for the following reason:
		\[
		\Delta(\iota^{k}[\iota,\iota])=k(k-1)\iota^{k-2}[\iota,\iota][\iota,\iota]\pm \iota^k\Delta[\iota,\iota]\pm[\iota^k,[\iota,\iota]]
		\]
		The first term is zero since $[\iota,\iota]$ is a variable of odd degree, so it squares to zero. The second term is zero since $[\iota,\iota]$ is the top class of $H_*(C_2(\C);\F_p)$. To see that the last term is zero just use the fact that the bracket is a derivation and that $[\iota,[\iota,\iota]]=0$ (if $p\neq 3$ the Jacobi relation imply $[\iota,[\iota,\iota]]=0$, while for $p=3$ this iterated bracket is zero by definition).
	\end{proof}
	
	\section{Operations for odd degree classes}\label{sec: operations for odd degree classes}
	
	As we saw in Section \ref{sec: equiavariant operations} the homology operations for odd degree elements in a gravity algebra are governed by $H_*^{\Sigma_n}(\M_{0,n+1};\F_p(\pm 1))\cong H_*(B_n/Z(B_n);\F_p(\pm 1))$, where $p$ is any fixed prime. In this Section we compute this homology (Theorem \ref{thm:collasso sequenza spettrale per i labelled fiberwise config. spaces}). Observe that if $p=2$ the sign representation is actually the trivial representation, therefore we get an alternative computation of $H_*^{\Sigma_n}(\M_{0,n+1};\F_2)$. The techniques involved comes from the theory of fiberwise configurations spaces, which we will quickly review in Section \ref{sec: fiberwise configuration space} (further details can be found in \cite{Cohen-Maldonado} and \cite{Cohen-Bodigheimer}). 
	\subsection{Fiberwise configuration spaces}\label{sec: fiberwise configuration space}
	Let $\lambda:E\to B$ be a fiber bundle with fiber $Y$. We consider the following space of \textbf{(ordered) fiberwise configurations} of points
	\[
	E(\lambda,n)\coloneqq\{(e_1,\dots,e_n)\in E^n\mid e_i\neq e_j \text{ and } \lambda(e_i)=\lambda(e_j) \text{ if } i\neq j\}
	\]
	The symmetric group $\Sigma_n$ acts on $E(\lambda,n)$ by permuting the coordinates, so we can also define the space of \textbf{unordered fiberwise configurations} as the quotient $E(\lambda,n)/\Sigma_n$. In particular there are fiber bundles
	\begin{align*}
		& F_n(Y)\hookrightarrow E(\lambda,n)\to  B
		& C_n(Y)\hookrightarrow E(\lambda,n)/\Sigma_n\to B
	\end{align*}
	Now let $X$ be a connected CW-complex with basepoint $\ast$. We consider the following space of \textbf{fiberwise configurations with label in} $X$
	\begin{equation*}
		E(\lambda;X)\coloneqq \bigsqcup_{n=0}^{\infty}E(\lambda,n)\times_{\Sigma_n}X^n/\sim 
	\end{equation*}
	where $\sim$ is the equivalence relation determined by 
	\[
	(e_1,\dots,e_n)\times(x_1,\dots,x_n)\sim (e_1,\dots,\hat{e}_i,\dots,e_n)\times(x_1,\dots,\hat{x}_i,\dots,x_n)
	\]
	when $x_i=\ast$.
	\begin{oss}
		If the fiber bundle is given by a constant map $\lambda:E\to \{\ast\}$ the space $E(\lambda;X)$ is usually denoted by $C(E;X)$. These are just configuration of points on $E$ with label in $X$. 
	\end{oss}
	The spaces $E(\lambda;X)$ are equipped with a natural filtration by the number of points
	\[
	\{\ast\}\subseteq E_1(\lambda;X)\subseteq E_2(\lambda;X)\subseteq\dots\subseteq E(\lambda;X)
	\]
	where $E_k(\lambda;X)$ is the subspace 
	\[
	E_k(\lambda;X)\coloneqq \bigsqcup_{n=0}^{k}E(\lambda,n)\times_{\Sigma_n}X^n/\sim 
	\]
	In the literature it is standard to denote the quotient $E_k(\lambda;X)/E_{k-1}(\lambda;X)$ by $D_k(\lambda;X)$.
	
	\begin{thm}[[Proposition $2.3$ of \cite{Cohen-Bodigheimer}]\label{thm:stable splitting}
		Let $X$ be a connected CW-complex and $F\hookrightarrow E\xrightarrow{\lambda} B$ be a fiber bundle. Then $E(\lambda;X)$ is stably equivalent to $\bigvee_{k\in\N}D_k(\lambda;X)$. In particular we have a homology isomorphism
		\[
		\Tilde{H}_*(E(\lambda;X);\Z)\cong\bigoplus_{k=1}^{\infty}\Tilde{H}_*(D_k(\lambda;X);\Z)
		\]
		
	\end{thm}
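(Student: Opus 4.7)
The plan is to produce the splitting by first establishing that the filtration $E_{k-1}(\lambda;X) \subseteq E_k(\lambda;X)$ is a cofibration, then constructing, for each $k$, a stable collapse map $s_k \colon \Sigma^\infty E(\lambda;X)_+ \to \Sigma^\infty D_k(\lambda;X)$, and finally assembling these into a stable equivalence. The proof is a fiberwise adaptation of the Snaith splitting; everything we need to check reduces to the classical case $B = \{*\}$, plus a naturality/compatibility argument over the base.

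First I would verify that the inclusion $E_{k-1}(\lambda;X) \hookrightarrow E_k(\lambda;X)$ is a based cofibration. The subspace $E_{k-1}(\lambda;X)$ is cut out as the locus where at least one of the labels $x_i$ equals the basepoint $\ast \in X$. Since $X$ is a based CW-complex, a neighborhood deformation retraction of $\{\ast\}$ in $X$ induces, fiberwise in $\lambda$ and equivariantly in $\Sigma_n$, a neighborhood deformation retraction of $E_{k-1}(\lambda;X)$ in $E_k(\lambda;X)$. Hence each $D_k(\lambda;X) = E_k(\lambda;X)/E_{k-1}(\lambda;X)$ is well behaved and the filtration gives honest cofiber sequences.

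Next I would construct the stable splitting maps $s_k$ by a fiberwise Pontryagin--Thom collapse. Choose a fiberwise Riemannian metric on $\lambda \colon E \to B$ (possible because $B$ is paracompact) and, for a configuration $(e_1,\dots,e_n) \in E(\lambda,n)$, let $\rho(e_1,\dots,e_n) = \tfrac{1}{3}\min_{i\neq j} d_F(e_i,e_j)$ be one third of the minimal distance in the fiber. Over a trivializing chart $U \subseteq B$ this recovers the classical construction used by Cohen--May--Taylor/Snaith for $C(F;X)$: the exponential disks of radius $\rho$ around the $e_i$ are disjoint and define a Pontryagin--Thom collapse from $E_n(\lambda;X)|_U$ onto the $k$-th layer; the collapse is defined for all $n \geq k$ by choosing any $k$ of the configuration points (summed over the $\binom{n}{k}$ choices, which is well defined after stabilization). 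Because the construction uses only the fiberwise distance, it is natural in $B$ and $\Sigma_n$-equivariant, so the local splittings glue to a global stable map $s_k$.

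Assembling, the maps $(s_k)$ define a stable map
\[
S \colon \Sigma^\infty E(\lambda;X)_+ \longrightarrow \prod_{k\geq 1} \Sigma^\infty D_k(\lambda;X).
\]
Because $X$ is connected, $D_k(\lambda;X)$ has connectivity tending to $\infty$ with $k$, so the product coincides stably with $\bigvee_k \Sigma^\infty D_k(\lambda;X)$. To conclude that $S$ is a stable equivalence, I would verify by induction on $k$ that $s_k$ restricted to $E_k(\lambda;X)$ is a stable retraction of the cofiber sequence $E_{k-1}(\lambda;X) \hookrightarrow E_k(\lambda;X) \to D_k(\lambda;X)$; this makes each cofiber sequence split stably, hence $\Sigma^\infty E_k(\lambda;X) \simeq \bigvee_{j\leq k} \Sigma^\infty D_j(\lambda;X)$, and passing to the colimit in $k$ gives the theorem. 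The homology statement is then immediate, and in particular the filtration spectral sequence collapses with no extensions.

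The main technical obstacle is the second step: producing a Pontryagin--Thom collapse that is simultaneously continuous on configuration space, $\Sigma_n$-equivariant, and natural in the base $B$. Everything rests on choosing the radius function $\rho$ from the fiberwise metric rather than any local trivialization, which is the single idea that makes the classical Snaith construction go through in the fibered setting; once this is in place, the splitting and the homology corollary follow as in the non-fibered case \cite{Cohen-Bodigheimer}.
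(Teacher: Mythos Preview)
The paper does not prove this theorem; it is quoted verbatim from \cite{Cohen-Bodigheimer} as an input, so there is no ``paper's own proof'' to compare against. Your sketch is in the spirit of the original Snaith/Cohen--May--Taylor argument and of B\"odigheimer's fiberwise version: establish that the filtration by number of points is a cofibration filtration, build stable retractions $s_k$ onto the layers via a Pontryagin--Thom collapse, and conclude by induction on $k$ together with the connectivity of $D_k(\lambda;X)$ going to infinity (using that $X$ is connected). That outline is correct and is essentially how the cited reference proceeds.

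One caveat: your construction of the collapse maps relies on a fiberwise Riemannian metric and exponential disks, which tacitly assumes the fiber $F$ is a smooth manifold. The theorem as stated in the paper makes no such hypothesis on the fiber bundle. In B\"odigheimer's original treatment the collapse is produced more combinatorially (via a ``spreading out'' map on labelled configurations, or equivalently via the power operation on the James/configuration model), which works for an arbitrary fiber. For the application in this paper the fiber is $\C$, so your metric approach is perfectly adequate there; but if you want to match the full generality of the cited statement you should either add a manifold hypothesis on $F$ or replace the Pontryagin--Thom step by the combinatorial transfer used in \cite{Cohen-Bodigheimer}.
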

	\begin{prop}[\cite{Cohen-Maldonado}, p. 8]\label{prop:quasi-iso al livello delle catene}
		Let $X$ be a connected CW complex with basepoint $\ast\in X$ and $\mathbb{F}$ be any field. Then we have a quasi-isomorphism
		\[
		C_*(E(\lambda,k))\otimes_{\Sigma_k}\Tilde{H}_*(X)^{\otimes k}\to C_*(D_k(\lambda;X))
		\]
		where $C_*(-)$ are the singular chains with $\mathbb{F}$-coefficients and $\Sigma_k$ acts on the graded $\mathbb{F}$-vector space $\Tilde{H}_*(X)^{\otimes k}$ by permutation of variables with the usual sign convention.
	\end{prop}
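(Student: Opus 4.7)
The plan is to identify $D_k(\lambda;X)$ with a Borel-type construction on which $\Sigma_k$ acts freely, so that the singular chain complex splits as a tensor product over $\mathbb{F}[\Sigma_k]$, and then to use field coefficients to replace $X$ by its homology. More precisely, the key observation is that the filtration quotient admits a clean description:
\[
D_k(\lambda;X)=E_k(\lambda;X)/E_{k-1}(\lambda;X)\cong E(\lambda,k)_+\wedge_{\Sigma_k}X^{\wedge k},
\]
because the only configurations in $E_k(\lambda;X)$ that sit inside $E_{k-1}(\lambda;X)$ are those where at least one label equals $\ast$, and collapsing these is exactly the passage from $X^k$ to $X^{\wedge k}$.

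Starting from this identification, the plan proceeds in three steps. First, since distinct points in a fiberwise configuration must all be distinct, $\Sigma_k$ acts freely on $E(\lambda,k)$; consequently the singular chain complex $C_*(E(\lambda,k))$ is degreewise free as an $\mathbb{F}[\Sigma_k]$-module, and the projection $E(\lambda,k)\times X^{\wedge k}\to E(\lambda,k)_+\wedge_{\Sigma_k}X^{\wedge k}$ (suitably interpreted with the basepoint) yields a natural quasi-isomorphism
\[
C_*(E(\lambda,k))\otimes_{\Sigma_k}\Tilde{C}_*(X^{\wedge k})\xrightarrow{\simeq} C_*(D_k(\lambda;X)).
\]
Second, an iterated $\Sigma_k$-equivariant Eilenberg--Zilber shuffle map provides a quasi-isomorphism
\[
\Tilde{C}_*(X)^{\otimes k}\xrightarrow{\simeq}\Tilde{C}_*(X^{\wedge k})
\]
which is equivariant for the natural $\Sigma_k$-action on both sides by permutation of tensor factors (with Koszul signs). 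Third, since $\mathbb{F}$ is a field the chain complex $\Tilde{C}_*(X)$ is quasi-isomorphic to its homology $\Tilde{H}_*(X)$ (viewed as a chain complex with zero differential); choose such a quasi-isomorphism $\Tilde{H}_*(X)\to\Tilde{C}_*(X)$ and take the $k$-th tensor power to obtain a $\Sigma_k$-equivariant quasi-isomorphism $\Tilde{H}_*(X)^{\otimes k}\to\Tilde{C}_*(X)^{\otimes k}$.

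Composing the three steps and tensoring over $\mathbb{F}[\Sigma_k]$ with $C_*(E(\lambda,k))$ produces the desired map. The only delicate point, and what I expect to be the main obstacle, is that the functor $C_*(E(\lambda,k))\otimes_{\Sigma_k}(-)$ must preserve quasi-isomorphisms; this is not automatic when $\mathrm{char}(\mathbb{F})$ divides $k!$, but it holds here precisely because $\Sigma_k$ acts freely on $E(\lambda,k)$, which makes $C_*(E(\lambda,k))$ degreewise free—hence flat—over $\mathbb{F}[\Sigma_k]$, so tensoring over $\Sigma_k$ is degreewise exact and preserves quasi-isomorphisms. The other non-trivial point is checking the $\Sigma_k$-equivariance of the Eilenberg--Zilber map, which follows from its naturality and the symmetry of the shuffle formula.
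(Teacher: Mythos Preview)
The paper does not give its own proof of this proposition: it is quoted verbatim from \cite{Cohen-Maldonado} and used as a black box. So there is no in-paper argument to compare against.

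Your proposed argument is correct and is essentially the standard one. The identification $D_k(\lambda;X)\cong E(\lambda,k)_+\wedge_{\Sigma_k}X^{\wedge k}$ is exactly right, and the three ingredients you isolate---freeness of the $\Sigma_k$-action on $E(\lambda,k)$ (hence degreewise projectivity of $C_*(E(\lambda,k))$ over $\mathbb{F}[\Sigma_k]$), the $\Sigma_k$-equivariance of the Eilenberg--Zilber shuffle map, and the formality of $\Tilde{C}_*(X)$ over a field---are precisely what is needed. Your justification of the one genuinely delicate step, that $C_*(E(\lambda,k))\otimes_{\Sigma_k}(-)$ preserves quasi-isomorphisms, is also correct: a bounded-below complex of projectives over $\mathbb{F}[\Sigma_k]$ computes the derived tensor product, so tensoring with it is homotopy-invariant. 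One small point worth making explicit is that the $\Sigma_k$-action on $E(\lambda,k)_+\wedge X^{\wedge k}$ fixes the basepoint but is free elsewhere, which is why passing to reduced chains and then to coinvariants still models the quotient; you gesture at this with ``suitably interpreted with the basepoint'', and it is harmless, but a referee might ask you to spell it out.
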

	
	\subsection{A model for $B(B_n/Z(B_n))$ with fiberwise configuration spaces}\label{sec: model of B_n/Z as fiberwise configuration space}
	Let $\C\hookrightarrow E\xrightarrow{\lambda} \C P^{\infty}$ be the tautological line bundle. Explicitly, the total space is 
	\[
	E\coloneqq\{(v,l)\in \C^{\infty}\times \C P^{\infty}\mid v\in l\}
	\]
	and $\lambda: E\to \C P^{\infty}$ is the projection on the second coordinate. The most important observation of this section is the following proposition:
	\begin{prop}\label{prop:quozienti omotopici come fiberwise configurations}
		Let $n\geq 2$. The unordered fiberwise configurations $E(\lambda,n)/\Sigma_n$ is a model for the classifying space  $B(B_n/Z(B_n))$. Similarly, $E(\lambda,n)$ is a model for the classifying space $B(PB_n/Z(PB_n))$. 
	\end{prop}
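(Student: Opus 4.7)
My plan is to identify $E(\lambda,n)$ with the Borel construction $F_n(\C)_{S^1}$ and $E(\lambda,n)/\Sigma_n$ with $C_n(\C)_{S^1}$, and then appeal to Proposition \ref{prop: quizienti omotopici come spazi classificanti}. The first step is to recognize the tautological line bundle $\lambda$ as the standard associated bundle of $ES^1 \to BS^1$ via the scalar multiplication action of $S^1$ on $\C$: taking $ES^1 = S^\infty$, the map $S^\infty \times \C \to E$ sending $(u,z)$ to $(zu,[u])$ is invariant under the $S^1$-action $\mu\cdot(u,z) = (\mu u,\mu^{-1}z)$ and descends to a bundle isomorphism $ES^1 \times_{S^1} \C \xrightarrow{\sim} E$ covering the identity of $\C P^\infty$.

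Next I would invoke the general principle that if $\pi\colon P\to B$ is a principal $G$-bundle and $F$ is a $G$-space, then the fiberwise ordered (resp.\ unordered) configuration space of the associated bundle $P \times_G F \to B$ is canonically $P \times_G F_n(F) \to B$ (resp.\ $P \times_G C_n(F) \to B$), where $G$ acts diagonally on $F_n(F)$. This is formal: a local trivialization of $P$ induces a local trivialization of the fiberwise configuration bundle, and the transition functions are inherited from those of $P \times_G F$ via the diagonal action. Specializing to $G=S^1$, $P=ES^1$, $F=\C$, and using that the $\Sigma_n$-action on $F_n(\C)$ by permutation commutes with the $S^1$-action by rotation, one obtains
\[
E(\lambda,n) \;\cong\; ES^1 \times_{S^1} F_n(\C) \;=\; F_n(\C)_{S^1},\qquad E(\lambda,n)/\Sigma_n \;\cong\; ES^1 \times_{S^1} C_n(\C) \;=\; C_n(\C)_{S^1}.
\]

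Finally, Proposition \ref{prop: quizienti omotopici come spazi classificanti} already establishes that $F_n(\C)_{S^1}$ is a model for $B(PB_n/Z(PB_n))$ and that $C_n(\C)_{S^1}$ is a model for $B(B_n/Z(B_n))$, so the statement follows immediately. There is no substantial obstacle: once the tautological bundle is recognized as the Borel construction of the $S^1$-action on $\C$, the identification of its fiberwise configuration spaces with Borel constructions of $F_n(\C)$ and $C_n(\C)$ is a formal consequence of the associated bundle construction, and the group-theoretic identification has already been carried out earlier.
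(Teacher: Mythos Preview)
Your argument is correct and takes a genuinely different route from the paper. The paper argues directly from the fibration $C_n(\C)\hookrightarrow E(\lambda,n)/\Sigma_n\to\C P^{\infty}$: the long exact sequence of homotopy groups immediately gives $\pi_i=0$ for $i\geq 3$, and the remaining issue is to identify the connecting map $\partial:\pi_2(\C P^{\infty})\cong\Z\to B_n$. This is done by writing down an explicit map $f:S^{\infty}\to E(\lambda,n)/\Sigma_n$, $v\mapsto\{(v,l_v),(\zeta v,l_v),\dots,(\zeta^{n-1}v,l_v)\}$ with $\zeta=e^{2\pi i/n}$, which fits into a map of fibrations over $\C P^{\infty}$ and shows that $\partial$ hits the full twist $\delta^2$, i.e.\ the generator of $Z(B_n)$.

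Your approach instead recognizes the tautological bundle as the associated bundle $ES^1\times_{S^1}\C$ and then uses the formal fact that fiberwise configurations in an associated bundle are again an associated bundle, yielding $E(\lambda,n)\cong F_n(\C)_{S^1}$ and $E(\lambda,n)/\Sigma_n\cong C_n(\C)_{S^1}$; the conclusion then follows from Proposition~\ref{prop: quizienti omotopici come spazi classificanti}. This is conceptually cleaner and has the pleasant side effect of making explicit the identification between the two models of $B(B_n/Z(B_n))$ appearing in the paper (the Borel construction of Section~\ref{sec: quoziente omotopico come spazio classificante} and the fiberwise model here), rather than leaving them as two separate computations. The paper's argument, on the other hand, is more self-contained and does not rely on the associated-bundle formalism; its explicit map $f$ also foreshadows the kind of geometric comparison used later in the proof of Theorem~\ref{thm:collasso sequenza spettrale per i labelled fiberwise config. spaces}.
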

	\begin{proof}
		We prove the statement for $E(\lambda,n)/\Sigma_n$, the other case is analogous. Consider the fibration $C_n(\C)\hookrightarrow E(\lambda,n)/\Sigma_n\to \C P^{\infty}$. The long exact sequence for homotopy groups shows that $\pi_i(E(\lambda,n)/\Sigma_n)=0$ for all $i\geq 3$. Moreover, we get the following exact sequence:
		\[
		\begin{tikzcd}[column sep=small]
			&0\arrow[r] &\pi_2(E(\lambda,n)/\Sigma_n) \arrow[r] & \Z \arrow[r, "\partial"] & B_n\arrow[r] &\pi_1(E(\lambda,n)/\Sigma_n)\arrow[r] & 0
		\end{tikzcd}
		\]
		Now we claim that the connecting homomorphism $\partial$ includes $\Z$ as the center of $B_n$. To prove this, let us consider the map 
		\begin{align*}
			f:S^{\infty} &\to E(\lambda,n)/\Sigma_n\\
			v&\mapsto \{(v,l_v), (\zeta\cdot v,l_v),\dots,(\zeta^{n-1}\cdot v,l_v)\}
		\end{align*}
		where $\zeta\coloneqq e^{2\pi i/n}$ acts on $S^{\infty}\subseteq \C^{\infty}$ by multiplication and $l_v$ denotes the line spanned by $v$. This is a map of fibrations
		\[
		\begin{tikzcd}
			& C_n(\C) \arrow[d,hook] & S^1 \arrow[l] \arrow[d,hook]\\
			& E(\lambda,n)/\Sigma_n \arrow[d] & S^{\infty} \arrow[l,"f"] \arrow[d]\\
			& \C P^{\infty} & \C P^{\infty} \arrow[l]
		\end{tikzcd}
		\]
		so we get the following commutative diagram, whose rows are exact:
		\[
		\begin{tikzcd}[column sep=small]
			&0\arrow[r] &\pi_2(E(\lambda,n)/\Sigma_n) \arrow[r] & \Z \arrow[r, "\partial"] & B_n\arrow[r] &\pi_1(E(\lambda,n)/\Sigma_n)\arrow[r] & 0\\
			&0\arrow[r] \arrow[u] &0 \arrow[r] \arrow[u] & \Z \arrow[u,"id"]\arrow[r, "\cong"] & \Z \arrow[r]\arrow[u,"f_*"]&0\arrow[r] \arrow[u]& 0 \arrow[u]
		\end{tikzcd}
		\]
		Finally observe that $f_*$ includes $\Z$ as the center of $B_n$, so the same holds for $\partial$.
	\end{proof}
	\begin{oss}
	 Proposition \ref{prop:quozienti omotopici come fiberwise configurations} is not true for $n=1$. Indeed in this case $B_1/Z(B_1)=PB_1/Z(PB_1)$ is the trivial group and $E(\lambda,1)/\Sigma_1=E(\lambda,1)$ is the total space $E$ of the tautological line bundle $\lambda:E\to \C P^{\infty}$, which is not contractible.
	\end{oss}
	\begin{oss}
		The Braid group $B_n$ is equipped with a natural morphism to $\Sigma_n$, whose kernel is the pure braid group $PB_n$. Since this morphism sends the generator of the center $\delta^2$ to the identity permutation, there is a factorization
		\[
		\begin{tikzcd}
			&1\arrow[r] & PB_n \arrow[d] \arrow[r,hook]& B_n \arrow[d]\arrow[r] & \Sigma_n\arrow[r]\arrow[d] & 1\\
			&1\arrow[r] & PB_n/Z(PB_n)\arrow[r,hook]& B_n/Z(B_n) \arrow[r] & \Sigma_n\arrow[r] & 1\\
		\end{tikzcd}
		\]
		Therefore we can regard any $\Sigma_n$-module as a $B_n/Z(B_n)$ module.
	\end{oss}
	
	Now let $V$ be a graded vector space over some field $\F$, and assume that $V$ in concentrated in degree greater than $1$. In this case we can always find a bouquet of spheres $S_V$ such that
	\[
	V\cong \Tilde{H}_*(S_V;\F)
	\]
	We assume that the symmetric group $\Sigma_n$ acts on $V^{\otimes n}$ with the usual sign conventions. By the previous remark we can see $V^{\otimes n}$ as a $B_n/Z(B_n)$-module.

	\begin{prop}\label{prop: interpretazione come fiberwise configurations}
		Let $\F$ be any field, $q\in\N$, $n\geq 2$. Then we have an isomorphism
		\[
		H_*(B_n/Z(B_n);V^{\otimes n})\cong \Tilde{H}_{*}(D_n(\lambda;S_V);\F)
		\]
		In particular, if we choose $V$ to be a copy of $\F$ concentrated in degree $2q+1$ (resp. $2q$) we get
		\begin{align}
			& H_*(B_n/Z(B_n);\F(\pm 1))\cong H_{*+(2q+1)n}(D_n(\lambda;S^{2q+1});\F)\\
			& H_*(B_n/Z(B_n);\F)\cong H_{*+2qn}(D_n(\lambda;S^{2q});\F)
		\end{align} 
		
	\end{prop}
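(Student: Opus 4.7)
The plan is to combine the Cohen–Maldonado quasi-isomorphism of Proposition \ref{prop:quasi-iso al livello delle catene} with the identification of $E(\lambda,n)/\Sigma_n$ as a model for $B(B_n/Z(B_n))$ from Proposition \ref{prop:quozienti omotopici come fiberwise configurations}. Choose $S_V$ so that $V=\tilde H_*(S_V;\F)$; applying Proposition \ref{prop:quasi-iso al livello delle catene} with $X=S_V$ gives a quasi-isomorphism
\[
C_*(E(\lambda,n))\otimes_{\Sigma_n} V^{\otimes n}\xrightarrow{\simeq} C_*(D_n(\lambda;S_V)).
\]
The whole content of the proposition is therefore to identify the left-hand side, after taking homology, with $H_*(B_n/Z(B_n);V^{\otimes n})$.

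For this, set $G\coloneqq B_n/Z(B_n)$ and $N\coloneqq PB_n/Z(PB_n)$, a normal subgroup with quotient $G/N\cong\Sigma_n$. Let $\tilde Y\to E(\lambda,n)/\Sigma_n$ be the universal cover; since $E(\lambda,n)/\Sigma_n\simeq BG$, the space $\tilde Y$ is contractible and $G$ acts freely on it, so $C_*(\tilde Y;\F)$ is a free $\F[G]$-resolution of the trivial module $\F$. By Proposition \ref{prop:quozienti omotopici come fiberwise configurations} the intermediate cover of $E(\lambda,n)/\Sigma_n$ corresponding to $N\trianglelefteq G$ is precisely $E(\lambda,n)$, i.e.\ $E(\lambda,n)=\tilde Y/N$. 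Since $N$ acts freely,
\[
C_*(E(\lambda,n))\;\cong\;C_*(\tilde Y)\otimes_{\F[N]}\F,
\]
and the residual $\Sigma_n=G/N$-action on the right-hand side is the one coming from the $\Sigma_n$-action on $E(\lambda,n)$.

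The key algebraic step is then a change-of-rings identity: for any $\F[G]$-module $P$ and any $\F[\Sigma_n]$-module $M$, inflating $M$ to a $G$-module through $G\twoheadrightarrow\Sigma_n$ gives
\[
(P\otimes_{\F[N]}\F)\otimes_{\F[\Sigma_n]}M\;\cong\;P\otimes_{\F[G]}M,
\]
as is immediate from comparing the defining relations $ph\otimes m\sim p\otimes m$ (for $h\in N$) and $pg\otimes m\sim p\otimes gm$ (for $g\in G$). Applying this with $P=C_*(\tilde Y)$ and $M=V^{\otimes n}$ (where $V^{\otimes n}$ is viewed as a $G$-module via the sign-like $\Sigma_n$-action pulled back through $G\to\Sigma_n$), we obtain
\[
C_*(E(\lambda,n))\otimes_{\Sigma_n}V^{\otimes n}\;\cong\;C_*(\tilde Y)\otimes_{\F[G]}V^{\otimes n},
\]
and the homology of the right-hand side is by definition $H_*(G;V^{\otimes n})=H_*(B_n/Z(B_n);V^{\otimes n})$. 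Combining this with the quasi-isomorphism above yields the required isomorphism with $\tilde H_*(D_n(\lambda;S_V);\F)$, and the two displayed corollaries then follow by specialising $V$ to be $\F$ placed in degree $2q+1$ or $2q$, in which case $V^{\otimes n}$ is one-dimensional in total degree $(2q+1)n$ or $2qn$ (carrying the sign or trivial $\Sigma_n$-representation respectively), so that the isomorphism only introduces the stated degree shift.

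The main obstacle I anticipate is not topological but the careful bookkeeping in the change-of-rings identity above, in particular making sure the $\Sigma_n$-action appearing from the deck transformation on $\tilde Y/N$ matches the $\Sigma_n$-action already present on $V^{\otimes n}$ from the sign conventions of Proposition \ref{prop:quasi-iso al livello delle catene}; once this is checked, the rest is essentially formal covering-space theory and the Cohen–Maldonado quasi-isomorphism.
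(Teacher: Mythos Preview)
Your proposal is correct and follows essentially the same route as the paper: both identify $E(\lambda,n)$ with the cover of $B(B_n/Z(B_n))$ corresponding to the normal subgroup $PB_n/Z(PB_n)$, use the change-of-rings identity $(P\otimes_{\F[N]}\F)\otimes_{\F[\Sigma_n]}M\cong P\otimes_{\F[G]}M$ (the paper writes this as $C_*^{cell}(EG_n)\otimes_{G_n}V^{\otimes n}\cong C_*^{cell}(EG_n)_{H_n}\otimes_{\Sigma_n}V^{\otimes n}$), and then invoke the Cohen--Maldonado quasi-isomorphism. Your version is in fact slightly cleaner in that you take $\tilde Y$ to be the universal cover of $E(\lambda,n)/\Sigma_n$ itself, so that $\tilde Y/N=E(\lambda,n)$ on the nose rather than only up to homotopy.
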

	\begin{proof}
		To ease the notation, let us call $G_n\coloneqq B_n/Z(B_n)$ and  $H_n\coloneqq PB_n/Z(PB_n)$. By definition $H_*(G_n; V^{\otimes n})$ is computed by $C_*^{cell}(EG_n)\otimes_{G_n}V^{\otimes n}$, where if $G$ is a discrete group $C_*^{cell}(EG_n)$ denotes the standard resolution of $\Z$ over $\Z[G]$ . Since $H_n$ is a subgroup of $G_n$, we can take $EG_n$ as a model for $EH_n$. Therefore:
		\[
		C_*^{cell}(EG_n)\otimes_{G_n}V^{\otimes n}\cong \frac{C_*^{cell}(EG_n)\otimes_{H_n}V^{\otimes n}}{\Sigma_n}\cong C_*^{cell}(EG_n)_{H_n}\otimes_{\Sigma_n}V^{\otimes n}
		\]
		where the last isomorphism holds because $H_n$ acts trivially on $V^{\otimes n}$. Note that $C_*^{cell}(EG_n)_{H_n}$ computes the homology of $H_n$. Proposition \ref{prop:quozienti omotopici come fiberwise configurations} tell us that $E(\lambda,n)$ is a model for the classifying space of $PB_n/Z(PB_n)$ and Proposition \ref{prop:quasi-iso al livello delle catene} give us the quasi-isomorphism
		\[
		C_*(E(\lambda,n))\otimes_{\Sigma_n}\Tilde{H}_*(S_V)^{\otimes n}\to C_*(D_n(\lambda;S_V))
		\]
		which allows us to conclude the proof.
	\end{proof}
	
	\subsection{Computations}
	By Proposition \ref{prop: interpretazione come fiberwise configurations} and Theorem \ref{thm:stable splitting} the computation of $H_*(B_n/Z(B_n);\F_p(\pm 1))$ is reduced to the computation of $H_*(E(\lambda;S^{2q+1});\F_p)$. 
	
	\begin{thm}\label{thm:collasso sequenza spettrale per i labelled fiberwise config. spaces}
		Let $p$ be an odd prime. Then
		\[
		H_*(E(\lambda;S^{2q+1});\F_p)\cong H_*(C(\C;S^{2q+1});\F_p)\otimes H_*(BS^1;\F_p)
		\]
	\end{thm}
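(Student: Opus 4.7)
The plan is to identify $E(\lambda;S^{2q+1})$ as a Borel construction and then force the resulting Serre spectral sequence to collapse. First I would observe that the tautological line bundle admits the presentation $E\cong S^\infty\times_{S^1}\C$, with $S^1$ acting on $\C$ by scalar multiplication. Applying this description fiberwise to configurations yields a natural homotopy equivalence
\[
E(\lambda;S^{2q+1}) \;\simeq\; S^\infty\times_{S^1}C(\C;S^{2q+1}) \;=\; C(\C;S^{2q+1})_{hS^1},
\]
where $S^1$ rotates the ambient plane $\C$ in which configurations live (and acts trivially on the labels). This produces a Borel fibration
\[
C(\C;S^{2q+1})\hookrightarrow E(\lambda;S^{2q+1})\to BS^1.
\]

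Since $BS^1$ is simply connected the monodromy is trivial, so the Serre spectral sequence has $E^2_{s,t}=H_t(C(\C;S^{2q+1});\F_p)\otimes H_s(BS^1;\F_p)$. Proposition \ref{prop: differenziale dato da delta} identifies the $d^2$ differential with the BV-operator $\Delta$ on $H_*(C(\C;S^{2q+1});\F_p)$. The desired tensor-product decomposition is therefore equivalent to showing that $\Delta\equiv 0$ on this homology, since by the mechanism of Proposition \ref{prop: sequenza spettrale di X per cerchio} the higher differentials then vanish automatically, and $\F_p$-coefficients avoid extension issues.

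To establish $\Delta\equiv 0$ I would invoke Cohen's computation (Theorem \ref{thm: omologia spazi di configurazione etichettati}): $H_*(C(\C;S^{2q+1});\F_p)$ is the free graded-commutative algebra on the fundamental class $x\in H_{2q+1}(S^{2q+1})$ together with the Dyer--Lashof generators $Q^ix$ and $\beta Q^ix$ for $i\geq 1$ (because $x$ has odd degree and is the unique basic bracket of weight one, no iterated Browder brackets appear). Using the derivation-plus-bracket identity
\[
\Delta(ab)=\Delta(a)b+(-1)^{|a|}a\Delta(b)+(-1)^{|a|}[a,b],
\]
the task reduces to proving that $\Delta$ vanishes on every Dyer--Lashof generator and that all pairwise Browder brackets among these generators vanish as well. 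Vanishing on $x$ is purely dimensional, since $x\in H_{2q+1}(C_1(\C;S^{2q+1}))\cong H_{2q+1}(S^{2q+1})$ and $H_{2q+2}(S^{2q+1})=0$. For $Q^ix$ and $\beta Q^ix$, which live in $H_*(C_{p^i}(\C;S^{2q+1});\F_p)$ where the number of points is divisible by $p$, I would proceed by induction on $i$, using the identities $[a,Qb]=\mathrm{ad}^{p}(b)(a)$ and $\Delta[a,b]=[\Delta a,b]+(-1)^{|a|+1}[a,\Delta b]$ to reduce to the base case $\Delta x=0$, in direct analogy with Lemmas \ref{lem:delta fa zero sui monomi divisibili per p}--\ref{lem:bracket tra iotaiota e x fa zero}.

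The main obstacle is precisely this last step: transplanting the bracket-vanishing arguments of Section \ref{sec: proprietà della delta} to the labelled setting. In the unlabelled case, Lemma \ref{lem:delta fa zero sui monomi divisibili per p} was bootstrapped from Theorem \ref{thm: calcolo dell'omologia equivariante degli spazi di configurazione quando p divide n}, which is not directly available for $C_n(\C;S^{2q+1})$. One could either prove a labelled analogue of that theorem by computing the $\Z/p$-fixed points of $C_n(\C;S^{2q+1})$ (identifying them with $S^{2q+1}$ times a labelled configuration space of orbits in $\C^*$) and applying Theorem \ref{thm:tom Dieck degeneration theorem}, or one could derive the bracket vanishings directly from the Lie-algebraic relations at the start of Section \ref{sec: proprietà della delta}, exploiting the fact that $x$ has odd degree, so $[x,x]=0$ and many self-brackets collapse for parity reasons. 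Either route bypasses the technicalities and yields the desired collapse of the spectral sequence.
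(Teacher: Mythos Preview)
Your approach differs from the paper's and contains a genuine gap.

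The paper's argument is short and geometric. It defines a map
\[
\psi:E(\lambda;S^{2q+1})\to C(\C^\infty;S^{2q+1}),\qquad [((v_1,l),\dots,(v_n,l))\times(p_1,\dots,p_n)]\mapsto [(v_1,\dots,v_n)\times(p_1,\dots,p_n)],
\]
which simply forgets the line $l$ and keeps the points in $\C^\infty$. The fiber inclusion $i:C(\C;S^{2q+1})\hookrightarrow E(\lambda;S^{2q+1})$ and the stabilization $j:C(\C;S^{2q+1})\to C(\C^\infty;S^{2q+1})$ satisfy $j=\psi\circ i$. Since $j_*$ is injective on mod~$p$ homology (a known fact from Cohen's work, valid precisely because the label sphere has odd dimension), $i_*$ is injective as well, and the Serre spectral sequence collapses at $E^2$. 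No analysis of $\Delta$ or of higher differentials is required.

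The gap in your argument is the assertion that once $\Delta=0$ the higher differentials vanish ``by the mechanism of Proposition~\ref{prop: sequenza spettrale di X per cerchio}''. That proposition treats only the very special space $S^1\times X$ with the translation action; it says nothing about a general Borel fibration with $\Delta=0$. And the implication is false in general: take $X=S^3$ with the Hopf $S^1$-action. Here $\Delta$ vanishes for degree reasons (it would land in $H_1(S^3)$ or $H_4(S^3)$), yet $S^3_{hS^1}\simeq \C P^1$, whose homology is certainly not $H_*(S^3)\otimes H_*(BS^1)$; the spectral sequence has a nonzero $d^4$ transgressing $[S^3]$. So even if you complete the verification that $\Delta\equiv 0$ on $H_*(C(\C;S^{2q+1});\F_p)$---and, as you yourself note, that step is not yet finished---you would still need an entirely separate argument to rule out the higher differentials. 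The paper's injectivity-of-$i_*$ trick disposes of all of this in one stroke.
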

	\begin{proof}
		We get the statement by proving that the Serre spectral sequence with $\F_p$-coefficients associated to the fibration $C(\C;S^{2q+1})\hookrightarrow E(\lambda;S^{2q+1})\to BS^1$ degenerates at page $E_2$. In particular, it suffices to show that the map induced by the inclusion $i_*:H_*(C(\C;S^{2q+1});\F_p)\to H_*(E(\lambda;S^{2q+1});\F_p)$ is injective. Consider the following map 
		\begin{align*}
			\psi:E(\lambda;S^{2q+1})&\to C(\C^{\infty};S^{2q+1})\\
			[((v_1,l),\dots,(v_n,l))\times (p_1,\dots,p_n)]&\mapsto [(v_1,\dots,v_n)\times (p_1,\dots,p_n)]
		\end{align*}
		where $(v_i,l)$ are points in the total space of the tautological line bundle $\C\hookrightarrow E\to \C P^{\infty}$, and $p_i\in S^{2q+1}$ are the corresponding labels. Now observe that the inclusion $j:C(\C;S^{2q+1})\to C(\C^{\infty};S^{2q+1})$ factors through $E(\lambda;S^{2q+1})$:
		\[
		\begin{tikzcd}
			&C(\C;S^{2q+1})\arrow[r,"j"] \arrow[d,"i"] &C(\C^{\infty};S^{2q+1}) \\
			& E(\lambda;S^{2q+1})\arrow[ur,"\psi"]
		\end{tikzcd}
		\]
		Since $j$ induces an injective map in mod $p$ homology (see \cite{Cohen} or \cite{Salvatore1}), the above commutative diagram shows that $i_*$ is injective as well.
	\end{proof}
	\begin{oss}
		If we replace $S^{2q+1}$ with $S^{2q}$ the proof written above does not work, indeed the map $i_*:H_*(C(\C;S^{2q});\F_p)\to H_*(C(\C^{\infty};S^{2q});\F_p)$ is not injective anymore, except when $p=2$. For details about the homology of labelled configuration spaces we refer to the work of F. Cohen \cite{Cohen}.
	\end{oss}

	\begin{thm}\label{thm: calcolo fiberqise configurations nel caso p=2}
		Let $q\geq 1$. Then
		\[
		H_*(E(\lambda;S^{2q});\F_2)\cong H_*(C(\C;S^{2q});\F_2)\otimes H_*(BS^1;\F_2)
		\]
	\end{thm}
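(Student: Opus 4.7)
The plan is to mimic the proof of Theorem \ref{thm:collasso sequenza spettrale per i labelled fiberwise config. spaces} verbatim, exploiting the fact that the only step that failed for $p$ odd and even-dimensional spheres — namely injectivity of $j_*$ in mod $p$ homology — does go through when $p=2$, as observed in the remark preceding the statement.

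First I would set up the Serre spectral sequence with $\F_2$ coefficients associated to the fibration
\[
C(\C;S^{2q})\hookrightarrow E(\lambda;S^{2q})\to \C P^{\infty}.
\]
Since $\C P^{\infty}$ is simply connected, there is no monodromy, and the $E^2$-page is
\[
E^2_{s,t}\cong H_s(\C P^{\infty};\F_2)\otimes H_t(C(\C;S^{2q});\F_2).
\]
As in the odd-sphere case, I claim that to conclude the statement it suffices to prove that the inclusion of the fiber $i_*\colon H_*(C(\C;S^{2q});\F_2)\to H_*(E(\lambda;S^{2q});\F_2)$ is injective; this forces all higher differentials out of the base column to vanish, and hence forces the whole spectral sequence to degenerate at $E^2$.

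Next I would reuse verbatim the auxiliary map
\begin{align*}
\psi:E(\lambda;S^{2q})&\to C(\C^{\infty};S^{2q})\\
[((v_1,l),\dots,(v_n,l))\times (p_1,\dots,p_n)]&\mapsto [(v_1,\dots,v_n)\times (p_1,\dots,p_n)]
\end{align*}
together with the inclusion $j\colon C(\C;S^{2q})\to C(\C^{\infty};S^{2q})$, obtaining the commutative triangle
\[
\begin{tikzcd}
C(\C;S^{2q})\arrow[r,"j"] \arrow[d,"i"] & C(\C^{\infty};S^{2q}) \\
E(\lambda;S^{2q})\arrow[ur,"\psi"']
\end{tikzcd}
\]
so that $j_*=\psi_*\circ i_*$. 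If $j_*$ is injective, then so is $i_*$, and the proof is finished.

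The only substantive input, and the step I would expect to be the real content, is therefore the injectivity of $j_*$ in mod $2$ homology of labelled configuration spaces with sphere labels. This is exactly the case excluded by the remark: when $p=2$ the sign representation is trivial, no cancellation occurs among the Dyer-Lashof-type generators of $H_*(C(\C^{\infty};S^{2q});\F_2)$, and $j_*$ remains injective. I would cite this from F.~Cohen's computation (\cite{Cohen}), which describes $H_*(C(\R^m;X);\F_2)$ as a free graded commutative algebra on iterated Browder brackets and Dyer--Lashof operations applied to a basis of $\widetilde{H}_*(X;\F_2)$, and shows that the stabilization map induced by $\R^2\hookrightarrow\R^{\infty}$ is a monomorphism in mod $2$ homology regardless of the parity of the dimension of the label sphere. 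Granting this, injectivity of $i_*$ follows formally from the factorization above, the Serre spectral sequence collapses at $E^2$, and we conclude
\[
H_*(E(\lambda;S^{2q});\F_2)\cong H_*(C(\C;S^{2q});\F_2)\otimes H_*(BS^1;\F_2)
\]
as $\F_2$-vector spaces.
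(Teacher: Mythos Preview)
Your proposal is correct and is exactly the paper's own argument: the paper's proof of this theorem literally reads ``Follow the proof of Theorem \ref{thm:collasso sequenza spettrale per i labelled fiberwise config. spaces}, just replace $2q+1$ with $2q$ and $p$ with $2$,'' which is precisely what you have spelled out in detail. The one substantive input you identify---injectivity of $j_*$ in mod $2$ homology regardless of the parity of the sphere---is exactly the point flagged in the remark and is what makes the argument go through.
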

	\begin{proof}
		Follow the proof of Theorem \ref{thm:collasso sequenza spettrale per i labelled fiberwise config. spaces}, just replace $2q+1$ with $2q$ and $p$ with $2$.
	\end{proof}	
	We end this section with a proof of Theorem \ref{thm: calcolo dell'omologia equivariante degli spazi di configurazione quando p divide n} based on labelled configuration spaces:
	\begin{corollario}\label{thm: collasso sequenza spettrale nel caso p=2}
		For any $n\in\N$ we have
		\[
		H_*^{S^1}(C_n(\C);\F_2)\cong H_*(C_n(\C);\F_2)\otimes H_*(BS^1;\F_2)
		\]
	\end{corollario}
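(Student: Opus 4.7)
The plan is to deduce the corollary from Theorem \ref{thm: calcolo fiberqise configurations nel caso p=2}, using Proposition \ref{prop: interpretazione come fiberwise configurations} and the stable splitting of Theorem \ref{thm:stable splitting} to translate $S^1$-equivariant information on $C_n(\C)$ into homology of pieces of a labelled fiberwise configuration space. The key observation is that for $p=2$ the sign representation coincides with the trivial representation, so Proposition \ref{prop: interpretazione come fiberwise configurations} applies equally to untwisted $\F_2$-coefficients.

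First I would fix any $q \geq 1$ and apply Proposition \ref{prop: interpretazione come fiberwise configurations} with $V = \F_2$ concentrated in degree $2q$. Combined with the identification $C_n(\C)_{S^1} \simeq B(B_n/Z(B_n))$ furnished by Lemma \ref{lem:equivalenza omotopica tra quozienti omotopici} and Proposition \ref{prop: quizienti omotopici come spazi classificanti}, this gives
\[
H_*^{S^1}(C_n(\C); \F_2) \cong \Tilde{H}_{*+2qn}(D_n(\lambda; S^{2q}); \F_2).
\]
The same argument applied to the trivial fiber bundle over a point yields the absolute version
\[
H_*(C_n(\C); \F_2) \cong \Tilde{H}_{*+2qn}(D_n^{\mathrm{abs}}(S^{2q}); \F_2),
\]
where $D_n^{\mathrm{abs}}(S^{2q})$ denotes the $n$-th subquotient of the natural filtration on $C(\C; S^{2q})$.

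Next I would combine the two instances of Theorem \ref{thm:stable splitting},
\[
\Tilde{H}_*(E(\lambda; S^{2q}); \F_2) \cong \bigoplus_{n \geq 1} \Tilde{H}_*(D_n(\lambda; S^{2q}); \F_2), \qquad \Tilde{H}_*(C(\C; S^{2q}); \F_2) \cong \bigoplus_{n \geq 1} \Tilde{H}_*(D_n^{\mathrm{abs}}(S^{2q}); \F_2),
\]
with the isomorphism of Theorem \ref{thm: calcolo fiberqise configurations nel caso p=2}. Matching $n$-th summands on both sides should give
\[
\Tilde{H}_*(D_n(\lambda; S^{2q}); \F_2) \cong \Tilde{H}_*(D_n^{\mathrm{abs}}(S^{2q}); \F_2) \otimes H_*(BS^1; \F_2),
\]
and substituting the two identifications above and shifting degrees down by $2qn$ would yield the statement.

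The main obstacle is justifying the summand-by-summand extraction, i.e., verifying that the isomorphism of Theorem \ref{thm: calcolo fiberqise configurations nel caso p=2} respects the filtrations by number of non-basepoint labels on both sides. This should follow from an inspection of that theorem's proof: the inclusion $i : C(\C; S^{2q}) \hookrightarrow E(\lambda; S^{2q})$ and the auxiliary map $\psi : E(\lambda; S^{2q}) \to C(\C^{\infty}; S^{2q})$ through which the injection $i_*$ is factored both preserve the number of points, so the Serre spectral sequence argument degenerates stage-by-stage in the filtration and the resulting splitting is filtration-preserving.
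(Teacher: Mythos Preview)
Your proposal is correct and follows essentially the same route as the paper: both combine Proposition \ref{prop: interpretazione come fiberwise configurations}, the stable splitting of Theorem \ref{thm:stable splitting}, and Theorem \ref{thm: calcolo fiberqise configurations nel caso p=2}. The only cosmetic difference is in the last step: where you argue abstractly that the isomorphism of Theorem \ref{thm: calcolo fiberqise configurations nel caso p=2} respects the number-of-points filtration (because the maps $i$ and $\psi$ do), the paper instead writes the answer explicitly as $\F_2[\iota,Q\iota,Q^2\iota,\dots]\otimes H_*(BS^1;\F_2)$ and reads off the weight-$n$ piece directly, which after the degree shift is $H_*(C_n(\C);\F_2)\otimes H_*(BS^1;\F_2)$.
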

	\begin{proof}
		By Proposition \ref{prop: interpretazione come fiberwise configurations} we have
		\[
		H_*^{S^1}(C_n(\C);\F_2)= H_{*+2qn}(D_n(\lambda;S^{2q});\F_2)
		\]
		By Theorem \ref{thm:stable splitting} we get
		\[
		\Tilde{H}_*(E(\lambda;S^{2q});\F_2)=\bigoplus_{n=1}^{\infty}\Tilde{H}_*(D_n(\lambda;S^{2q});\F_2)
		\]
		so $H_*^{S^1}(C_n(\C);\F_2)$ can be seen as a subspace of $H_*(E(\lambda;S^{2q});\F_2)$. Finally, the Theorem \ref{thm: calcolo fiberqise configurations nel caso p=2} tell us that 
		\[
		H_*(E(\lambda;S^{2q});\F_2)=\F_2[\iota,Q\iota,Q^2\iota,\dots]\otimes H_*(BS^1;\F_2)
		\]
		where $\iota\in H_{2q}(S^{2q});\F_2)$ is the fundamental class, and this is enough to get the statement.
	\end{proof}
	\subsection{Examples}\label{sec: examples}
	Let $p$ be an odd prime. We end this paper with the description of an additive basis of $H_*(B_n/Z(B_n);\F_p(\pm 1))$ and with some examples in the case $p=3$ and $n\leq 5$. Proposition \ref{prop: interpretazione come fiberwise configurations} and Theorem \ref{thm:stable splitting} tells us that $H_*(B_n/Z(B_n);\F_p(\pm 1))$ can be identified (up to a shift) with a subspace of $H_*(E(\lambda;S^{2q+1});\F_p)\cong H_*(C(\C;S^{2q+1});\F_p)\otimes H_*(BS^1;\F_p)$ (Theorem \ref{thm:collasso sequenza spettrale per i labelled fiberwise config. spaces}). In order to identify explicitly a basis of $H_*(B_n/Z(B_n);\F_p(\pm 1))$ inside $H_*(E(\lambda;S^{2q+1});\F_p)$ we need to introduce some notation. 		
	Recall that there is a filtration (see Section \ref{sec: fiberwise configuration space})
	\[
	\{\ast\}\subseteq C_1(\C;X)\subseteq C_2(\C;X)\subseteq\dots\subseteq C(\C;X)
	\]
	of the labelled configuration space $C(\C;X)$, where $C_k(\lambda;X)$ is the subspace 
	\[
	C_k(\C;X)\coloneqq \bigsqcup_{n=0}^{k}F_n(\C)\times_{\Sigma_n}X^n/\sim 
	\]
	We will denote the quotient $C_k(\C;X)/C_{k-1}(\C;X)$ by $D_k(\C;X)$. By Theorem \ref{thm: omologia spazi di configurazione etichettati} $H_*(C(\C;S^{2q+1});\F_p)$ is the free graded commutative algebra on classes $Q^i(x)$ and $\beta Q^i(x)$, where $x$ is a basic bracket of odd degree. In this case the only basic bracket is the fundamental class $x\in H_{2q+1}(S^{2q+1})$, indeed $[x,x]=0$ since $x$ is a odd degree variable. The decomposition
	\[
	\Tilde{H}_*(C(\C;S^{2q+1}))\cong\bigoplus_{k=1}^{\infty}\Tilde{H}_*(D_k(\C;S^{2q+1}))
	\]
	induced by the filtration of $C(\C;S^{2q+1})$ is encoded by a weight grading $w$, which is generated by the following rules:
	\begin{itemize}
		\item $w(1)=0$
		\item $w(x)=1$
		\item $w(ab)=w(a)+w(b)$
		\item $w(Q(a))=w(\beta Q(a))=pw(a)$
	\end{itemize} 
	The subspace $\Tilde{H}_*(D_k(\C;S^{2q+1});\F_p)$ is generated by the monomials of weight $k$. The following Theorem identifies explicitly $H_*(B_n/Z(B_n);\F_p(\pm 1))$ as a subspace of $H_*(E(\lambda,S^{2q+1});\F_p)$ by describing an additive basis. 
	\begin{thm}\label{thm: identificazione esplicita dell'omologia twistata}
		Let $p$ be an odd prime, $n\geq 2$.
		\[                                                  
		s^{(2q+1)n}H_*(B_n/Z(B_n);\F_p(\pm 1))\cong \Tilde{H}_{*}(D_n(\C;S^{2q+1});\F_p)\otimes \F_p[c]
		\]
		where $c$ is a variable of degree $2$. Therefore $s^{(2q+1)n}H_*(B_n/Z(B_n);\F_p(\pm 1))$ is generated by elements of the form
		\[
		a\otimes c^i \qquad i\geq 0
		\] 
		where $a\in H_*(C(\C;S^{2q+1});\F_p)$ is a monomial of weight $n$. 
	\end{thm}
	\begin{proof}
		Combining Proposition \ref{prop: interpretazione come fiberwise configurations} and Theorem \ref{thm:stable splitting} we get that
		\[
		s^{(2q+1)n}H_*(B_n/Z(B_n);\F_p(\pm 1))\cong \Tilde{H}_{*}(D_n(\lambda;S^{2q+1});\F_p)\subseteq \Tilde{H}_*(E(\lambda;S^{2q+1});\F_p)
		\]
		By Theorem \ref{thm:collasso sequenza spettrale per i labelled fiberwise config. spaces} we have
		\[
		H_*(E(\lambda;S^{2q+1});\F_p)\cong H_*(C(\C;S^{2q+1});\F_p)\otimes H_*(BS^1;\F_p)
		\]
		and this isomorphism is compatible with the filtrations of $E(\lambda;S^{2q+1})$ and $C(\C;S^{2q+1})$. Therefore $\Tilde{H}_{*}(D_n(\lambda;S^{2q+1});\F_p)$ corresponds to $\Tilde{H}_{*}(D_n(\C;S^{2q+1});\F_p)\otimes H_*(BS^1;\F_p)$, and this proves the statement. 
	\end{proof}
	We end this paper with some computations in the case $p=3$ and $n\leq 5$. From now on we fix $q=0$: in this case the weights and degrees of the generators $Q^i(x)$, $\beta Q^i(x)$ are summarized in this table:
	\[
	\begin{tabular}{ccc}
		
		\toprule
		Homology class  & Weight & Degree\\
		\midrule
		$x$ & $1$ & $0$ \\
		$Q^i(x)$ & $p^i$ & $2p^i-1$ \\
		$\beta Q^i(x)$ & $p^i$ & $2p^i-2$ \\
		\bottomrule
	\end{tabular}
	\]
	\begin{description}
		\item[$n=2$:] By Theorem \ref{thm: identificazione esplicita dell'omologia twistata} we have 
		\[
		s^2H_*(B_2/Z(B_2);\F_3(\pm 1))\cong \Tilde{H}_{*}(D_2(\C;S^1);\F_3)\otimes \F_3[c]
		\]
		Since there are no classes of weight $2$ inside $H_*(C(\C;S^1);\F_3)$ we get that 
		\[
			H_*(B_2/Z(B_2);\F_3(\pm 1))=0
		\]
		\item[$n=3$:] By Theorem \ref{thm: identificazione esplicita dell'omologia twistata} we have 
	\[
	s^3H_*(B_3/Z(B_3);\F_3(\pm 1))\cong \Tilde{H}_{*}(D_3(\C;S^1);\F_3)\otimes \F_3[c]
	\]
The only classes of weight $3$ inside $\Tilde{H}_*(C(\C;S^1);\F_3)$ are $Q(x)$ and $\beta Q(x)$. Therefore $s^3H_*(B_3/Z(B_3);\F_3(\pm 1))$ is generated as $\F_3$ vector space by classes 
\[
Q(x)\otimes c^i \qquad \beta Q(x)\otimes c^i\qquad i\geq 0
\]
where $c$ has degree two, $\beta Q(x)$ has degree four and $Q(x)$ has degree five. In other words,
\[
H_*(B_3/Z(B_3);\F_3(\pm 1))=\begin{cases}
	\F_3 \text{ if } *\geq 1\\
	0 \text{ if } *=0
\end{cases}
\] 
	\item[$n=4$:] By Theorem \ref{thm: identificazione esplicita dell'omologia twistata} we have 
\[
s^4H_*(B_4/Z(B_4);\F_3(\pm 1))\cong \Tilde{H}_{*}(D_4(\C;S^1);\F_3)\otimes \F_3[c]
\]
The only classes of weight $4$ inside $\Tilde{H}_*(C(\C;S^1);\F_3)$ are $xQ(x)$ and $x\beta Q(x)$. Therefore $s^4H_*(B_4/Z(B_4);\F_3(\pm 1))$ is generated as $\F_3$ vector space by classes 
\[
xQ(x)\otimes c^i \qquad x\beta Q(x)\otimes c^i\qquad i\geq 0
\]
where $c$ has degree two, $x\beta Q(x)$ has degree five and $xQ(x)$ has degree six. In other words,
\[
H_*(B_4/Z(B_4);\F_3(\pm 1))=\begin{cases}
	\F_3 \text{ if } *\geq 1\\
	0 \text{ if } *=0
\end{cases}
\] 
	\item[$n=5$:] By Theorem \ref{thm: identificazione esplicita dell'omologia twistata} we have 
\[
s^5H_*(B_5/Z(B_5);\F_3(\pm 1))\cong \Tilde{H}_{*}(D_5(\C;S^1);\F_3)\otimes \F_3[c]
\]
Since there are no classes of weight $5$ inside $H_*(C(\C;S^1);\F_3)$ we get that 
\[
H_*(B_5/Z(B_5);\F_3(\pm 1))=0
\]
	\end{description}

	\printbibliography
	
\end{document}